 \DeclareMathOperator{\spt}{spt}
\DeclareMathOperator{\esssup}{ess\ sup}
\newcommand{\Lam}{\Lambda} 
\newcommand{\Om}{\Omega} \newcommand{\tht}{\theta}
\newcommand{\Tht}{\Theta} \newcommand{\al}{\alpha}
\newcommand{\eps}{\epsilon} \newcommand{\be}{\beta}
 \newcommand{\De}{\Delta}
\newcommand{\de}{\delta} \newcommand{\s}{\sigma}
 \newcommand{\gam}{\gamma}
\newcommand{\Gam}{\Gamma} \newcommand{\kap}{\kappa}
\newcommand{\lam}{\lambda} 
\newcommand{\Ph}{\Phi} 
\newcommand{\z}{\zeta}
\def\eric{\color[rgb]{0,0,0}}
\def\mystara{{\eric \mathcal{S}}}
\def\mystarb{{\eric \widetilde{\mathcal{S}}}}
\def\I{{\eric \mathcal{I}}}
\def\J{{\eric \mathcal{J}}}
\def\K{{\eric \mathcal{K}}}
\def\vincent{\color[rgb]{0,0,0}}
\newcommand{\ind}{{\eric\mathbb{I}}}
 \newcommand{\R}{\mathbb{R}}
\newcommand{\T}{\mathbb{T}} \newcommand{\Z}{\mathbb{Z}}
 \newcommand{\A}{\mathcal{A}}
\newcommand{\del}{\nabla} \newcommand{\bdy}{\partial}
 \newcommand{\Ri}{\mathcal{R}}
 \newcommand{\til}[1]{\widetilde{#1}}
 \newcommand{\smod}{\setminus}
 \newcommand{\sub}{\subset}
\newcommand{\ls}{\lesssim}
\newcommand{\goesto}{\rightarrow}
\newcommand{\imb}{\hookrightarrow}
\newcommand{\Hper}{H_{{per}}^\s}
\newcommand{\Hdper}{\dot{H}_{{per}}^\s}
\newcommand{\Sob}[2]{\lVert#1\rVert_{#2}}
\newcommand{\abs}[1]{\lvert#1\rvert}
\newcommand{\lb}{\langle} \newcommand{\rb}{\rangle}
\newcommand{\req}[1]{(\ref{#1})}
 \newcommand{\ek}{\eta^{(k)}}
\newcommand{\vk}{v^{(k)}}
\newcommand{\FLp}{F_{L^p}} \newcommand{\FHg}{F_{H^{-\gam/2}}}
 \newcommand{\TLt}{\Tht_{L^2}}
\newcommand{\TLp}{\Tht_{L^p}} \newcommand{\THs}{\Tht_{H^\s}}
\newtheorem{thm}{Theorem} 
\newtheorem{prop}{Proposition}[subsection]
\newtheorem{lem}{Lemma}[subsection]
\newtheorem{rmk}{Remark}[section]
\newtheorem{coro}[lem]{Corollary}
\newtheorem*{stand}{Standing Hypotheses}
\numberwithin{equation}{section}
\title[data assimilation using blurred-in-time observations]
	{Continuous data assimilation with
	blurred-in-time measurements
    of the surface quasi-geostrophic equation}
\author{Michael S. Jolly$^{1}$} \address{$^{1}$Department of
  Mathematics\\ Indiana University\\ Bloomington, IN 47405}
\email[M.S. Jolly]{msjolly@indiana.edu}
\author{Vincent R. Martinez$^{2}$} \address{$^{2}$Department of
  Mathematics and Statistics\\ CUNY-Hunter College\\ New York, NY 10065}
\email[V. R. Martinez]{vrmartinez@hunter.cuny.edu}
\author{Eric J. Olson$^{3}$} \address{$^{3}$Department of Mathematics
  and Statistics\\ University of Nevada-Reno\\ Reno, NV 89557}
\email[E.J. Olson]{ejolson@unr.edu}
\author{Edriss S. Titi$^{4, \dagger}$} \address{$^4$ Department of Mathematics
  \\ Texas A\& M University \\ College Station, TX
  77843-3368\\ Also:\\ Department of Computer Science and Applied
  Mathematics\\ Weizmann Institute of Science\\ Rehovot, 76100,
  Israel} \email[E.S. Titi]{titi@math.tamu.edu  $\dagger$}
\thanks{$\dagger$ Denotes corresponding author}
\begin{document}
\date{August 31, 2018}

\begin{abstract}
{An intrinsic property of almost any physical measuring device is
that it makes observations which are slightly blurred in time.
We consider a nudging-based approach for data assimilation that constructs an
approximate solution based on a feedback control mechanism that is designed to
account for observations that have been blurred by a moving
time average.
Analysis of this nudging model in the context of the subcritical
surface quasi-geostrophic equation shows,
provided the time-averaging window is sufficiently small and
the resolution of the observations sufficiently fine,
that the approximating solution converges exponentially fast
to the observed solution over time.
In particular, we demonstrate that observational data with a small blur in time
possess no significant obstructions to data assimilation provided that
the nudging properly takes the time averaging into account.
Two key ingredients in our analysis are additional boundedness properties for the relevant interpolant observation operators and a non-local Gronwall
inequality.
}
\end{abstract}

\thanks{Keywords: data assimilation, nudging, time-averaged observables, surface quasi-geostrophic equation}

\thanks{2010 MSC: 35Q35, 35Q86, 35Q93, 37B55, 74H40, 93B52}

\maketitle

\centerline{\it Dedicated to Professor Andrew Majda on the occasion of his 70th birthday}

\section{Introduction}

The surface quasi-geostrophic (SQG) equation models the dynamics
of the potential temperature on the two-dimensional horizontal
boundaries of the three-dimensional quasi-geostrophic equations,
which, in turn, are approximations to the shallow water equations
in the limit of small Rossby number where the inertial forces are
an order of magnitude smaller than the Coriolis and pressure forces.
This is the regime of strong rotation, where
the time scales associated with atmospheric flow over long distances
are much larger than the time scales associated with the Earth's
rotation (cf. \cite{pedlosky}).
{\vincent{The model of focus in our study of data assimilation
is the {subcritically} dissipative SQG equation subject to periodic boundary conditions over the fundamental domain $\T^2=[-\pi,\pi]^2$. In \textit{non-dimensionalized} variables, it is given by}}
\begin{align}\label{sqg}
		\begin{cases}
			\bdy_t\tht+\kap\Lam^\gam\tht+u\cdotp{\del}\tht=f,\quad
                        \\ u=\Ri^\perp\tht, \quad
                        \tht(x,0)=\tht_0(x),
		\end{cases}
	\end{align}
{\vincent{where $\Lam^\gam=(-\De)^{\gam/2}$ corresponds to the Fourier muliplier operator $|k|^\gam$,
$\Ri^\perp=(-R_2,R_1)$ is the perpendicular Riesz transform, where each $R_j$ corresponds to
$(-ik_j/|{\mathbf{k}}|)_{\mathbf{k}\in\Z^2\smod\{0\}}$, and the strength of dissipation satisfies $1<\gam\leq2$.}}
Note that $\gamma=1$ gives the
critical case while $0<\gamma<1$ gives the supercritical case.
The scalar function $\tht$ represents the surface
temperature or buoyancy of a fluid advected along the vector velocity
field $u$.
The parameter
$\kap$ is a fixed positive quantity,
which appears due to the phenomenon of Ekman pumping at the
surface.
{\vincent{Note, also, that if $\tht_0$ has zero mean over $\T^2$, then the property $\frac{1}{4\pi^2}\int_{\T^2}\tht(t)dx=0$ is propagated for all $t>0$, so long as $f$ has zero mean over $\T^2$ as well.}}


Since their
introduction into the mathematical community by Constantin, Majda, and
Tabak \cite{cmt}, the subcritical, critical and supercritical SQG equations
have been thoroughly studied.
Well-posedness and global regularity
in various function spaces has been resolved in all but the
supercritical case, (cf. \cite{caff:vass, const:vic, const:wu:qgwksol,
ctv, cz:vic, hdong, kis:naz, kis:naz:vol, resnick}), and also for certain inviscid regularizations (cf. \cite{khouider:titi}).
The long-time behavior in the subcritical and critical has been studied as well and in particular, a global attractor theory has been established for them
(cf. \cite{carrillo:ferreira, chesk:dai:qgattract, const:coti:vic,
  ctv, coti:zelati, ju:qgattract}).
These equations have been used to simulate the production of
fronts in geophysical flows and in spite of being a scalar model
in two dimensions, possess solutions that behave in ways that are
strikingly similar to fully three-dimensional flows.
Therefore, equations (\ref{sqg}) provide a physically-relevant
dynamical context in which to analyze the performance of our model for
data assimilation, that also supplies additional analytical difficulties
that requires us to further develop the theoretical foundations of our approach.

Given a geophysical equation that describes some aspect of
reality, the ability to predict the future using
this equation requires an initial condition that accurately
represents the current physical state.
Although weather data has been collected nearly continuously
in time since the 1960s, this data represents, at best, an
incomplete picture of the current state of the atmosphere.
Thus, rather than an exact initial condition, in practice one has
a time series of low-resolution observations.
Moreover, due to the nature of the measuring devices, the
data itself may contain noise as well as systematic errors.
Of particular interest to our present study is the fact
that nearly all physical instrumentation
produces measurements which are \textit{manifestly blurred in time}.
{\eric For example,
the heat capacity of a thermometer naturally averages
temperatures as they change over time while
the rotational inertial of
an anemometer similarly averages velocities.
Time averages in satellite images result from finite shutter speeds and
further averages result when satellite data is obtained by comparing
two subsequent images.}
Blocher \cite{blocher} shows both
analytically and computationally that noisy, blurred-in-time
observations of the $X$ variable can be used to synchronize
two copies of the {\vincent{three-dimensional Lorenz system of ordinary differential equations (ODEs)}} up to a factor of the
variance of the noise, {\eric see also \cite{blocher18}.}
As the analysis of the SQG equation is more complicated,
we do not consider noise or systematic
errors in this work, {\vincent{as this was studied in \cite{bessaih:olson:titi} and \cite{foias:mondaini:titi}}}, but instead focus solely on how to
assimilate data that has been subject to a moving time average.

The idea of finding the current
physical state by combining a time-series of partial
observations with knowledge about the dynamics dates back
to a 1969 paper of Charney, Halem, and Jastrow \cite{chj}.
Doing this optimally is the subject of data assimilation.
Data assimilation has received considerable attention
in both its theoretical development and practical use
for the prediction of the weather (cf. Kalnay \cite{kalnay}
and references therein).
The approach of
interest in this article computes an approximation using a
``auxiliary system" obtained by taking the original model,
which is assumed to coincide with the observations
in the absence of measurement error, and applying feedback control
{based on the observations}.
This feedback control serves to nudge the solution towards
the unknown but observed solution no matter what original
initial condition was chosen for it.
In theory, one could then integrate the approximate solution
forward in time to obtain a good approximation of the current
physical state.  This approximation would then serve as an initial condition
for subsequent forecasts.

{{The auxiliary system described above was first proposed as an approach to
data assimilation for the model problem of the
two-dimensional incompressible Navier--Stokes equations by
Azouani, Olson, and Titi in~\cite{azouani:olson:titi}.
In that work, exponential convergence of the approximating
solution to the observed solution was shown under general
conditions in which the observations were assumed to be
taken continuously and instantaneously in time.
By now this approach has been studied for
several other physical systems such as the one-dimensional
Chaffee-Infante equation,
the two-dimensional Boussinesq,
the three-dimensional Brinkman-Forchheimer extended Darcy equations,
the three-dimensional B\'enard convection in porous media, and the three-dimensional Navier--Stokes $\alpha$-model
(cf. \cite{alb:lopes:titi, azouani:titi, azouani:olson:titi,
farhat:jolly:titi, farhat:lunasin:titi1, farhat:lunasin:titi2, mark:titi:trab}). Notably, Farhat, Lunasin, and Titi in \cite{farhat:lunasin:titi3}, recently verified, in the case of the three-dimensional planetary geostrophic model,
an earlier conjecture of Charney that posited that in simple atmospheric models, the temperature history determines all other
state variables.
The effects of noisy data were studied
by Bloemker, Law, Stuart, and Zygalakis \cite{blsz}
and Bessaih, Olson and Titi \cite{bessaih:olson:titi}. {\vincent{A case related to the study undertaken by this paper}}, where observations are taken at discrete moments in time, rather than continuously, {\vincent{and with systematic deterministic errors}}, was studied in \cite{foias:mondaini:titi}, while fully discretized versions were considered in \cite{ibdah:mondaini:titi}.  Postprocessing methods were also applied to further ameliorate errors in this downscaling algorithm and in particular, obtain error bounds which are uniform-in-time (cf. \cite{mondaini:titi}).
See also \cite{bergemann:reich} for a study into the
continuous-time extended Kalman-Bucy filter in the setting of stochastic nonlinear ODEs.
Observational
measurements that have been blurred in time are studied here.}}


 In continuation of the work in \cite{jmt}, we
combine a feedback control based on time-averaged modal
observables with the dynamics of the $2\pi$-periodic
subcritical SQG equation to obtain
	\begin{align}\label{fb:sqg:prelim}
		\begin{cases}
			\bdy_t\eta+\kap\Lam^\gam\eta+v\cdotp{\del}\eta
	=f-\mu
\big(
		{J}_h^{\de}(\eta)
		-{J}_h^{\de}(\tht)
\big),
			\\ v=\Ri^\perp\eta, \quad
                        \eta(x,t)\big{|}_{t\in(-2\de, 0]}=g(x,t).
		\end{cases}
	\end{align}
Here $\mu$ is a relaxation parameter, $J_h^\de(\theta)$
represents an idealized interpolant based on
modal measurements with {\vincent{observation resolution $h$}}
{\eric along with a moving time average}
over intervals of
width $\delta$ that
{\eric represents the blur}
intrinsic to the measuring device used to obtain the data.
It is natural to suppose that the observed solution, $\theta$,
represents the long-time evolution of the SQG equations,
which is to say that $\tht$ belongs to the global attractor
and therefore exists backward in time for all $t<0$.
For our analysis, however, it is sufficient to go back
only as far as $t=-2\delta$.  We therefore make the milder
assumption that
$\theta(\cdot,-2\delta)$ {\vincent{belongs to an absorbing ball for \req{sqg} with a sufficient regularity}}.
Note also that in order to construct the data assimilation
algorithm given by (\ref{fb:sqg:prelim}), we have assumed that
{\eric the SQG equation}
is known in addition to the
exact value of $\kappa$.
What is not known, of course, is the initial condition
for $\eta$ represented by the function $g(x,t)$.
Theoretically speaking one might as well
take $g(x,t)=0$; however, any $2\pi$-periodic function with
with mean zero that lies in {\vincent{the aforementioned absorbing set}} would be fine.
Therefore, there may be better choices for $g$ in practice.
In particular,
if we take $g(x,t)=\theta(x,t)$ for $t\in(-2\delta,0]$,
then $J_h^\de(\eta)=J_h^\de(\tht)$ in \req{fb:sqg:prelim},
so that $\eta(x,t)=\theta(x,t)$, for all $t>0$; {\vincent{we refer the reader to Section \ref{sect:pf1} to help clarify this fact}}.
Although there would be no need for data
assimilation if $\theta(x,t)$ were already known,
this cancellation is necessary {\eric to obtain the
important mathematical property that, in the absence of noise
or model error,} $\eta$
{\eric exactly} synchronizes with $\theta$ over time.


We will assume that {\eric equation \req{fb:sqg:prelim}
governs} the evolution of the
approximating solution, $\eta$, used in our analysis of data
assimilation for the SQG equation with observations that have
been blurred in time and with
$2\pi$-periodic boundary conditions over $\T^2$.
We will treat the subcritical case, when $\gam\in(1,2)$.
Our main results consist of the following two theorems:
\begin{enumerate}
	\item The data assimilation equations given by \req{fb:sqg:prelim} are well posed (Theorem \ref{thm1});
\smallskip
	\item For $h$ sufficiently small, there exists a choice of $\mu$ and $\delta$, for which the differences between $\eta$ and $\tht$ vanish over time (Theorem \ref{thm2}).
\end{enumerate}
Note that treating the critical case $\gamma=1$ would, of course, also be
very interesting for any type of {\eric observational data.
However, this is beyond the scope of our present analysis.}

We
{\eric defer formal}
statements of our theorems
{\eric to Section \ref{sect:state}, after}
we have defined
{\eric the mathematical setting of our problem}
in Section \ref{sect:prelim}.
Let us point out, however, that the presence of the moving time average
introduces certain analytical difficulties.
Firstly, it is difficult to control temporal oscillations in the
approximating solution that arise due to
deviations of the blurred-in-time observations
from the exact values of the reference solution.  For this, we must especially make use of more
delicate boundedness properties of the interpolant operator, which we identify and prove in Section \ref{sect:inter} and Appendix \ref{appC}, respectively.
Second, a suitable non-local Gronwall inequality is required
to control the difference between the approximating solution
the observed solution.
Theorem \ref{thm2} shows that these obstacles can indeed be surmounted
provided that $\delta$ is small enough.  In this regime, (\ref{fb:sqg:prelim}) achieves exact
asymptotic synchronization at an exponential rate and
therefore performs similarly to the case studied in \cite{jmt},
where the observations are not blurred in time.
Lastly, we emphasize that our approach to the analysis of this problem renders transparent
which errors arise from the delay and which arise from the blurring,
as well as the manner in which these errors transfer from one time-window to the next.
Because of this, we are able to capture {\eric mathematically}
the role of the size of the averaging window.


\section{Preliminaries}\label{sect:prelim}

\subsection{Function spaces: $L^p_{per}$, $V_\s$, $H_{{per}}^\s$, $\dot{H}_{{per}}^\s$, $C^\infty_{per}$}\label{hs:def}

Let $1\leq p\leq\infty$, $\s\in \R$ and $\T^2=\R^2/(2\pi\Z)=[-\pi,\pi]^2$.
Let $\mathcal{M}$ denote the set of real-valued Lebesgue measurable
functions over $\T^2$.  Since we will be working with periodic functions,
define
	\begin{align}\notag
		\mathcal{M}_{per}:=\{\phi\in\mathcal{M}:\phi(x,y)=\phi(x+2\pi,y)=\phi(x,y+2\pi)=\phi(x+2\pi,y+2\pi)\ \text{a.e.}\}.
	\end{align}
Let
{\eric $C^\infty(\R^2)$ be}
the class of functions which are infinitely
differentiable
{\eric on $\R^2$.}
Define $C^\infty_{per}(\T^2)$ by
	\begin{align}\notag
		C^\infty_{per}(\T^2):=
{\eric C^\infty(\R^2)}
\cap\mathcal{M}_{per}.
	\end{align}
For $1\leq p\leq\infty$, define the periodic Lebesgue spaces by
	\begin{align}\notag
		L^p_{per}(\T^2):=\{\phi\in\mathcal{M}_{per}: \Sob{\phi}{L^p}<\infty\},
	\end{align}
where
	\begin{align}\notag
		\Sob{\phi}{L^p}:=\left(\int_{\T^2} |\phi(x)|^p\ dx\right)^{1/p},\quad 1\leq p<\infty,\quad\text{and}\quad \Sob{\phi}{L^\infty}:=\ \stackrel[x\in\T^2]{}{\esssup}|\phi(x)|.
	\end{align}
Let us also define
	\begin{align}\label{Z:space}
		\mathcal{Z}:=\{\phi\in L^1_{per}:\int_{\T^2} \phi(x)\ dx=0\}.
	\end{align}

{\eric For $\phi\in L_{\it per}^1(\T^2)$ let}
$\hat{\phi}(\mathbf{k})$ denote the Fourier coefficient of $\phi$ at wave-number $\mathbf{k}\in\Z^2$, i.e.,
	\begin{align}\notag
		\hat{\phi}(\mathbf{k}):=\frac{1}{4\pi^2}\int_{\T^2}e^{-i\mathbf{k}\cdotp x}\phi(x)\ dx.
	\end{align}
For any real number $\s\geq0$, define the homogeneous Sobolev space, $\Hdper(\T^2)$, by
	\begin{align}\label{Hdper}
		\Hdper(\T^2):=\{\phi\in L^2_{per}(\T^2)\cap\mathcal{Z}:\quad \Sob{\phi}{\dot{H}^\s}<\infty\},
	\end{align}
where
	\begin{align}\label{hdper:norm}
				\Sob{\phi}{\dot{H}^\s}^2:=4\pi^2\sum_{\mathbf{k}\in\Z^2\smod\{\mathbf{0}\}}\abs{\mathbf{k}}^{2\s}|\hat{\phi}(\mathbf{k})|^2.
	\end{align}
{\eric Similarly, for}
$\s\geq0$, we define the inhomogeneous Sobolev space, $\Hper(\T^2)$, by
	\begin{align}\label{Hper}
		\Hper(\T^2):=\{\phi\in L^2_{per}(\T^2):\Sob{\phi}{H^\s}<\infty\},
	\end{align}
where
	\begin{align}\label{hper:norm}
		\Sob{\phi}{{H}^\s}^2:=4\pi^2\sum_{\mathbf{k}\in \Z^2}(1+\abs{\mathbf{k}}^2)^{\s}|\hat{\phi}(\mathbf{k})|^2.
	\end{align}

Let $\mathcal{V}_0\sub\mathcal{Z}$ denote the set of trigonometric
polynomials with mean zero over $\T^2$ and set
	\begin{align}\label{hper}
		V_\s:=\overline{\mathcal{V}_0}^{H^\s},
	\end{align}
where the closure is taken with respect to the norm given by \req{hper:norm}.
Observe that the mean-zero condition can be equivalently
stated as $\hat{\phi}(\mathbf{0})=0$.  Thus, $\Sob{\cdotp}{\dot{H}^\s}$ and $\Sob{\cdotp}{H^\s}$
are equivalent as norms over $V_\s$.  Moreover, by  Plancherel's theorem we have
	\begin{align}\notag
		\Sob{\phi}{\dot{H}^\s}=\Sob{\Lam^\s \phi}{L^2}.
	\end{align}
Finally, for $\s\geq0$, we identify $V_{-\s}$ as the dual space, $(V_\s)'$, of $V_\s$,
which can be characterized as the space of all bounded linear functionals, $\psi$, on $V_\s$
{\eric represented by the Fourier coefficients $\hat\psi({\bf k})$
with duality paring
$$
	\langle \psi,\phi\rangle =
		4\pi^2 \sum_{{\bf k}\in \Z^2\setminus{\bf 0}}
		{\overline{\hat\psi({\bf k})}}\cdot\hat\phi({\bf k})
\quad\hbox{such that}\quad
		\Sob{\psi}{\dot{H}^{-\s}}
		= 4\pi^2 \sum_{{\bf k}\in \Z^2\setminus{\bf 0}}
			|{\bf k}|^{-2\sigma}\big|\hat\phi({\bf k})\big|
		<\infty.
$$
}%
{\eric Given our use of non-dimensional variables and
the $2\pi$ spatial periodicity of our functions,
the Poincar\'e inequality may be written
with a non-dimensional constant equal to one as
\begin{equation}\label{pineq}
	\|\phi\|_{\dot H^{\sigma'}} \le
	\|\phi\|_{\dot H^{\sigma}}
\qquad\hbox{for}\qquad \sigma'\le\sigma.
\end{equation}
Moreover,} we have the following continuous embeddings
	\[
		V_\s\imb V_{\s'}\imb V_0\imb V_{-\s'}\imb V_{-\s}
{\eric \qquad\hbox{when}\qquad} 0\leq\s'\leq\s.
	\]

\begin{rmk}
Since we will be working over $V_\s$ and $\Sob{\cdotp}{\dot{H}^\s}$, $\Sob{\cdotp}{H^\s}$
determine equivalent norms over $V_\s$,
we will often denote $\Sob{\cdotp}{\dot{H}^\s}$
simply by $\Sob{\cdotp}{H^\s}$ for convenience.  Similarly, we will often abuse
notation and denote ${L} ^p_{per}({\T^2})$ simply by $L^p$.
\end{rmk}

\subsection{General Interpolant Observables}\label{sect:inter}
We will consider general interpolant observables,
$J_h$, which are defined as those which satisfy certain boundedness
and approximation-of-identity properties.
The canonical examples of such observables include projection
onto local spatial averages or projection onto
finitely many Fourier modes.
It was shown in \cite{jmt}
that such projections do in fact satisfy the properties we impose
{on} $J_h$.

Let $0<h<\pi/3$ and $1\leq q\leq p\leq\infty$.  Let $J_h:{L}^p(\T^2)\goesto {L}^p(\T^2)$ be a linear operator satisfying
	\begin{align}
		\sup_{h>0}\Sob{J_h\phi}{L^p}&\leq C\Sob{\phi}{L^p}, \quad\quad\quad\quad\quad\quad\quad\hspace{9pt}\label{Ih:base0}\\
		\Sob{J_h\phi}{L^p}&\leq Ch^{2(1/p-1/q)}\Sob{\phi}{L^q}\label{Ih:base1},
	\end{align}
where $C>0$ represents a constant independent of $\phi, h$.
{\eric Note that $1/p-1/q< 0$ when $q<p$
in which case the bound in (\ref{Ih:base1}) gets worse as $h$ becomes smaller.}
In addition to
{\eric \req{Ih:base0} and \req{Ih:base1},}
we will also suppose that $J_h$ satisfies the
following approximation-of-identity properties
	\begin{align}
		\Sob{\phi-J_h\phi}{L^2}\leq Ch^{\be}\Sob{\phi}{\dot{H}^\be},\quad \text{and}\quad
		\Sob{\phi-J_h\phi}{\dot{H}^{-\be}}\leq Ch^{\be}\Sob{\phi}{L^2},\quad \be\in(0,1].\label{TI}
	\end{align}
	
We will also require $J_h$ to satisfy {\eric some}
boundedness properties.
We verify in Appendix~\ref{appC} that
{\eric these properties}
hold for local spatial
averages.  They {\eric also hold} for spectral projection, {\eric that is}, projection
onto finitely many {\eric lowest} Fourier modes (see Remark \ref{rmk:LPproj}). To state
{\eric these boundedness properties}, we will adopt the following notation.
{\eric For $\beta_1$ and $\beta_2$ non-negative integers
we let $D^\be:=\bdy_1^{\be_1}\bdy_2^{\be_2}$ where $\be_1+\be_2=\be$,
while if $\beta_j\ge 0$ are real
then $D^\be:=\bdy_1^{\lfloor\be_1\rfloor}
	\bdy_2^{\lfloor\be_2\rfloor}\Lam^{\be
		-\lfloor\be_1\rfloor-\lfloor\be_2\rfloor}$.
Here $\lfloor\beta\rfloor$
represents the greatest integer less or equal $\beta$.}
Finally, if $\be\in(-2,0)$, then
$D^\be:=\Lam^{\be}$, i.e., the Riesz potential.

Now, given $\al\geq1$, let $\eps(\al)$ be as in Proposition \ref{prop:pou} $(v)$ when $\al\in[1,2)$ and identically $0$ otherwise.
{\eric Let $C_\alpha>0$}
be a sufficiently large constant, depending possibly on $\al$, and define
	\begin{align}\label{typeI:const11}
		C_I(\al,h):=\begin{cases}
{\eric C_\alpha}
		\left(\frac{2\pi}{h}\right),& \al<1,\\
{\eric C_\alpha}
		\left(\frac{2\pi}{h}\right)^{2+|\al|-\eps(\al)},&\al\geq1{\eric .}
				\end{cases}
	\end{align}
We assume that
	\begin{align}
	\Sob{J_h\phi}{\dot{H}^\rho(\T^2)}&\leq C_I(\be,h)
				h^{-(\rho-\be)}\Sob{\phi}{\dot{H}^{\be}(\T^2)},
	\quad (\rho,\be)\in[0,\infty)\times[0,2)
,\label{TI:3}\\
		\Sob{J_h\phi}{\dot{H}^\rho (\T^2)}&\leq Ch^{-\rho}(h^{\be}\Sob{\phi}{\dot{H}^\be}+\Sob{\phi}{L^2(\T^2)}),
	\quad(\rho,\be)\in[0,\infty)\times(-2,0]\label{TI:4}
,\\
		\Sob{J_h\phi}{\dot{H}^\rho(\T^2)}&\leq C_I(|\rho|,h)h^{-(\rho-\be)}\Sob{\phi}{\dot{H}^{\be}(\T^2)},\quad (\rho,\be)\in(-2,0)\times(-\infty,0],\label{TI:5}\\
		\Sob{J_hD^\be\phi}{\dot{H}^\rho(\T^2)}&\leq C_I(|\rho|,h)h^{-(\rho+\be-\be')}\Sob{\phi}{\dot{H}^{\be'}(\T^2)},\notag\\
	&\qquad\qquad\qquad\qquad
	(\rho,\be,\be')\in(-2,0)\times(-2,\infty)\times(-\infty,\be],\label{TI:8}\\
		\Sob{J_hD^\ell\phi}{\dot{H}^\rho(\T^2)}&\leq C_I(|\rho|,h)h^{-1-\rho-\ell}\Sob{\phi}{L^1(\T^2)},\quad (\rho,\ell)\in(-2,0)\times\Z_{\geq0},\label{TI:9}.
	\end{align}
	
{\eric We again emphasize that the above} properties
are consistent with those satisfied by
the projection onto local spatial averages (see \req{vol:elts} and
\req{vol:elts:shift} in Appendix \ref{appC}).
{\eric Furthermore, we again} point
out that they
are also consistent with those satisfied by the spectral projection,
up to possibly different
{\eric constants}
(See Remarks \ref{rmk:spec:proj}
and \ref{rmk:LPproj}).  For clarity of exposition, our analysis will
be performed with the
{\eric constants detailed}
above, though the conclusions are also
true for $J_h$ given by spectral projection.

\begin{rmk}\label{rmk:morebdd}
We are able to prove other boundedness properties in Appendix \ref{appC} in addition to the ones shown above.  While our analysis requires us only to invoke properties \req{Ih:base0}-\req{TI:9}, the additional boundedness properties asserted in Proposition \ref{prop:Jh:bddness} may find use in other applications.
\end{rmk}

\begin{rmk}\label{rmk:spec:proj}
In the case where $J_h$ is given by the Littlewood-Paley spectral projection, i.e., projection onto Fourier modes $\ls2^{1/h}$, then we replace $C_I(\al,h)$ everywhere above by $C_{S}(\al,h)$ according to the rule
	\begin{align}\notag
		C_{I}(\al,h)h^{r}\mapsto C_{S}(\al,h)h^r:=\begin{cases} C,& r\geq0,\\
						Ch^r,&r<0.
					\end{cases}\quad\text{and}\quad \til{C}_{S}:=C.
	\end{align}
Note that $\al=\al(p)$ implicitly.  One may thus refer to operators $J_h$
with {\eric constants} $C_I$ as ``{\eric Type} I operators" and those with prefactors
$C_{S}$ as ``{\eric Spectral Type I} operators."
Observe that in general we have $C_{S}\ls C_I$,
so all {\eric Spectral Type I} operators are automatically {\eric Type} I
operators.
{\eric We further observe that the Type II operators defined
in~\cite{azouani:olson:titi},
see also~\cite{bessaih:olson:titi},
using nodal-point measurements of the velocity field
in physical space do not satisfy the above bounds.}
\end{rmk}

\begin{rmk}
Note that in the estimates we perform below, the constant $C>0$
appearing in \req{typeI:const11} may change line-to-line when invoking
the above properties.  Nevertheless, it can be fixed to be sufficiently
large in the statement of the theorems where such constants appear.
\end{rmk}

\subsection{Time-averaged Interpolant Observables}


Suppose $\phi=\phi(x,t)$.  We define the time-averaged general
interpolant operator, $J_h^\de$, by
	\begin{align}\label{lp:interpolant}
		(J_h^\de\phi)(x{\eric ,t})
    :=\frac{1}{\de}\int_{t-2\de}^{t-\de}(J_h\phi)(x,s)\ ds
	\end{align}
%
Due to the time-averaging, one must also control errors that
arise from temporal deviations of the time-average from the instantaneous value
value.  Indeed, observe that by the mean value theorem
and by commuting $\bdy_\tau$ with $J_h$ we have
	\begin{align}\label{mvt}
		\phi-J_h^\de\phi
			&=(\phi-J_h\phi)+\frac{1}{\de}\int_{t-2\de}^{t-\de}\int_s^t
			 J_h\bdy_\tau\phi(x,\tau)\ d\tau\ ds.
	\end{align}
We will make crucial use of \req{mvt} when
we perform the a priori estimates.

\begin{rmk}\label{rmk:delay}
{\eric It may seem more natural to represent blurred-in-time
measurements at time $t$ by an average of the form
$$
	(I_h^\de\phi)(x,t):={1\over\delta}
		\int_{t-\delta/2}^{t+\delta/2} ({\eric J_h}\phi)(x,s)\,ds.
$$
However, in this case the corresponding a feedback term
obtained by using $I_h^\de(\eta)$ in place of $J_h^\de(\eta)$
in~\req{fb:sqg:prelim} would violate causality by introducing
an integral over times in the future.
We emphasize that
the same interpolant operator
must be used in the feedback as used for the measurements
in order to maintain the property that
$g=\tht$ for $t\in(-\de,0]$ implies $\eta=\tht$ for all times $t>0$
in the future.
Therefore, the best we could do is insert the
measurement $I_h^\delta(\phi)$ into the model delayed
in time by $\delta/2$.
This approach was taken in \cite{blocher}
and \cite{blocher18} for the Lorenz equations.
In the present work,
an additional delay has been inserted
into the definition of $J_h^\delta \phi$
to make the analysis more convenient.
This allows the feedback control to be treated
as a time-dependent force, thereby transforming
what would have been partial integro-differential equations
into merely partial differential equations.
While any additional delay would achieve the same effect, for
simplicity we choose its order
to be $\delta/2$ which is the same as the delay already dictated by causality.
}\end{rmk}

\subsection{Calculus inequalities}

We will make use of the following bound for the fractional Laplacian,
which can be found for instance in \cite{const:gh:vic, ctv,
  ju:qgattract}.

\begin{prop}\label{lb}
Let $p\geq2$, $0\leq\gam\leq2$, and $\phi\in C^\infty(\T^2)$. Then
	\begin{align}\notag
{\eric
		\frac{2}p\Sob{\Lam^{\gam/2}
                  (\phi^{p/2})}{L^2}^2
		\le
		\int_{\T^2}|\phi|^{p-2}(x)\phi(x)\Lam^\gam
                \phi(x)\ dx.}
	\end{align}
\end{prop}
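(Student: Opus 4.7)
The plan is to establish the inequality via the singular integral representation of $\Lam^\gam$, symmetrize the resulting double integrals, and then reduce the claim to a one-dimensional pointwise algebraic estimate verifiable by Cauchy--Schwarz.

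First I would dispose of the endpoint cases. At $\gam = 0$ the inequality reduces to $\frac{2}{p}\|\phi^{p/2}\|_{L^2}^2 \leq \|\phi\|_{L^p}^p$, which is trivial for $p\ge 2$. At $\gam = 2$, integration by parts gives $\int_{\T^2}|\phi|^{p-2}\phi(-\De\phi)\,dx = (p-1)\int_{\T^2}|\phi|^{p-2}|\del\phi|^2\,dx$ while $|\del(|\phi|^{p/2-1}\phi)|^2 = \tfrac{p^2}{4}|\phi|^{p-2}|\del\phi|^2$, so the claim reduces to $\tfrac{p}{2}\le p-1$, again valid for $p\ge 2$.

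For the main range $\gam\in(0,2)$, I would invoke the periodic singular integral representation
\begin{equation*}
	\Lam^\gam\phi(x) = c_\gam\,\mathrm{P.V.}\int_{\T^2}\bigl(\phi(x)-\phi(y)\bigr)K_\gam(x-y)\,dy,
\end{equation*}
where $K_\gam$ is the positive, symmetric periodization of $c\,|x|^{-2-\gam}$. Pairing against a smooth test function and symmetrizing yields
\begin{equation*}
	\int_{\T^2}\psi(x)\Lam^\gam\phi(x)\,dx = \frac{c_\gam}{2}\iint_{\T^2\times\T^2}\bigl(\psi(x)-\psi(y)\bigr)\bigl(\phi(x)-\phi(y)\bigr)K_\gam(x-y)\,dx\,dy.
\end{equation*}
Setting $f(u):=|u|^{p-2}u$ and $g(u):=|u|^{p/2-1}u$ (equivalently $|u|^{p/2}$, which yields the same $L^2$ norm after applying $\Lam^{\gam/2}$), applying this identity with $\psi=f(\phi)$ on the right-hand side of the proposition and with $\phi\mapsto g(\phi)$, $\psi=g(\phi)$ on the left-hand side, reduces the inequality to the pointwise estimate
\begin{equation*}
	\tfrac{2}{p}\bigl(g(a)-g(b)\bigr)^2 \leq \bigl(f(a)-f(b)\bigr)(a-b)\qquad\text{for all } a,b\in\R.
\end{equation*}

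To verify this one-dimensional estimate, observe that $g'(v)^2 = \tfrac{p^2}{4}|v|^{p-2}$ and $f'(v) = (p-1)|v|^{p-2}$. For $a>b$, Cauchy--Schwarz gives
\begin{equation*}
	\bigl(g(a)-g(b)\bigr)^2 = \Bigl(\int_b^a g'(v)\,dv\Bigr)^2 \leq (a-b)\int_b^a g'(v)^2\,dv = \frac{p^2}{4(p-1)}(a-b)\bigl(f(a)-f(b)\bigr),
\end{equation*}
and since $\tfrac{p^2}{4(p-1)}\leq \tfrac{p}{2}$ is equivalent to $p\ge 2$, the required bound follows. The only delicate point is justifying the symmetrization and the absolute convergence of the double integrals near the diagonal singularity of $K_\gam$; this is routine because $\phi\in C^\infty(\T^2)$ makes $\phi(x)-\phi(y)=O(|x-y|)$, and one may alternatively reformulate everything spectrally via Parseval to avoid the singular integral entirely.
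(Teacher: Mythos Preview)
Your argument is correct and is essentially the standard proof of this inequality (often attributed to C\'ordoba--C\'ordoba and its $L^p$ extension in Ju \cite{ju:qgattract}): symmetrize via the nonlocal kernel representation and reduce to the elementary one-variable bound $\tfrac{2}{p}(g(a)-g(b))^2\le (f(a)-f(b))(a-b)$, which follows from Cauchy--Schwarz exactly as you wrote. The paper itself does not supply a proof of this proposition---it simply quotes the result from \cite{const:gh:vic, ctv, ju:qgattract}---so there is nothing to compare against beyond noting that your proof is the one found in those references.

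One minor quibble: your parenthetical remark that $g(u)=|u|^{p/2-1}u$ and $|u|^{p/2}$ ``yield the same $L^2$ norm after applying $\Lam^{\gam/2}$'' is not literally true (one is odd, the other even, and they genuinely differ as inputs to $\Lam^{\gam/2}$). What is true is that the pointwise inequality you need holds for either choice of $g$, so the proposition is valid under either reading of $\phi^{p/2}$; you might just state it that way instead.
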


We will also make use of the following calculus inequality for
fractional derivatives (cf.  \cite{kato:ponce, kato:ponce:vega} and
references therein):

\begin{prop}\label{prod:rule}
Let $\phi, \psi\in C^\infty(\T^2)$, $\be>0$, and $p\in(1,\infty)$.
Then we have that
	\begin{align}\notag
		\Sob{\Lam^\be(\phi\psi)}{L^p}\leq
                C\Sob{\psi}{L^{p_1}}\Sob{\Lam^\be \phi}{L^{p_2}}+C
                \Sob{\Lam^\be \psi}{L^{p_3}}\Sob{\phi}{L^{p_4}},
	\end{align}
where $1/p=1/p_1+1/p_2=1/p_3+1/p_4$, and $p_2,p_3\in(1,\infty)$, for a
sufficiently large constant $C>0$ that depends only on $\s,p,p_i$.
\end{prop}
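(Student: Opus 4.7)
The inequality is the classical Kato--Ponce fractional Leibniz rule, and I would prove it using Bony's paraproduct decomposition combined with the Littlewood--Paley theory adapted to the periodic setting over $\T^2$. Fix a standard Littlewood--Paley partition of unity $\{\Delta_j\}_{j\ge -1}$ on $\T^2$, set $S_j=\sum_{k<j}\Delta_k$, choose $N$ sufficiently large, and decompose
\[
\phi\psi \;=\; T_\phi\psi \;+\; T_\psi\phi \;+\; R(\phi,\psi),
\]
where $T_\phi\psi=\sum_j S_{j-N}\phi\cdot\Delta_j\psi$, $T_\psi\phi$ is the symmetric paraproduct, and $R(\phi,\psi)=\sum_j \Delta_j\phi\cdot\widetilde{\Delta}_j\psi$ with $\widetilde{\Delta}_j:=\sum_{|k-j|\le 1}\Delta_k$. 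Applying $\Lam^\be$ and bounding each of the three pieces will produce the two terms on the right-hand side.

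For the paraproduct $T_\phi\psi$, the spectrum of each summand $S_{j-N}\phi\cdot\Delta_j\psi$ lies in an annulus of radius $\sim 2^j$, so the Littlewood--Paley square-function characterization of $L^p$, valid for $1<p<\infty$, yields
\[
\Sob{\Lam^\be T_\phi\psi}{L^p} \;\ls\; \Big\|\Big(\sum_j 2^{2j\be}|S_{j-N}\phi|^2\,|\Delta_j\psi|^2\Big)^{1/2}\Big\|_{L^p}.
\]
Using the pointwise bound $|S_{j-N}\phi|\le C\,M\phi$, where $M$ denotes the Hardy--Littlewood maximal operator, together with H\"older's inequality in the pair $(p_3,p_4)$ and the vector-valued Fefferman--Stein inequality applied to the LP square function of $\Lam^\be\psi$ (requiring $p_3\in(1,\infty)$), this is controlled by $\Sob{M\phi}{L^{p_4}}\Sob{\Lam^\be\psi}{L^{p_3}}\ls \Sob{\phi}{L^{p_4}}\Sob{\Lam^\be\psi}{L^{p_3}}$, since $M$ is bounded on $L^{p_4}$ for $p_4>1$ and preserves the $L^\infty$ norm. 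The symmetric argument handles $T_\psi\phi$ and yields the second term on the right-hand side.

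The main obstacle is the remainder $R(\phi,\psi)$: the spectrum of $\Delta_j\phi\cdot\widetilde{\Delta}_j\psi$ sits in a \emph{ball} of radius $\sim 2^j$ rather than in an annulus, so the square-function characterization does not apply summand by summand. The standard remedy is to write
\[
\Delta_n\Lam^\be R \;=\; \sum_{j\ge n-C}\Delta_n\Lam^\be\big(\Delta_j\phi\cdot\widetilde{\Delta}_j\psi\big),
\]
invoke Bernstein's inequality to replace $\Delta_n\Lam^\be$ by a factor $\sim 2^{n\be}\Delta_n$, and then distribute the resulting weight onto either the $\phi$ or the $\psi$ component, using whichever of $(p_1,p_2)$ or $(p_3,p_4)$ yields an admissible H\"older pair. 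The hypothesis $\be>0$ is crucial: it ensures the geometric tail $\sum_{j\ge n-C}2^{-(j-n)\be}$ converges, permitting reassembly through the LP square function and H\"older in the space variable to produce a bound of the form $\Sob{\Lam^\be\phi}{L^{p_2}}\Sob{\psi}{L^{p_1}}+\Sob{\phi}{L^{p_4}}\Sob{\Lam^\be\psi}{L^{p_3}}$, consistent with the stated inequality.

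The most delicate cases arise at the endpoints $p_1=\infty$ (forcing $p_2=p$) or $p_4=\infty$ (forcing $p_3=p$): here the maximal-function bound is essential, as it preserves $L^\infty$ control, whereas a square-function estimate on the low-frequency factor is unavailable. The remaining technical step is to verify that Bernstein's inequality, the Littlewood--Paley square-function equivalence, and the vector-valued Fefferman--Stein inequality transfer without modification from $\R^2$ to $\T^2$, which is standard. Assembling the three bounds completes the proof.
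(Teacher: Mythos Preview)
Your proof is correct and follows the standard modern route to the Kato--Ponce inequality via Bony's paraproduct decomposition and Littlewood--Paley theory. The paper, however, does not prove this proposition at all: it is stated as a known calculus inequality with citations to Kato--Ponce \cite{kato:ponce} and Kenig--Ponce--Vega \cite{kato:ponce:vega}, and no argument is given. So there is no ``paper's proof'' to compare against; you have supplied a proof where the authors simply invoked the literature.

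That said, your sketch is the right one and matches what one finds in those references (or in standard harmonic-analysis texts). The handling of the two paraproducts via the maximal function and the square-function characterization is clean, the treatment of the remainder via Bernstein and the geometric sum in $\be>0$ is the correct mechanism, and you have flagged the only genuinely delicate point, namely the endpoint $p_1=\infty$ or $p_4=\infty$, where boundedness of $M$ on $L^\infty$ replaces a square-function argument. The transfer to $\T^2$ is indeed routine. One small cosmetic note: in the remainder estimate you might be explicit that the pointwise bound $|\Delta_j\psi|\ls M\psi$ (or $|\widetilde\Delta_j\psi|\ls M\psi$) is what lets you pull the undifferentiated factor out in $L^{p_1}$ or $L^{p_4}$, paralleling the paraproduct case; this is implicit in your sketch but worth stating when the endpoint is $\infty$.
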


Finally, we will frequently apply the following interpolation
inequality, which is a special case of the Gagliardo-Nirenberg
interpolation inequality and can be proven with Plancherel's theorem
and the Cauchy-Schwarz inequality:

\begin{prop}\label{interpol}
Let $\phi\in \dot{H}^{\be}_{per}(\T^2)$ and $0\leq \al\leq \be$.  Then
	\begin{align}\label{gn:ineq}
		\Sob{\Lam^{\al} \phi}{L^2}\leq C\Sob{\Lam^{\be}
                  \phi}{L^2}^{\frac{\al}{\be}}\Sob{\phi}{L^2}^{1-
                  \frac{\al}{\be}},
	\end{align}
where $C$ depends on $\al,\be$.
\end{prop}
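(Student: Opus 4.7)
The plan is to transfer the inequality to the Fourier side via Plancherel's theorem and then to apply H\"older's inequality in the counting measure on $\Z^2 \smod \{\mathbf 0\}$. Since $\phi \in \dot H^\beta_{per}$ has mean zero, $\hat\phi(\mathbf 0) = 0$, and the definition \eqref{hdper:norm} together with Plancherel's theorem gives
\[
    \Sob{\Lam^\alpha \phi}{L^2}^2 = 4\pi^2 \sum_{\mathbf k \in \Z^2 \smod \{\mathbf 0\}} \abs{\mathbf k}^{2\alpha} \abs{\hat\phi(\mathbf k)}^2,
\]
with analogous expressions for the $L^2$ and $\dot H^\beta$ norms. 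The boundary cases $\alpha = 0$ and $\alpha = \beta$ yield the inequality trivially, so I would assume $0 < \alpha < \beta$ in what follows.

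The second step is the decomposition
\[
    \abs{\mathbf k}^{2\alpha} \abs{\hat\phi(\mathbf k)}^2 = \bigl(\abs{\mathbf k}^{2\beta} \abs{\hat\phi(\mathbf k)}^2\bigr)^{\alpha/\beta} \bigl(\abs{\hat\phi(\mathbf k)}^2\bigr)^{1 - \alpha/\beta},
\]
followed by H\"older's inequality with conjugate exponents $\beta/\alpha$ and $\beta/(\beta - \alpha)$, which reduces to Cauchy--Schwarz precisely when $\alpha = \beta/2$. This yields
\[
    \sum_{\mathbf k \ne \mathbf 0} \abs{\mathbf k}^{2\alpha} \abs{\hat\phi(\mathbf k)}^2 \le \Bigl(\sum_{\mathbf k \ne \mathbf 0} \abs{\mathbf k}^{2\beta} \abs{\hat\phi(\mathbf k)}^2\Bigr)^{\alpha/\beta} \Bigl(\sum_{\mathbf k \ne \mathbf 0} \abs{\hat\phi(\mathbf k)}^2\Bigr)^{1 - \alpha/\beta}.
\]
Multiplying through by $4\pi^2$ and noting that $(4\pi^2)^{\alpha/\beta}(4\pi^2)^{1-\alpha/\beta} = 4\pi^2$ exactly matches the prefactors required to reinterpret each sum as a square of $L^2$ or $\dot H^\beta$ norm via Plancherel, producing \eqref{gn:ineq} with constant $C = 1$ after taking square roots.

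There is essentially no main obstacle: the entire content of the argument is the H\"older split above. The only points to verify are that the mean-zero condition $\hat\phi(\mathbf 0) = 0$ legitimately excludes $\mathbf k = \mathbf 0$ from the sum (so the factor $\abs{\mathbf k}^{2\alpha}$ presents no singularity when $\alpha > 0$), and that the Poincar\'e inequality \eqref{pineq} ensures all the sums are finite whenever the $\dot H^\beta$ norm is. The statement permits a constant $C$ depending on $\alpha, \beta$, though the above shows one may take $C = 1$; this slack is harmless and presumably just reflects the form in which the inequality will be invoked later.
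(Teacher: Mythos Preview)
Your proof is correct and follows exactly the approach the paper indicates (Plancherel plus H\"older/Cauchy--Schwarz on the Fourier side); the paper does not spell out the details beyond that hint. Your observation that $C=1$ suffices is a small bonus beyond what the statement asks for.
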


\subsection{Well-posedness and Global Attractor of the SQG
	equation}\label{sect:ball}
Let us recall the following well-posedness results of the SQG
equation.  In \cite{const:wu:qgwksol} it was shown that global strong
solutions exist and that weak solutions are unique in the class of
strong solutions.

\begin{prop}[Global existence]\label{prop:glob}
Let $1<\gam\leq 2$, and $\s>2-\gam$.  Given $T>0$, suppose that
$\tht_0\in V_\s$ and $f$ satisfies
	\[
		f\in L^2(0,T; V_{\s-\gam/2})\cap
                L^1(0,T;{L}_{per}^p(\T^2)),
	\]
where $1-\s\leq2/p<\gam-1$.  Then there is a weak solution $\tht$ of
\req{sqg} such that
	\begin{align}\notag
		\tht\in L^\infty(0,T;V_\s)\cap
                L^2(0,T;V_{\s+\gam/2}).
	\end{align}
\end{prop}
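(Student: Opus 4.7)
The plan is a standard Galerkin approximation combined with \emph{a priori} estimates and a compactness argument.  First, I would project \req{sqg} onto the $n$-dimensional subspace $V^n\sub V_\s$ spanned by the first $n$ Fourier modes, obtaining a system of ODEs for the Galerkin approximant $\tht^n(t)$.  Local existence and smoothness in $t$ follow from Picard--Lindel\"of, and global existence on $[0,T]$ will follow once uniform bounds are established.

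The key \emph{a priori} estimates are two.  For the first, testing the Galerkin equation against $\abs{\tht^n}^{p-2}\tht^n$, using that $u^n=\Ri^\perp\tht^n$ is divergence-free (so the advective term vanishes), and applying Proposition \ref{lb}, yields
\[
	\frac{1}{p}\ddt\Sob{\tht^n}{L^p}^p+\frac{2\kap}{p}\Sob{\Lam^{\gam/2}(\abs{\tht^n}^{p/2})}{L^2}^2\leq \Sob{f}{L^p}\Sob{\tht^n}{L^p}^{p-1},
\]
which, after discarding the dissipation, gives a uniform bound on $\Sob{\tht^n}{L^\infty(0,T;L^p)}$ in terms of $\Sob{\tht_0}{L^p}$ and $\Sob{f}{L^1(0,T;L^p)}$; note that the condition $1-\s\leq 2/p$ together with the 2D Sobolev embedding $V_\s\imb L^p$ ensures $\tht_0\in L^p$.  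For the second and more delicate estimate, testing against $\Lam^{2\s}\tht^n$ produces
\[
	\frac{1}{2}\ddt\Sob{\tht^n}{H^\s}^2+\kap\Sob{\tht^n}{H^{\s+\gam/2}}^2 = -\lb\Lam^\s(u^n\cdotp\del\tht^n),\Lam^\s\tht^n\rb+\lb f,\Lam^{2\s}\tht^n\rb.
\]
Since $u^n$ is divergence-free, the nonlinear term reduces to the commutator $\lb[\Lam^\s,u^n\cdotp\del]\tht^n,\Lam^\s\tht^n\rb$, which I would control via the fractional Leibniz rule (Proposition \ref{prod:rule}), interpolation (Proposition \ref{interpol}), and Sobolev embedding, using the previous $L^p$ bound to handle the lowest-order factor in a spatial norm below the $H^\s$-scale.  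The hypotheses $\s>2-\gam$ and $2/p<\gam-1$ are precisely what allow this to take the form $\tfrac{\kap}{2}\Sob{\tht^n}{H^{\s+\gam/2}}^2+C\Sob{\tht^n}{L^p}^{a}\Sob{\tht^n}{H^\s}^{b}$ for exponents $a,b\geq 0$, with the top-order piece absorbed into the dissipation; the forcing term is dispatched via Cauchy--Schwarz, $f\in L^2(0,T;V_{\s-\gam/2})$, and Young's inequality.  Gronwall's inequality then gives uniform control in $L^\infty(0,T;V_\s)\cap L^2(0,T;V_{\s+\gam/2})$.

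Finally, the equation itself yields uniform bounds on $\bdy_t\tht^n$ in $L^2(0,T;V_{-\rho})$ for $\rho$ sufficiently large, so by the Aubin--Lions lemma a subsequence converges strongly in $L^2(0,T;V_{\s'})$ for any $\s'<\s+\gam/2$.  Combined with weak-$*$ convergence in the above spaces, this suffices to pass to the limit in the nonlinear term and identify the limit $\tht$ as a weak solution having the claimed regularity (with norm bounds inherited by lower semi-continuity).  The main technical obstacle will be arranging the commutator estimate so that the interpolation exponents produce a strictly subcritical power of $\Sob{\tht^n}{H^{\s+\gam/2}}$; it is precisely here that the constraints $\s>2-\gam$ and $2/p<\gam-1$ enter, and these degenerate at $\gam=1$, consistent with the argument's failure in the critical case.
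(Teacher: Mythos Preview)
The paper does not supply a proof of this proposition; it is recalled from \cite{const:wu:qgwksol} (see the sentence introducing it in Section~\ref{sect:ball}). Your outline is essentially the argument of that reference, so there is nothing in the paper to compare against beyond the citation.

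One technical point in your sketch deserves care. When you test the Galerkin system against $|\tht^n|^{p-2}\tht^n$ and assert that the advective term vanishes, recall that the nonlinearity in the Galerkin equation carries the projection $P_n(u^n\cdotp\del\tht^n)$, and since $|\tht^n|^{p-2}\tht^n\notin V^n$ in general one has
\[
\lb P_n(u^n\cdotp\del\tht^n),|\tht^n|^{p-2}\tht^n\rb=\lb u^n\cdotp\del\tht^n,P_n(|\tht^n|^{p-2}\tht^n)\rb,
\]
which need not be zero. This is a well-known nuisance; the usual remedies are either to use an alternative regularization (vanishing hyperviscosity or mollification) under which the $L^p$ maximum principle holds at the approximate level, or to run the Galerkin argument using only the $L^2$ and $H^\s$ balances (both clean, since $\tht^n,\Lam^{2\s}\tht^n\in V^n$), obtain a local strong solution, prove the $L^p$ bound directly for it, and continue. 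Your treatment of the $H^\s$ commutator estimate and the role of the constraints $\s>2-\gam$ and $2/p<\gam-1$ is correct and matches the cited source.
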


\begin{prop}[Uniqueness]\label{prop:uniq}
Let $T>0$ and $1<\gam\leq2$.  Suppose that $\tht_0\in
{L}_{per}^2(\T^2)\cap\mathcal{Z}$ and $f\in L^2(0,T;V_{-\gam/2})$.  Then for $p\geq1, q>0$ satisfying
	\[
		\frac{1}p+\frac{\gam}{2q}=\frac{\gam-1}2,
	\]
there is at most one solution to \req{sqg} such that
	$
		\tht\in L^q(0,T;{L}_{per}^p(\T^2)).
	$
\end{prop}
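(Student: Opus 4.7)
The plan is to derive an $L^2$-energy inequality for the difference $w := \tht_1 - \tht_2$ of two putative solutions and close it with a Gronwall argument whose time-integrability exponent is forced precisely by the stated scaling relation. Setting $u_j := \Ri^\perp \tht_j$ and $u_w := u_1 - u_2 = \Ri^\perp w$, the difference $w$ satisfies $w(0) = 0$ and
\[
\bdy_t w + \kap \Lam^\gam w + u_1 \cdot \del w + u_w \cdot \del \tht_2 = 0.
\]
After a standard mollification of $w$ (needed because a priori $w$ lies only in $L^q(L^p)$ and need not be an admissible test function), pairing with $w$ and using $\del\cdot u_1 = 0$ to cancel the transport term $\int u_1 \cdot \del w \cdot w\,dx$ yields the energy identity
\[
\frac{1}{2}\frac{d}{dt}\no{w}_{L^2}^2 + \kap\no{\Lam^{\gam/2} w}_{L^2}^2 = -\int_{\T^2} u_w \cdot \del\tht_2 \cdot w\,dx.
\]

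The heart of the argument is bounding the right-hand side while respecting the low regularity of $\tht_2$. Integration by parts (using $\del\cdot u_w = 0$) rewrites it as $\int \tht_2\, u_w\cdot\del w\,dx$, and Hölder's inequality with three exponents $p, r, s$ subject to $1/p + 1/r + 1/s = 1$ produces the bound $\no{\tht_2}_{L^p}\no{u_w}_{L^r}\no{\del w}_{L^s}$. The $L^r$-boundedness of the Riesz transform together with the Gagliardo--Nirenberg inequality of Proposition \ref{interpol} gives $\no{u_w}_{L^r}\leq C\no{w}_{L^r} \leq C\no{\Lam^{\gam/2} w}_{L^2}^{\al}\no{w}_{L^2}^{1-\al}$ with $\al := (2/\gam)(1 - 2/r)$, while the Sobolev embedding $H^{\gam/2}(\T^2)\imb W^{1,s}(\T^2)$, valid for $s \leq 4/(4-\gam)$, combined with \req{pineq} applied to the mean-zero $w$, yields $\no{\del w}_{L^s}\leq C\no{\Lam^{\gam/2} w}_{L^2}$. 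Selecting $s = 4/(4-\gam)$ (whence $1/r = \gam/4 - 1/p$) and invoking Young's inequality to absorb the dissipation on the left leads to
\[
\abs{\int \tht_2\, u_w \cdot \del w\, dx} \leq \frac{\kap}{2}\no{\Lam^{\gam/2} w}_{L^2}^2 + C \no{\tht_2}_{L^p}^q \no{w}_{L^2}^2
\]
with $q = 2/(1-\al)$; a direct bookkeeping of the relations $\al = (2/\gam)(1-2/r)$ and $1/r = \gam/4 - 1/p$ converts $q = 2/(1-\al)$ into the standing hypothesis $1/p + \gam/(2q) = (\gam-1)/2$.

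Combining this with the energy identity and applying Gronwall's inequality gives
\[
\no{w(t)}_{L^2}^2 \leq \no{w(0)}_{L^2}^2 \exp\!\left(C\int_0^t \no{\tht_2(\tau)}_{L^p}^q \, d\tau\right) = 0,
\]
since $w(0) = 0$ and $\tht_2 \in L^q(0,T;L_{per}^p)$ by assumption, proving $\tht_1 \equiv \tht_2$. The main obstacle I anticipate is the rigorous justification of the energy identity for solutions lying only in $L^q(L^p)$: weak solutions in such a class need not a priori satisfy the energy equality, so the formal pairing must be executed on a mollified version of $w$, with care taken to handle the commutator errors generated by the nonlinear transport in the spirit of a DiPerna--Lions renormalization scheme. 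Once this is in place, the interpolation chain above closes cleanly in the subcritical regime $\gam > 1$.
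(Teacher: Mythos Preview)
The paper does not supply its own proof of this proposition; it is quoted from Constantin--Wu \cite{const:wu:qgwksol}.  Your attempt, however, contains a genuine gap.

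The claimed embedding $H^{\gam/2}(\T^2)\hookrightarrow W^{1,s}(\T^2)$ is false throughout the subcritical range $\gam\in(1,2)$.  Sobolev embeddings never gain derivatives: since $\gam/2<1$, no choice of $s\ge 1$ can make $\Sob{\nabla w}{L^s}\le C\Sob{\Lam^{\gam/2}w}{L^2}$.  The scaling relation $\gam/2-1=1-2/s$ that you wrote down is necessary for such an embedding but far from sufficient; one also needs the regularity index of the source space to dominate that of the target.  Your bound on $\Sob{\nabla w}{L^s}$, and with it the whole chain leading to the Gronwall inequality, therefore collapses.  This is not a technicality that a sharper interpolation repairs: the direct $L^2$-energy estimate for $w$ genuinely does not close when only $\Sob{\tht_2}{L^p}$ (and no derivative of $\tht_2$) is available and $\gam<2$.

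The argument in \cite{const:wu:qgwksol} (mirrored in Section~\ref{sect:pf2} of the present paper, see the treatment of $\K_1$ around \req{z:nlt:1}) circumvents this by passing to the stream function $\psi:=-\Lam^{-1}w$, i.e.\ by running an $\dot H^{-1/2}$ energy estimate on $w$.  Pairing the difference equation with $\psi$, the dissipation now produces $\kap\Sob{\psi}{\dot H^{(\gam+1)/2}}^2$, and the pointwise SQG orthogonality $\Ri^\perp f\cdot\Ri f=0$ annihilates two of the three nonlinear contributions.  The sole surviving term $\int (u_2\cdot\nabla\psi)\,w\,dx$ is controlled by $C\Sob{\tht_2}{L^p}\Sob{\psi}{\dot H^{1+1/p}}^2$ via H\"older and the (legitimate) embedding $\dot H^{1/p}\hookrightarrow L^{2p/(p-2)}$.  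Since $1+1/p\le(\gam+1)/2$ under the stated scaling relation, interpolation between $\dot H^{1/2}$ and $\dot H^{(\gam+1)/2}$ followed by Young's inequality yields exactly the exponent $q$ of the hypothesis, after which Gronwall closes the argument.  The moral is that descending to $\dot H^{-1/2}$ buys one full derivative, precisely what is missing in your $L^2$ approach.
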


Let us recall the following estimates for the reference solution $\tht$ (cf. \cite{ctv, ju:qgattract, resnick}).


\begin{prop}\label{prop:sqg:ball}
Let $\gam\in(0,2]$, $\s>2-\gam$, and $\tht_0\in V_\s,
  f\in V_{\s-\gam/2}\cap{L}_{per}^p(\T^2)$.
  Then there exists a constant $C>0$ such that for any $p
  \geq2$ satisfying $1-\s<2/p<\gam-1$, we have
	\begin{align}\label{fp}
		\Sob{\tht(t)}{L^p}\leq
                \left(\Sob{\tht_0}{L^p}-\frac{1}{C}{\FLp}\right)
					e^{-C{{{\kap}}}t}+\frac{1}{C}
                     {\FLp},\quad
                     {\FLp}:=\frac{1}{{\kap}}\Sob{f}{L^p}.
	\end{align}
Moreover, if $\tht_0\in {L}^2_{per}(\T^2)$ and $f\in V_{-\gam/2}$, then any weak solution $\tht$ of
\req{sqg} satisfies
	\begin{align}\label{fgam}
		\Sob{\tht(t)}{L^2}^2\leq
                \left(\Sob{\tht_0}{L^2}^2-{\FHg^2}\right)e^{-{{{\kap}}}
                  t}+{\FHg^2}, \quad
                \FHg:=\frac{1}{{\kap}}\Sob{f}{H^{-\gam/2}}.
	\end{align}
\end{prop}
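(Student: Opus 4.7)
The plan is to close two energy-type differential inequalities and apply scalar Gronwall; both follow by pairing \req{sqg} against an appropriate test function and exploiting that $u=\Ri^\perp\tht$ is divergence-free, so that the convective term integrates to zero after integration by parts.

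For \req{fgam}, I would test \req{sqg} against $\tht$ in $L^2$. The nonlinear term vanishes since $\int u\cdot\del\tht\,\tht\,dx=\frac{1}{2}\int u\cdot\del|\tht|^2\,dx=0$, and the forcing is handled via duality and Young's inequality, $\langle f,\tht\rangle\leq \frac{1}{2\kap}\|f\|_{\dot{H}^{-\gam/2}}^2+\frac{\kap}{2}\|\Lam^{\gam/2}\tht\|_{L^2}^2$. After absorbing the dissipative remainder and applying \req{pineq} in the form $\|\Lam^{\gam/2}\tht\|_{L^2}^2\geq\|\tht\|_{L^2}^2$, one obtains
\[
\frac{d}{dt}\|\tht\|_{L^2}^2+\kap\|\tht\|_{L^2}^2\leq \kap\FHg^2,
\]
and \req{fgam} follows from the scalar Gronwall inequality.

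For \req{fp}, I would pair \req{sqg} instead against $|\tht|^{p-2}\tht$. Divergence-freeness again eliminates the nonlinear contribution via $\int u\cdot\del\tht\,|\tht|^{p-2}\tht\,dx=\frac{1}{p}\int u\cdot\del|\tht|^p\,dx=0$, Proposition \ref{lb} supplies the dissipation lower bound, and H\"older's inequality handles the forcing, leaving
\[
\frac{1}{p}\frac{d}{dt}\|\tht\|_{L^p}^p+\frac{2\kap}{p}\|\Lam^{\gam/2}(|\tht|^{p/2})\|_{L^2}^2\leq \|f\|_{L^p}\|\tht\|_{L^p}^{p-1}.
\]
To extract exponential decay at rate proportional to $\kap$, I need a coercivity bound of the form $\|\Lam^{\gam/2}(|\tht|^{p/2})\|_{L^2}^2\gs\|\tht\|_{L^p}^p$; once this is available, dividing through by $\|\tht\|_{L^p}^{p-1}$ gives $\frac{d}{dt}\|\tht\|_{L^p}+C\kap\|\tht\|_{L^p}\leq\|f\|_{L^p}$, and scalar Gronwall delivers \req{fp}.

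The main obstacle is precisely the above coercivity: since $|\tht|^{p/2}$ need not be mean-zero even though $\tht$ is, the naive spectral-gap argument does not directly close. I would address this by combining Proposition \ref{lb} with a fractional Gagliardo-Nirenberg interpolation (Proposition \ref{interpol}) and the Sobolev embedding $\dot{H}^{\gam/2}(\T^2)\hookrightarrow L^r(\T^2)$ for $r$ in the range dictated by the subcritical hypothesis $1-\s<2/p<\gam-1$, possibly bootstrapping on the $L^2$ bound already secured in the first step. The implicit constants depend only on $\gam$ and $p$ but not on the solution. All manipulations are formal at the level of strong solutions and would be justified, in the weak-solution setting, by a standard Galerkin approximation in the spirit of \cite{const:wu:qgwksol, resnick}.
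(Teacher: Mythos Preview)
The paper does not prove this proposition; it is recalled from the literature (cf.\ \cite{ctv, ju:qgattract, resnick}). Your approach for \req{fgam} is correct and standard.

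For \req{fp}, your outline is essentially the right one, and in fact coincides with the argument the paper itself carries out later for the analogous estimate on $\eta$ (see the proof of Proposition~\ref{prop:lp:avg}, equations \req{poin:avg}--\req{p2:inter}). You correctly identify the obstruction: $|\tht|^{p/2}$ is not mean-zero, so the Poincar\'e inequality does not apply directly. However, your proposed resolution is somewhat off-target. Proposition~\ref{interpol} is a purely $L^2$-based interpolation and does not help here, and the range $1-\s<2/p<\gam-1$ plays no role in this coercivity step (those conditions enter only in the well-posedness theory and the $H^\s$ estimates). The concrete fix, used both in the cited references and in the paper's proof of Proposition~\ref{prop:lp:avg}, is: apply Poincar\'e to the mean-free part to obtain
\[
\Sob{\Lam^{\gam/2}(|\tht|^{p/2})}{L^2}^2 \geq c\bigl(\Sob{\tht}{L^p}^p - (4\pi^2)^{-1}\Sob{\tht}{L^{p/2}}^p\bigr),
\]
then interpolate $\Sob{\tht}{L^{p/2}}$ between $\Sob{\tht}{L^p}$ and $\Sob{\tht}{L^1}\leq C\Sob{\tht}{L^2}$, and use Young's inequality to absorb the $L^p$ part. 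Your instinct to bootstrap on the $L^2$ bound is therefore correct. Note that this route produces a bound that also depends on the $L^2$ data (hence on $\FHg$), slightly more than the literal statement of \req{fp}; this is consistent with the cited references and is all that is needed for the paper's applications, where only $\Tht_{L^p}<\infty$ is used.
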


{
It was shown in \cite{ju:qgattract} for the subcritical
range $1<\gam\leq2$, that equation \req{sqg} has an absorbing ball in
$V_\s$
and corresponding global attractor
$\A\sub V_\s$ when $\s>2-\gam$.
In other words, there is
a bounded set $\mathcal{B}\sub V_\s$
characterized by the property that for any $\tht_0\in V_\s$,
there exists $t_0>0$ depending on $\Sob{\tht_0}{H^\sigma}$ such that
$S(t)\tht_0\in\mathcal{B}$ for all $t\geq t_0$.
Here $\{S(t)\}_{t\geq0}$
denotes the semigroup of the corresponding dissipative equation.
}

\begin{prop}[Global attractor]\label{prop:ga}
Suppose that $1<\gam\leq2$ and $\s>2-\gam$.  Let $f\in
V_{\s-\gam/2}\cap {L} _{per}^p(\T^2)$, where
$1-\s<2/p<\gam-1$.  Then \req{sqg} has an absorbing ball
$\mathcal{B}_{H^\s} $ given by
	\begin{align}\label{sqg:hs:ball}
		\mathcal{B}_{H^\s}:=\{\tht_0\in\dot{H}^\s_{per}:
                \Sob{\tht_0}{H^\s}\leq \THs\},
	\end{align}
for some $\THs<\infty$.  Moreover, the solution operator $S=S_f$
of \req{sqg}
{\eric given by}
$S(t)\tht_0=\tht(t)$ {\eric for $t\ge0$}
defines a semigroup in the
space $V_\s$ and possesses a global attractor $\A\sub V_\s$, i.e.,
$\A$ is a compact, connected subset of
$V_\s$ satisfying the following properties
	\begin{enumerate}
		\item $\A$ is the maximal bounded invariant set;
		\item $\A$ attracts all bounded subsets in
                  $V_\s$ in the topology of $
                  \dot{H}^\s_{per}$.
	\end{enumerate}
\end{prop}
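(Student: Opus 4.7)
The plan is to establish the result in the standard framework of dissipative dynamical systems, building on the well-posedness results (Propositions \ref{prop:glob} and \ref{prop:uniq}) and the $L^p$-absorbing ball of Proposition \ref{prop:sqg:ball}. The starting point is the observation that the latter already supplies a uniform-in-time $L^p$-bound on $\tht$ for some $p$ in the range $1-\s<2/p<\gam-1$, which is the tool that will control the nonlinear term at the $V_\s$-level.

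First I would derive a $V_\s$-absorbing ball. Apply $\Lam^\s$ to \req{sqg} and pair with $\Lam^\s\tht$ in $L^2$. The dissipative term yields $\kap\Sob{\tht}{\dot{H}^{\s+\gam/2}}^2$ by Plancherel. Since $u=\Ri^\perp\tht$ is divergence-free, the nonlinear term reduces to a commutator $\int [\Lam^\s, u\cdotp\del]\tht\cdotp \Lam^\s\tht\,dx$, which is controlled using the Kato--Ponce product rule of Proposition \ref{prod:rule}, splitting the derivatives between $u$ and $\tht$. A careful choice of Lebesgue exponents lets one bound $u$ in $L^p$ via the $L^p$-absorbing ball and the boundedness of $\Ri^\perp$ on $L^p$, while the remaining $\tht$-factor is absorbed into $\Sob{\tht}{\dot{H}^{\s+\gam/2}}^2$ by Proposition \ref{interpol} and Young's inequality. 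This is possible precisely because $\gam>1$ provides extra dissipation above the scaling-invariant regularity. The Gronwall inequality then produces a time $t_0=t_0(\Sob{\tht_0}{H^\s})$ beyond which $\Sob{\tht(t)}{H^\s}\leq \THs$ with $\THs$ depending only on $\kap,\gam,\s,f$. This gives the absorbing ball $\mathcal{B}_{H^\s}$.

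Second I would bootstrap to obtain asymptotic compactness. Repeating the same type of estimate at the level of $V_{\s'}$ for some $\s'\in(\s,\s+\gam/2)$, while using that $\Sob{\tht}{\dot{H}^{\s+\gam/2}}$ is integrable on bounded intervals and that $\tht$ already lies in $\mathcal{B}_{H^\s}$ for $t\geq t_0$, yields a bounded absorbing set in $V_{\s'}$. Since $\s'>\s$, the embedding $V_{\s'}\imb V_\s$ is compact by Rellich--Kondrachov, so this furnishes a compact absorbing set in $V_\s$. Continuity of $S(t):V_\s\to V_\s$ follows from the uniqueness and stability estimates implicit in Proposition \ref{prop:uniq}. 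Classical dissipative dynamical systems theory (e.g., Temam's monograph) then produces a global attractor $\A\sub V_\s$ satisfying properties (1) and (2); connectedness follows from the convexity of $\mathcal{B}_{H^\s}$ in $V_\s$.

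The main obstacle, as usual in SQG, lies in step one: the nonlinear commutator estimate must be performed with enough care that the resulting differential inequality can be closed without losing derivatives and that the constants depend only on $f$ and the $L^p$-absorbing radius, not on the initial data. The subcriticality condition $\gam>1$ is essential here since it supplies the margin $(\gam-1)/2$ of extra smoothing beyond the nonlinear scaling. The critical case $\gam=1$, by contrast, requires the much deeper De Giorgi-type arguments of \cite{caff:vass,kis:naz:vol} and lies outside the scope of this direct approach; this is why the statement is restricted to $1<\gam\leq2$, and the proof sketched above aligns with the route taken in \cite{ju:qgattract}.
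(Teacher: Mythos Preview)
The paper does not prove this proposition; it is stated as a known result and attributed to \cite{ju:qgattract} (see the paragraph immediately preceding the statement). Your sketch is a faithful outline of the standard argument in that reference: obtain an $L^p$-absorbing ball from Proposition~\ref{prop:sqg:ball}, close the $V_\s$ energy estimate using the Kato--Ponce product rule and the subcritical margin $\gam-1-2/p>0$, then bootstrap to a compact absorbing set and invoke the abstract attractor theory. So there is nothing to compare against in the present paper, and your proposal aligns with the route taken in the cited source.
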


\section{Standing Hypotheses and Statements of main theorems}\label{sect:state}
We will work under the following assumptions for the remainder of the
paper.

\begin{stand}\label{std:hyp}
Assume the following:
\begin{enumerate}[(H1)]
	\item $1<\gam<2$;
	\item $\s\in(2-\gam,\gam]$;
	\item $p\in[1,\infty]$ such that $1-\s<2/p<\gam-1$;
	\item$ f\in V_{\s-\gam/2}\cap L^p$, time-independent;
	\item $\tht_{-2\de}\in\mathcal{B}_{H^\s}$;
	\item $g\in C((-2{\de}, 0];V_{\max\{\s,\gam/2\}})\cap
          L^2((-2{\de}, 0]; V_{\s+\gam/2})$;
	\item $0<h<\pi/4$.
\end{enumerate}
\end{stand}

Observe that $(H1)$ expresses the subcritical range of dissipation,
 while $(H2)-(H5)$ ensure that we are in a regime of global strong
  solutions for \req{sqg} and that the global attractor exists.

Also observe that since $\gam<2$, the range for $\s$ in $(H2)$
 covers the natural spatial regularity class for
 strong solutions, e.g. $H^\gam$.

On the other hand, from $(H1)-(H5)$, Propositions \ref{prop:sqg:ball} and
\ref{prop:ga} imply that
	\begin{align}\label{tht:bounds}
		\TLt:=\sup_{t>-2\de}\Sob{\tht(t)}{L^2}<\infty\quad
                \text{and}\quad\TLp:=\sup_{t>-2\de}
                \Sob{\tht(t)}{L^p}<\infty.
	\end{align}
In particular, it immediately follows from \req{Ih:base0} that
	\begin{align}\label{tht:bc}
		\sup_{t>-2\de}\Sob{J_h^{{\de}}\tht(t)}{L^q}^2\leq
                {C_J}\Tht_{L^q}^2,\quad q\in[1,\infty],
	\end{align}
and from \req{TI:3} that
	\begin{align}\label{tht:bc2}
		\sup_{t>-2\de}\Sob{J_h^{\de}\tht(t)}{H^\s}\leq {C_J}\Tht_{H^\s},
	\end{align}
for some constant ${C_J}>0$.
Also, for $1\leq q\leq\infty$ and $\al\in\R$, let us define
	\begin{align}\label{gam}
		\Gam_{L^q}:=\sup_{t\in(-2\de,0]}\Sob{g(t)}{L^q}
		\quad\text{and}\quad\Gam_{H^\al}:=\sup_{t
                    \in(-2\de,0]}\Sob{g(t)}{H^\al}.
	\end{align}
Then for $p$ given by $(H3)$, the Sobolev embedding theorem
and $(H6)$ imply
	\begin{align}\label{gam:bounds}
		\Gam_{L^p}<\infty,\quad
                \Gam_{H^\s}<\infty
		{\eric \quad\text{and}\quad}
\Gam_{H^{\gam/2}}<\infty.
	\end{align}

Finally, we give
{\eric exact mathematical} statements of our {\eric main} results.
	
\begin{thm}\label{thm1}
Let $\tht$ be the unique
    global strong solution of \req{sqg} corresponding to initial data
    $\tht_{-2\de}$ having zero mean over $\T^2$.  Then under the
    Standing Hypotheses, 
for all $T>0$, there exists a unique strong solution
$\eta\in L^\infty(0,T;\dot{H}_{per}^\s(\T^2))\cap L^2(0,T;\dot{H}_{per}^{\s+ \gam/2}(\T^2))$ satisfying
\req{fb:sqg:prelim} with $\eta(\cdotp,0)=g(\cdotp,0)$.
\end{thm}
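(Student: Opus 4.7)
The plan is to exploit the crucial delay structure built into the feedback term. Because $J_h^\de$ averages over the window $[t-2\de, t-\de]$, for any $t \in [0, \de]$ the integrand involves $\eta(x,s)$ only for $s \in [-2\de, 0]$, where $\eta$ is prescribed to coincide with $g$. Consequently, on $[0,\de]$ equation \req{fb:sqg:prelim} reduces to a standard forced subcritical SQG equation
\begin{align*}
\bdy_t\eta + \kap\Lam^\gam\eta + v\cdotp\del\eta = F,\qquad v = \Ri^\perp\eta,\qquad \eta(\cdotp,0) = g(\cdotp,0),
\end{align*}
with an \emph{a priori known} forcing $F := f - \mu J_h^\de(g) + \mu J_h^\de(\tht)$.

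I would then verify that $F$ satisfies the hypotheses required by Propositions \ref{prop:glob} and \ref{prop:uniq}. By (H4), $f\in V_{\s-\gam/2}\cap L^p$. Using \req{Ih:base0} and \req{TI:3} together with (H6), one finds that $J_h^\de(g)\in L^\infty(0,\de;V_\s\cap L^p)$, while \req{tht:bc} and \req{tht:bc2} yield the analogous bound for $J_h^\de(\tht)$. Since $g(\cdotp,0)\in V_\s$ by (H6), the standard subcritical SQG well-posedness theory then produces a unique strong solution $\eta\in L^\infty(0,\de;V_\s)\cap L^2(0,\de;V_{\s+\gam/2})$.

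The construction is iterated on the intervals $[k\de,(k+1)\de]$. Given $\eta$ on $[0,k\de]$, the integration window $[t-2\de,t-\de]$ lies in $(-2\de,k\de]$ for every $t\in[k\de,(k+1)\de]$, so $J_h^\de(\eta)$ is again a known function, depending on the previously constructed $\eta$ and, when $k\in\{0,1\}$, on $g$. Applying Propositions \ref{prop:glob} and \ref{prop:uniq} on each subinterval with initial datum $\eta(\cdotp,k\de)$ and concatenating yields a unique strong solution on all of $[0,T]$ in finitely many steps, because the subinterval length $\de$ is independent of $k$.

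The main technical point, which is more bookkeeping than a true obstacle, is verifying at each stage that the forcing meets the regularity requirements of the standard theory. This is not delicate because the time-averaging in $J_h^\de$ is smoothing in time, while the boundedness properties \req{Ih:base0}--\req{TI:3} of $J_h$ upgrade the spatial regularity for fixed $h$; moreover, strong solutions produced by Proposition \ref{prop:glob} are continuous into $V_\s$, so the matching of data at the junction times $t=k\de$ is automatic and the uniqueness statement of Proposition \ref{prop:uniq} propagates through the iteration.
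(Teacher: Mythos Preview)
Your proposal is correct and follows essentially the same approach as the paper: the paper likewise decomposes $[0,\infty)$ into intervals $I_k=(k\de,(k+1)\de]$, observes that the delay in $J_h^\de$ renders the feedback a known time-dependent forcing on each $I_k$, verifies via \req{Ih:base0} and \req{TI:3} that this forcing lies in $L^2(I_k;V_{\s-\gam/2})\cap L^1(I_k;L^p_{per})$, and then inductively applies Propositions \ref{prop:glob} and \ref{prop:uniq}. The only cosmetic difference is that the paper writes out the solution as $\eta=\sum_{k\ge -1}\eta^{(k)}\chi_{I_k}$ and phrases the argument as a formal induction, whereas you describe the same iteration in prose.
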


\begin{thm}\label{thm2}
Under the hypotheses of Theorem \ref{thm1}, there exists
constants $c_0, c_0'>0$ such that
if $h, \mu$ satisfy
	\begin{align}\label{mu:sync:cond}
	\frac{1}{c_0'}\left(\frac{\TLp}
                {\kap}\right)^{\gam/(\gam-1-2/p)}\leq\frac{\mu}{\kap}\leq\frac{1}{c_0}h^{-\gam},
	\end{align}
and $\de>0$ is chosen sufficiently small, depending on $h$, then the solution
$\eta$ given by (\ref{fb:sqg:prelim}) satisfies
	\begin{align}\label{synch:thm2}
		\Sob{\eta(t)-\tht(t)}{L^2}^2\leq{O}
			(e^{-\lam_0\mu(t-2\de)}),\quad t>2\de,
	\end{align}
for some constant $\lam_0\in(0,1)$.
\end{thm}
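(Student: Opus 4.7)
The plan is to analyze the difference $\xi := \eta - \tht$, which by subtracting \req{sqg} from \req{fb:sqg:prelim} satisfies
$$\bdy_t\xi + \kap\Lam^\gam\xi + v\cdotp\del\xi + w\cdotp\del\tht = -\mu J_h^\de\xi, \qquad w := \Ri^\perp\xi.$$
Pairing with $\xi$ in $L^2$, using the cancellation $\langle v\cdotp\del\xi,\xi\rangle = 0$ (from $\del\cdotp v=0$), and writing $-\mu J_h^\de\xi = -\mu\xi + \mu(\xi - J_h^\de\xi)$, one obtains the energy identity
\begin{align*}
\tfrac{1}{2}\tfrac{d}{dt}\Sob{\xi}{L^2}^2 + \kap\Sob{\Lam^{\gam/2}\xi}{L^2}^2 + \mu\Sob{\xi}{L^2}^2 = -\langle w\cdotp\del\tht,\xi\rangle + \mu\langle\xi - J_h^\de\xi,\xi\rangle.
\end{align*}
For the nonlinear term I would combine Proposition \ref{prod:rule} with the Gagliardo--Nirenberg inequality (Proposition \ref{interpol}) and the uniform $L^p$-bound \req{tht:bounds} on the reference solution, estimating $|\langle w\cdotp\del\tht,\xi\rangle|$ by $\tfrac12\kap\Sob{\Lam^{\gam/2}\xi}{L^2}^2$ plus a term of the form $C\kap^{-c_1}\TLp^{c_2}\Sob{\xi}{L^2}^2$ whose exponents match the scaling in \req{mu:sync:cond}; the lower bound on $\mu/\kap$ there is precisely what makes this residual absorbable into $\tfrac12\mu\Sob{\xi}{L^2}^2$, leaving at least a quarter of both the dissipation and the damping on the left.

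The heart of the matter is the blur error $\mu\langle\xi - J_h^\de\xi,\xi\rangle$. Using \req{mvt} I split
$$\xi - J_h^\de\xi = (\xi - J_h\xi) + \tfrac{1}{\de}\int_{t-2\de}^{t-\de}\int_s^t J_h\bdy_\tau\xi(\tau)\,d\tau\,ds.$$
The first summand is controlled by the approximation-of-identity bound \req{TI}, and the upper bound $\mu h^\gam \le \kap/c_0$ from \req{mu:sync:cond} renders it absorbable into $\kap\Sob{\Lam^{\gam/2}\xi}{L^2}^2$. For the second (truly blur) summand I substitute the evolution equation
$$\bdy_\tau\xi = -\kap\Lam^\gam\xi - v\cdotp\del\xi - w\cdotp\del\tht - \mu J_h^\de\xi,$$
so that after pairing with $\xi(t)$ (and moving $J_h$ to the test function by duality), the blur error becomes a weighted double time-integral of three pieces: a linear dissipative piece $\langle J_h\Lam^\gam\xi(\tau),\xi(t)\rangle$, a transport piece $\langle J_h(v\cdotp\del\xi+w\cdotp\del\tht)(\tau),\xi(t)\rangle$, and a feedback piece $\mu\langle J_h J_h^\de\xi(\tau),\xi(t)\rangle$. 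The delicate step, which I expect to be the main obstacle, is bounding each of these by $\Sob{\xi(\tau)}{L^2}\Sob{\xi(t)}{L^2}$ with a prefactor that remains controlled as $h\to0$: this is exactly where the negative-order boundedness properties \req{TI:3}--\req{TI:9} of the interpolant operator come in, and one must carefully balance the growth of $C_I(\alpha,h)$ against the negative powers of $h$ so that the composite constant absorbs only a factor of $\de$.

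Once the three pieces are estimated, Cauchy--Schwarz separates $\Sob{\xi(t)}{L^2}$ from the time-integrals of $\Sob{\xi(\tau)}{L^2}^2$ over a window of length $O(\de)$ reaching back from $t$, producing a non-local differential inequality of the form
$$\tfrac{d}{dt}\Sob{\xi(t)}{L^2}^2 + \lam\mu\Sob{\xi(t)}{L^2}^2 \le K(h,\mu)\,\de\,\sup_{\tau\in[t-2\de,\,t]}\Sob{\xi(\tau)}{L^2}^2$$
for some $\lam\in(0,1)$. Choosing $\de$ small enough (depending on $h$ and $\mu$) that $K(h,\mu)\de \ll \lam\mu$, a non-local Gronwall argument --- comparing $\Sob{\xi(t)}{L^2}^2$ with an exponential ansatz $Me^{-\lam_0\mu(t-2\de)}$ on successive windows of length $2\de$ and propagating the bound forward from the data on $(-2\de,0]$ via \req{gam:bounds} --- closes the estimate and yields \req{synch:thm2} for some $\lam_0\in(0,1)$.
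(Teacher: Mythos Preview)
Your approach has a genuine gap in the treatment of the nonlinear cross term $\langle w\cdot\nabla\tht,\xi\rangle$. At the $L^2$ level this term carries one full spatial derivative that must land on either $\xi$ or $\tht$; since dissipation only controls $\Sob{\xi}{H^{\gam/2}}$ with $\gam/2<1$, after integrating by parts and invoking Proposition~\ref{prod:rule} on $\Sob{\Lam^{1-\gam/2}(w\tht)}{L^2}$ you are unavoidably left with a contribution $\Sob{\Lam^{1-\gam/2}\tht}{L^{p_3}}\Sob{w}{L^{p_4}}$ that cannot be reduced to $\Tht_{L^p}$. Thus the residual in your energy inequality cannot be made to match the specific lower bound $(\Tht_{L^p}/\kap)^{\gam/(\gam-1-2/p)}$ in \req{mu:sync:cond}, and the estimate does not close as you describe.

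The paper circumvents this by testing the difference equation against the stream function $\psi=-\Lam^{-1}\z$, i.e.\ by running the energy argument in $H^{-1/2}$ rather than $L^2$. At that level the pointwise identity $\Ri^\perp\z\cdot\Ri\z=0$ (equivalently $w\cdot\nabla\psi=0$) kills both $\langle w\cdot\nabla\z,\psi\rangle$ and $\langle w\cdot\nabla\tht,\psi\rangle$, so the only surviving nonlinear term is $\K_1=\int(u\cdot\nabla\psi)\z\,dx$, in which $\tht$ enters solely through the zero-order operator $u=\Ri^\perp\tht$; this term is estimated using only $\Tht_{L^p}$ with exactly the exponent in \req{mu:sync:cond} (see \req{z:nlt:1} and the interpolation that follows). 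The blur error is then handled along the lines you sketch---splitting via \req{mvt}, invoking \req{TI}, \req{TI:5}, \req{TI:8}, \req{TI:9}, and the non-local Gronwall Lemma~\ref{gron}---to obtain exponential decay of $\Sob{\z}{H^{-1/2}}$ (Proposition~\ref{prop:synch}). The $L^2$ decay of Theorem~\ref{thm2} is then obtained \emph{indirectly}: one interpolates $\Sob{\z}{L^2}\leq\Sob{\z}{H^{-1/2}}^{\alpha}\Sob{\z}{H^\s}^{1-\alpha}$ against the uniform-in-time $H^\s$ bounds supplied by Propositions~\ref{prop:lp:avg}, \ref{prop:bad:hs} and~\ref{prop:ga}. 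This interpolation is also the origin of the factor $\lam_0\in(0,1)$ in \req{synch:thm2}.
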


\begin{rmk}\label{thm:de:cond}
Note that the condition that $\de>0$ be sufficiently small can be described precisely by simultaneously satisfying \req{de:small:l2} and \req{de:sync:avg} below.
\end{rmk}

\begin{rmk}\label{thm:modescase}
As we pointed out in Remark \ref{rmk:spec:proj}, since {\eric Spectral Type I}
operators satisfy all the properties of {\eric Type} I operators,
both Theorem \ref{thm1} and
\ref{thm2} are also valid for {\eric Spectral Type I} operators.
In particular, they are
valid when $J_h$ is given by projection onto finitely many Fourier modes.
\end{rmk}

\begin{rmk}The relationship between
the full three-dimensional quasi-geostrophic equations
and the SQG equation
implies that being able to approximate $\theta$
by $\eta$, as in the conclusion of Theorem \ref{thm2}, is the same
as synchronizing the corresponding three-dimensional solutions
in which the potential vorticity is identically zero and the
vertical motion eliminated.
Therefore, in a way analogous to the discussion in \cite{jmt},
our theorem provides an example where time-averaged data
collected on a two-dimensional surface is sufficient to obtain
synchronization in a three-dimensional domain.
\end{rmk}

Before we move on to the a priori analysis, we will set forth the
following convention for constants.
\begin{rmk}
In the estimates that follow below, $c$ {and} $C$ will generically denote
positive constants, which depend only on other
non-dimensional scalar quantities, and may change line-to-line in the
estimates.  We emphasize that in the estimates we perform below, the
constants $c$ {and} $C$ may change in magnitude from
line-to-line, but as the equations were fully non-dimensionalized
from the beginning they will never carry any physical dimensions.
\end{rmk}

\section{A priori estimates}\label{sect:a priori2}

\subsection{Initial value problem and Proof of Theorem \ref{thm1}} \label{sect:pf1}
We recouch \req{fb:sqg:prelim} as a sequence of initial value problems over
consecutive time intervals.  Once we have defined the setting
properly, we may immediately prove Theorem \ref{thm1}
by appealing to Propositions \ref{prop:glob} and \ref{prop:uniq}.

Observe that owing to the delay in the interpolant operator,
$J_h^{{{\de}}}$, we must initialize the averaging process.  By
$(H1)-(H5)$ and Proposition \ref{prop:ga}, we may assume that $\tht$
is the strong solution of \req{sqg} with initial data starting at
$t=-2{{\de}}$ such that $\tht_{-2\de}\in \mathcal{B}_{H^\s}$.

For any $k\geq-2$ set
	\begin{align}\label{notation}
		I_{-2}:=\emptyset,\quad
                I_{-1}:=(-2{{\de}},0],\quad\text{and}\quad
                  {{\de}_k}:=k{{\de}},\quad
                  I_{k}:=({{\de}_k},{{\de}_{k+1}}],\quad
                    \text{for}\ k\geq 0.
	\end{align}
Let $\eta^{(-1)}(\cdotp, t)=g(\cdotp, t)$ for $t\in I_{-1}$.  Then we
may express a solution, $\eta$, of
	\begin{align}\label{fb:sqg:avg}
			\bdy_t\eta+{{{\kap}}}\Lam^\gam\eta+v\cdotp{\del}\eta=f-{{{\mu}}}
                        J_h^{{{\de}}}(\eta- \tht),\quad
                        v={{}}\Ri^\perp\eta, \quad
                        \eta(x,t)\big{|}_{t\in I_{-1}}=g(x,t).
	\end{align}
as the sum
	\begin{align}\notag
		\eta(x,t):=\sum_{k\geq-1}\ek(x,t)\chi_{I_{k}}(t),
	\end{align}
where for each $k\geq0$, $\ek$ satisfies:
	\begin{align}\label{fb:k}
		\begin{split}
			&\bdy_t\ek+{{{\kap}}}\Lam^\gam\ek+\vk\cdotp{\del}\ek=f-{{{\mu}}}
                  J_h^{{{\de}}}(\ek- \tht),\quad t\in
                  I_{k},\\ &\vk=\Ri^\perp\ek, \quad
                  \ek(x,t)\big{|}_{t\in I_{k-1}\cup
                    I_{k-2}}=\eta^{(k-1)}(x,t).
		\end{split}
	\end{align}
Hence, over each interval $I_k$ we may view the term, $J_h^{{\de}}
\eta^{(k)}$, in \req{fb:k} as a smooth, time-dependent forcing term
and \req{fb:k} as an initial value problem over $I_k$ with initial
data $\eta_0(x)=\eta(x,{\de}_k)$.  The proof of Theorem \ref{thm1}
follows readily.

\begin{proof}[Proof of Theorem \ref{thm1}]
We proceed by induction on $k$.  For $k=0$, from $(H6)$ we have that
$\eta(\cdotp, 0)= g(\cdotp, 0)\in V_\s$.  Since we assume
the Standing Hypotheses, we have that $J_h^\delta g=J_h^\de \eta^{(0)},
J_h^\de\tht\in L^2(0,T; V_{\s-\gam/2})\cap
L^1(0,T;{L}_{per}^p(\T^2))$ holds for all $T>0$ (by \req{Ih:base0} and \req{TI:3}), so that we may apply Proposition \ref{prop:glob} and
\ref{prop:uniq} to deduce existence and uniqueness of a strong
solution, $\eta^{(0)}$, over $I_0$ to \req{fb:k}.  Suppose unique
strong solutions to \req{fb:k} exist for all $\ell=0,\dots, k$.
Consider \req{fb:k} over $I_{k+1}$.
Observe that by
hypothesis $\eta^{(k+1)}(\cdotp, \de_{k+1})=\eta^{(k)} (\cdotp,
\de_{k+1})\in V_\s$ and $J_h^\de\eta^{(k+1)},
J_h^\de\tht\in L^2(\de_{k-1},\de_{k-1}+T;
V_{\s-\gam/2})\cap
L^1(\de_{k-1},\de_{k-1}+T;{L}_{per}^p(\T^2))$ {\eric hold}
once again by \req{Ih:base0} and \req{TI:3}.  Therefore, we apply Proposition
\ref{prop:glob} and \ref{prop:uniq} to guarantee existence and
uniqueness of a strong solution $\eta^{(k+1)}$ to \req{fb:k} over
$I_{k+1}$, completing the proof.
\end{proof}

In the
{\eric remainder of section \ref{sect:a priori2} we establish}
uniform-in-time
estimates for $\eta$ in $L^2$, $L^p$, and $H^\s$.  As we will see,
the synchronization property will rely crucially on these uniform estimates.
{\eric To} obtain uniform
$H^\s$ estimates, we perform a bootstrap from $L^2$ to $L^p$, then
from $L^p$ to $H^\s$.  Once we have collected the requisite uniform
bounds, we proceed {\eric to section \ref{sect:pf2} and} the proof of Theorem \ref{thm2}.

\subsection{Uniform $L^2$ estimates}\label{sect:l2}
{{} In this section, we will ultimately
{\eric obtain $L^2$}
estimates for the solution $\eta$ of \req{fb:sqg:avg} that
are uniform in time.
In this work, any bound of this type shall be
referred to as a {\eric ``good"} bound.}
The main result in this section {\eric is the ``good" bound stated
as Proposition \ref{prop:l2:avg} below.}
We emphasize
that the structure of the analysis in sections \ref{sect:temp:osc}, \ref{sect:l2:trans}, and \ref{sect:l2:avg} will be
{\eric mimicked in section}
\ref{sect:pf2} when we establish the synchronization property.

{\eric We begin by introducing some notation that will be convenient
when expressing the necessary bounds in our proofs.  Let}
	\begin{align}\label{M2:def:final}
		\til{R}_{L^2}^2:=\frac{\kap^2}{\mu^2}F_{H^{-\gam/2}}^2
		+{C_J}\Tht_{L^2}^2,\quad
                R^2_{L^2}:=\frac{\kap}\mu
                F_{H^{-\gam/2}}^2+{C_J}\Tht_{L^2}^2,\quad M_{L^2}^2:=\Gam_{2,1}^2+8R_{L^2}^2
	\end{align}
where $\Gamma_{2,k}$ is the function of $\delta>0$ given by
	\begin{align}\label{Gamk}
\Gam_{2,-1}:=\Gam_{L^2}\quad\hbox{and}\quad
		\Gam_{2,k}^2:=\Gam_{2,k-1}^2
			+C\frac{{\de}{\mu}^2}{{\kap}}\left(\Gam_{2,k-1}
                ^2+\til{R}_{L^2}^2\right)
	\quad\hbox{for}\quad
	 k\geq0.
	\end{align}
{\eric
Note that
$\Gamma_{2,k}$ and consequently $M_{L^2}^2$ are
increasing functions of $\delta$.
Therefore, any upper bounds given by the constants defined in
(\ref{M2:def:final}) and (\ref{Gamk}) for a particular
$\delta=\delta_0$
continue to hold when $\delta<\delta_0$.
We shall immediately make use of this property to show
that the hypotheses on $\delta$ in
Proposition \ref{prop:l2:avg} stated below
are not vacuous.}

\begin{prop}\label{prop:l2:avg}

There exist constants $c_0, c_1>0$, with $c_1$ depending on $c_0$,
such that if $h, {\mu}$ satisfy
	\begin{align}\label{m:mu:avg:l2}
		\frac{\mu}{\kap}\leq \frac{1}{c_0}h^{-\gam},
	\end{align}
and ${\de}$ is chosen {\eric such that}
	\begin{align}\label{de:small:l2}
		\begin{split}
		\delta \le \frac{1}{c_1}\frac{h^\gam}{\kap}\min\Bigg{\{}&1,
				{h^{\gam}}\frac{8R_{L^2}^2}{(M_{L^2}^2+
            \til{R}_{L^2}^2)},
    		\left(\frac{h}{2\pi}\right)\frac{1}{(1+\kap^{-1}h^{\gam-2})}\frac{R_{L^2}}{\left(1+M_{L^2}^2+R_{L^2}^2\right)}
			\Bigg{\}}
	\end{split}
		\end{align}
as well as
	\begin{align}\label{gron:cond:l2}
		\delta\le
		\frac{1}{c_1}\left(\frac{h}{2\pi}\right)\min
		\left\{\left(\frac{\mu h^\gam}{\kap}\right)^{1/2}
		\frac{h^2}{M_{L^2}},\frac{h^\gam}{\kap}\right\}
	\end{align}
where $\widetilde R_{L^2}^2$, $R_{L^2}^2$
and $M_{L^2}^2$ are given in (\ref{M2:def:final}),
then
	\begin{align}\label{m2:full}
		\Sob{\eta(t)}{L^2}^2\leq
    \left(\Gam_{2,1}^2-8{R_{L^2}^2}\right)e^{-({{{\mu}}}/2)(t-2\de)}
                +8{R_{L^2}^2}\quad\hbox{for}\quad t\geq2\de
	\end{align}
and
	\begin{align}\label{mgam2:avg}
		\frac{{{\kap}}}4\int_{I_k}\Sob{\eta(s)}{H^{\gam/2}}^2\ ds\leq
                \Gam_{2,1}^2+8R_{L^2}^2
			\le M_{L^2}^2
	\quad\hbox{for}\quad k\geq2.
	\end{align}
\end{prop}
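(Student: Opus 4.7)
The plan is to perform an $L^2$ energy estimate on \req{fb:sqg:avg}, pairing with $\eta$ and exploiting that the transport term drops out because $v=\Ri^\perp\eta$ is divergence-free. Applying Proposition \ref{lb} with $p=2$ and moving the $-\mu J_h^\de(\eta-\tht)$ term to the right, one obtains, after rewriting $-\mu\langle J_h^\de\eta,\eta\rangle=-\mu\|\eta\|_{L^2}^2+\mu\langle\eta-J_h^\de\eta,\eta\rangle$,
\begin{align*}
\tfrac12\tfrac{d}{dt}\|\eta\|_{L^2}^2+\kap\|\eta\|_{\dot H^{\gam/2}}^2+\mu\|\eta\|_{L^2}^2=\mu\lb\eta-J_h^\de\eta,\eta\rb+\mu\lb J_h^\de\tht,\eta\rb+\lb f,\eta\rb.
\end{align*}
The $\lb f,\eta\rb$ and $\mu\lb J_h^\de\tht,\eta\rb$ contributions are absorbed by $(\mu/4)\|\eta\|_{L^2}^2$ together with the dissipation, using Young's inequality, the Poincar\'e bound \req{pineq}, \req{tht:bc}, and the definition of $F_{H^{-\gam/2}}$; these produce the ``good'' constant $R_{L^2}^2$ in \req{M2:def:final}.

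The essential difficulty lies in the oscillation term $\mu\lb\eta-J_h^\de\eta,\eta\rb$, which I would split using identity \req{mvt}. The spatial-only part $\eta-J_h\eta$ is handled by the approximation-of-identity bound \req{TI} at $\be=\gam/2$, giving $C\mu h^{\gam/2}\|\eta\|_{\dot H^{\gam/2}}\|\eta\|_{L^2}$; Young's inequality absorbs this into the dissipation and the coercive $\mu$-term precisely under \req{m:mu:avg:l2}. The genuinely new term is the double integral
\begin{align*}
\frac{\mu}{\de}\int_{t-2\de}^{t-\de}\!\!\int_s^t\lb J_h\bdy_\tau\eta(\tau),\eta(t)\rb\,d\tau\,ds.
\end{align*}
Here I would substitute the PDE for $\bdy_\tau\eta$ and bound each summand (linear dissipation, nonlinear transport, forcing, and feedback) in the dual pairing $\dot H^{-\gam/2}\times\dot H^{\gam/2}$, using the negative-order boundedness properties \req{TI:3}--\req{TI:9} of $J_h$ to compensate for the relatively low regularity of $\Lam^\gam\eta$, $\Ri^\perp\eta\cdot\del\eta$, and $J_h^\de(\eta-\tht)$. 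After Cauchy--Schwarz in $\tau,s$ and Young, this contributes a term of the form $C\tfrac{\de\mu^2}{\kap}\sup_{\tau\in[t-2\de,t]}\bigl(\|\eta(\tau)\|_{L^2}^2+\til R_{L^2}^2\bigr)$, with an additional piece absorbed by the dissipation provided $\de$ satisfies the second and third constraints in \req{de:small:l2}.

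Combining everything yields the non-local differential inequality
\begin{align*}
\tfrac{d}{dt}\|\eta\|_{L^2}^2+\mu\|\eta\|_{L^2}^2+\tfrac{\kap}{2}\|\eta\|_{\dot H^{\gam/2}}^2\le C\mu R_{L^2}^2+C\tfrac{\de\mu^2}{\kap}\sup_{\tau\in[t-2\de,t]}\bigl(\|\eta(\tau)\|_{L^2}^2+\til R_{L^2}^2\bigr).
\end{align*}
Because the right-hand side depends on $\eta$ over a sliding window of length $2\de$, ordinary Gronwall does not apply; instead I would iterate the estimate over the intervals $I_k$ introduced in \req{notation}. Setting $\Gam_{2,k}^2:=\sup_{t\in I_{k-1}\cup I_k}\|\eta(t)\|_{L^2}^2$ and multiplying by the integrating factor $e^{\mu(t-\de_k)/2}$, the recursion \req{Gamk} emerges naturally from integration over $I_k$; the smallness condition \req{gron:cond:l2} then forces the geometric series $\sum_k(C\de\mu^2/\kap)^k$ to converge and the running $\Gam_{2,k}^2$ to stay below $\Gam_{2,1}^2+8R_{L^2}^2=M_{L^2}^2$, producing \req{m2:full}. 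Integrating the dissipation over a single interval $I_k$ with $k\ge 2$ then immediately yields \req{mgam2:avg}.

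The main obstacle throughout will be balancing the three scales $h,\mu,\de$: the loss $h^{-\rho}$ in the negative-order bounds for $J_h$ must be offset by powers of $h$ coming from the approximation-of-identity, while the non-local oscillation contribution must be kept strictly subcritical with respect to the coercive $\mu\|\eta\|_{L^2}^2$. The simultaneous constraints \req{m:mu:avg:l2}--\req{gron:cond:l2} are exactly what is needed to close the iteration; verifying that \req{de:small:l2} and \req{gron:cond:l2} are compatible (and non-vacuous in view of the monotonicity of $M_{L^2}^2$ in $\de$) is the routine but tedious bookkeeping step that will dictate the precise form of the constants $c_0,c_1$.
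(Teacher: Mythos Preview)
Your energy identity, the splitting via \req{mvt}, and the use of \req{TI} and \req{TI:3}--\req{TI:9} to control $\lVert J_h\bdy_\tau\eta\rVert_{H^{-\gam/2}}$ all match the paper. The substantive divergence is in how you close the oscillation term over the \emph{current} interval $I_k$, and this is where your sketch has a gap.

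The paper does not land on a differential inequality with $\sup_{[t-2\de,t]}\lVert\eta\rVert_{L^2}^2$ on the right. Instead it splits $\int_{t-2\de}^t\lVert J_h\bdy_s\eta\rVert_{H^{-\gam/2}}^2ds$ into pieces over $I_{k-1}$, $I_k$, and $[\de_{k+1},t]$. For the two past intervals it uses a preliminary ``rough'' bound (Lemma~\ref{prop:bad} and Corollary~\ref{coro:bad}) together with the $L^2$-based estimate $\kap\lVert J_h\Lam^\gam\eta\rVert_{H^{-\gam/2}}\lesssim \kap h^{-\gam/2}\lVert\eta\rVert_{L^2}$; for the current interval it instead uses the sharper pointwise bound of Lemma~\ref{lem:dt} involving $\lVert\eta(s)\rVert_{H^{\gam/2}}^2$ and $\lVert\eta(s)\rVert_{L^2}^2$. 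After integration this produces
\[
\frac{d}{dt}\lVert\eta\rVert_{L^2}^2+\mu\lVert\eta\rVert_{L^2}^2+\kap\lVert\eta\rVert_{H^{\gam/2}}^2
\le F + C_1\de\int_{\de_{k+1}}^t\lVert\eta\rVert_{L^2}^2 + C_2\de\int_{\de_{k+1}}^t\lVert\eta\rVert_{H^{\gam/2}}^2,
\]
and the paper closes this with a tailor-made nonlocal Gronwall inequality (Lemma~\ref{gron}) that handles exactly these integral terms and then iterates via part~(ii) to propagate the decay across intervals. Your sup-based inequality cannot be fed into that lemma; it would require a Halanay-type delay-Gronwall inequality, which you neither state nor prove, and your ``geometric series $\sum_k(C\de\mu^2/\kap)^k$'' argument yields at best a uniform bound, not the exponential rate $e^{-(\mu/2)(t-2\de)}$ in \req{m2:full}.

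Two smaller points: first, your sup includes the current time, so you cannot close without an a priori bound on $I_{k+1}$---this is precisely the role of the rough estimate (Corollary~\ref{coro:bad}), which you omit. Second, you redefine $\Gam_{2,k}$ as a running supremum, whereas in the paper $\Gam_{2,k}$ is the explicit recursion \req{Gamk} used \emph{only} to quantify growth over the initial transient $k=-1,0,1$; the induction for $k\ge2$ proceeds through \req{indbound} and Lemma~\ref{gron}, not through $\Gam_{2,k}$.
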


{\eric Observe that both sides of the inequalities given
by (\ref{de:small:l2}) and (\ref{gron:cond:l2}) depend on $\delta$.
This is, as already mentioned, because $M_{L^2}^2$ depends on $\delta$.
However, since $M_{L^2}^2$ appears in the denominator of the right-hand
side and is an increasing function of $\delta$, it is easy to see
that there must be a $\delta>0$ which satisfies both these
inequalities.}

To prove {\eric Proposition \ref{prop:l2:avg}, we
{\eric employ}
three preliminary lemmas.  First,
in section \ref{sect:l2:rough}
{\eric we establish}
bounds in $L^2$ which are uniform in each time interval $I_k$,
but ultimately depend on $k$.
Throughout this work we will refer to any bounds that
depend on $k$ as {\eric ``{rough}"} bounds.
Such bounds are insufficient on their own but needed in
order to close estimates later. }
Then in section \ref{sect:temp:osc},
{\eric we establish}
time-derivative estimates to control the temporal
oscillations that emanate from the feedback term (see section
\ref{sect:temp:osc}).
The third
{\eric lemma is}
is a non-local
Gronwall inequality that ensures uniform bounds provided that the
window of time-averaging is sufficiently small; its proof is deferred
to Appendix \ref{appA}.  This Gronwall inequality will be
used again to establish the synchronization
property in 
section \ref{sect:pf2}.  We finally prove Proposition \ref{prop:l2:avg} in section
\ref{sect:l2:avg}.

\begin{rmk}\label{rmk:mu:exchange}
{\eric We will} often exchange the quantity $\mu$ for the quantity $\kap
h^{-\gam}$ via the relation~\req{m:mu:avg:l2}, in order to emphasize that
$\de$ and $\mu$ ultimately depend only on $h$ (and $\Tht_{L^p}$) alone.
\end{rmk}

\subsubsection{Rough $L^2$ estimates}\label{sect:l2:rough}
We will first establish the following ``rough" a priori bound.  We
omit most of the details, though they can easily be gleaned from the
proof of Proposition \ref{prop:l2:avg}.  An alternative form of
Lemma \ref{prop:bad} is {\eric given by} Corollary \ref{coro:bad}
stated below,
which will be convenient to use
{\eric in} the proof of
Proposition~\ref{prop:l2:avg} later.

\begin{lem}\label{prop:bad}
Let $F_{H^{-\gam/2}}, \Tht_{L^2}, \til{R}_{L^2}$ be given by
\req{fgam}, \req{tht:bounds}, \req{M2:def:final}, respectively.  There
exists a constant $C_0>0$, independent of $k$, such that
	\begin{align}\label{bad:Mest}
		\Sob{\eta(t)}{L^2}^2+{\kap}
	\int_{{{\de}_k}}^t\Sob{\eta(s)}{H^{\gam/2}}^2\ ds\leq
        \til{M}_{L^2} ^2(k,t),\quad t\in I_k,\quad k\geq0,
	\end{align}
 where
	\begin{align}\label{M:til}
		\til{M}_{L^2}^2(k,t)
	:=\Sob{\eta({\de}_k)}{L^2}^2+C_0
		\frac{{\de}{\mu}^2}{{\kap}}\left[\til{R}_{L^2}^2+
              \bigg(\sup_{s\in I_{k-2}\cup I_{k-1}}\Sob{\eta(s)}{L^2}^2\bigg)\right]{\eric .}
	\end{align}
\end{lem}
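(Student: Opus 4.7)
The plan is to run a standard $L^2$ energy estimate on equation \req{fb:k}, where the only nonstandard ingredient is the treatment of the time-averaged feedback term. The key structural fact to exploit is that when $t\in I_k$, the averaging window $[t-2\de,t-\de]$ lies entirely in $I_{k-2}\cup I_{k-1}$, so the resulting estimate inevitably carries a memory dependence on the two preceding time windows---this is exactly what makes the bound ``rough.''

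Concretely, I would pair \req{fb:k} in $L^2$ with $\ek$. The advection term drops by the divergence-free character of $\vk=\Ri^\perp\ek$; the dissipative term yields $\kap\Sob{\ek}{H^{\gam/2}}^2$; the force is handled by $H^{-\gam/2}/H^{\gam/2}$ duality and Young's inequality, contributing a term of size $\kap F_{H^{-\gam/2}}^2$ modulo $\tfrac{\kap}{4}\Sob{\ek}{H^{\gam/2}}^2$. The feedback is controlled via Cauchy--Schwarz and Young,
\begin{align*}
\mu\lvert(J_h^\de(\ek-\tht),\ek)\rvert
\leq \tfrac{\mu^2}{\kap}\Sob{J_h^\de(\ek-\tht)}{L^2}^2+\tfrac{\kap}{4}\Sob{\ek}{L^2}^2,
\end{align*}
where the last piece is absorbed into the dissipation via the Poincar\'e inequality \req{pineq}.

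The main step is then to unfold $J_h^\de$: by Minkowski's integral inequality followed by \req{Ih:base0},
\begin{align*}
\Sob{J_h^\de(\ek-\tht)(t)}{L^2}^2
\leq \frac{C}{\de}\int_{t-2\de}^{t-\de}\Sob{\eta(s)-\tht(s)}{L^2}^2\,ds,
\end{align*}
after which the crude split $\Sob{\eta-\tht}{L^2}^2\leq 2\Sob{\eta}{L^2}^2+2\Tht_{L^2}^2$ produces the constant $\til R_{L^2}^2=\tfrac{\kap^2}{\mu^2}F_{H^{-\gam/2}}^2+C_J\Tht_{L^2}^2$ once the forcing term from the previous step is repackaged against the $\tfrac{\mu^2}{\kap}$ prefactor. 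Integrating the resulting differential inequality from $\de_k$ to $t\in I_k$ then yields a double time integral whose inner range $[s'-2\de,s'-\de]$, for $s'\in(\de_k,t]$, sits inside $I_{k-2}\cup I_{k-1}$; majorizing the inner integrand by $\sup_{s\in I_{k-2}\cup I_{k-1}}\Sob{\eta(s)}{L^2}^2$ and noting that both the outer and inner ranges have length at most $\de$ gives exactly the prefactor $\tfrac{C_0\de\mu^2}{\kap}$ in \req{M:til}.

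The principal subtlety---and the reason this bound is still labeled ``rough''---is precisely the memory dependence on the two prior intervals encoded in $\til M_{L^2}^2(k,t)$, which in isolation could grow with $k$. Promoting \req{bad:Mest} to a genuine uniform-in-$k$ estimate, as in Proposition \ref{prop:l2:avg}, will require a further non-local Gronwall argument exploiting the smallness of $\de$; that ingredient is addressed separately and is not needed for the present lemma, which only asserts the interval-by-interval inequality.
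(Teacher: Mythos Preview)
Your proposal is correct and follows essentially the same approach as the paper's proof: a standard $L^2$ energy estimate on \req{fb:k}, using that the advection term vanishes, handling $f$ by $H^{-\gam/2}/H^{\gam/2}$ duality, and controlling the feedback via Cauchy--Schwarz/Young together with the $L^2$-boundedness \req{Ih:base0} of $J_h$. The only cosmetic differences are that the paper splits $J_h^\de\eta$ and $J_h^\de\tht$ separately and takes the supremum over $I_{k-2}\cup I_{k-1}$ \emph{before} integrating in time (so no double integral appears), whereas you keep $J_h^\de(\eta-\tht)$ together and majorize the resulting double integral afterward; both routes yield the same prefactor $C_0\de\mu^2/\kap$ and the same bound \req{M:til}.
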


\begin{proof}
Suppose $t\in I_k$ for some $k\geq0$.  We perform standard energy
estimates to obtain
	\begin{align}\label{balance:bad}
		\frac{d}{dt}\Sob{\eta}{L^2}^2
	+{{{{\kap}}}}\Sob{\Lam^{\gam/2}\eta}{L^2}^2\leq{\kap}F_{H^{-\gam/2}}^2
	+C\frac{{{{\mu}}}^2}{{{{\kap}}}}
	\left(\Sob{J_h^{{\de}}\tht}{L^2}^2+\Sob{J_h^{{\de}}\eta}{L^2}
                ^2\right).
	\end{align}
Observe that by the Cauchy-Schwarz inequality and \req{Ih:base0} we have
	\begin{align}\notag
		\Sob{J_h^{{\de}}\eta(t)}{L^2}^2\leq C\bigg(\sup_{s\in
                  I_{k-2}\cup I_{k-1}}\Sob{\eta(s)}{L^2}
                ^2\bigg),\quad t\in I_k.
	\end{align}

Returning to \req{balance:bad} and applying these facts along with
\req{tht:bc}, we obtain
	\begin{align}\label{balance:bad2}
		\frac{d}{dt}\Sob{\eta}{L^2}^2+{{{\kap}}}\Sob{\eta}{H^{\gam/2}}^2\leq
                \left({{{\kap}}}F_{H^{-
                    \gam/2}}^2
		+C\frac{{{{\mu}}}^2}{{{{\kap}}}}\Tht_{L^2}^2\right)
		+C\frac{{{{\mu}}}^2}{{{{\kap}}}}\bigg(\sup_{s\in
                  I_{k-2}\cup I_{k-1}}\Sob{\eta(s)}{L^2}^2\bigg).
	\end{align}
Finally, by integrating \req{balance:bad2} {\eric over $[\delta_k,t]$ for}
$t\in I_k$ we arrive
at
	\begin{align}\notag
		\Sob{\eta(t)}{L^2}^2&+{{{\kap}}}
	\int_{{\de}_k}^t\Sob{\eta(s)}{H^{\gam/2}}^2\ ds\\
	&\leq
            \Sob{\eta({\de}_k)}{L^2}^2
	+{\de}\frac{{\mu}^2}{{\kap}}\left[\left(\frac{{\kap}^2}{{\mu}^2}
		F_{H^{-\gam/2}}
              ^2+C\Tht_{L^2}^2\right)+C\bigg(\sup_{s\in
            	I_{k-2}\cup I_{k-1}}\Sob{\eta(s)}{L^2}^2\bigg)\right],
	\end{align}
which can be simplified to \req{bad:Mest} using \req{M2:def:final}, as desired.
\end{proof}

\begin{coro}\label{coro:bad}
Let $k>0$.  Suppose that for each $0\leq \ell\leq k$, there exists $M_\ell>0$ such that
	\begin{align}\notag
		\Sob{\eta(t)}{L^2}^2+\kap\int_{\de_k}^t
			\Sob{\eta(s)}{H^{\gam/2}}^2\ ds\leq
                M_\ell,\quad t\in(-2\de,\de_{\ell+1}].
	\end{align}
Then there exists a constant $C_0>0$, independent of
{\eric $k$},
such that
	\begin{align}\notag
\Sob{\eta(t)}{L^2}^2+\kap\int_{\de_{k+1}}^t\Sob{\eta(s)}{H^{\gam/2}}^2\ ds\leq
M_k+C_0\frac{{\de}{\mu} ^2}{{\kap}}\left(\til{R}_{L^2}^2+M_k\right),\quad
t\in I_{k+1}.
	\end{align}
\end{coro}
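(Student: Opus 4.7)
The plan is to deduce this corollary as a direct consequence of Lemma \ref{prop:bad}, applied at level $k{+}1$, combined with the standing hypothesis that supplies uniform bounds on $\eta$ over earlier time intervals.

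First I would apply Lemma \ref{prop:bad} with the index $k$ replaced by $k{+}1$. This yields, for every $t\in I_{k+1}$,
\begin{align*}
	\Sob{\eta(t)}{L^2}^2+\kap\int_{\de_{k+1}}^t\Sob{\eta(s)}{H^{\gam/2}}^2\,ds
	\le \Sob{\eta(\de_{k+1})}{L^2}^2
	+C_0\frac{\de\mu^2}{\kap}\Bigl[\til{R}_{L^2}^2
	+\sup_{s\in I_{k-1}\cup I_{k}}\Sob{\eta(s)}{L^2}^2\Bigr],
\end{align*}
where $C_0$ is the same constant as in the lemma, independent of $k$.

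Next I would invoke the standing hypothesis of the corollary with $\ell=k$. Since $I_{k-1}\cup I_k\subset(-2\de,\de_{k+1}]$ and also $\de_{k+1}\in(-2\de,\de_{k+1}]$, the hypothesized bound gives
\begin{align*}
	\Sob{\eta(\de_{k+1})}{L^2}^2\le M_k\qquad\text{and}\qquad
	\sup_{s\in I_{k-1}\cup I_{k}}\Sob{\eta(s)}{L^2}^2\le M_k.
\end{align*}
Substituting these two inequalities into the bound from the first step yields exactly
\begin{align*}
	\Sob{\eta(t)}{L^2}^2+\kap\int_{\de_{k+1}}^t\Sob{\eta(s)}{H^{\gam/2}}^2\,ds
	\le M_k+C_0\frac{\de\mu^2}{\kap}\bigl(\til{R}_{L^2}^2+M_k\bigr),
\end{align*}
which is the claimed estimate.

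There is no real technical obstacle here; the content of the corollary is essentially a clean restatement of Lemma \ref{prop:bad} in ``incremental'' form, suitable for the inductive/bootstrap arguments that will drive the uniform $L^2$ bound in Proposition \ref{prop:l2:avg}. The only care required is bookkeeping the intervals, namely checking that the supremum of $\Sob{\eta(s)}{L^2}^2$ over $I_{k-1}\cup I_k$ that appears when Lemma \ref{prop:bad} is invoked at level $k{+}1$ lies inside the time window $(-2\de,\de_{k+1}]$ where the hypothesized bound $M_k$ applies, so that the same constant $M_k$ controls both the initial value at $t=\de_{k+1}$ and the previous-two-intervals supremum. This is why the corollary is phrased with cumulative bounds $M_\ell$ valid on the whole initial segment $(-2\de,\de_{\ell+1}]$ rather than just on $I_\ell$.
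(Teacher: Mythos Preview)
Your argument is correct and is exactly the intended derivation: the paper states Corollary~\ref{coro:bad} immediately after Lemma~\ref{prop:bad} without a separate proof, describing it simply as ``an alternative form of Lemma~\ref{prop:bad},'' and your two-step deduction (apply the lemma at level $k{+}1$, then use the hypothesis at $\ell=k$ to control both $\Sob{\eta(\de_{k+1})}{L^2}^2$ and the supremum over $I_{k-1}\cup I_k$) is precisely how one unpacks that.
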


{\eric While $\delta$ can be chosen in these bounds
so that the size of $\delta\mu^2/\kappa$ is small,}
this alone does not suffice to obtain uniform-in-time
bounds for $\Sob{\eta(t)}{L^2}$
upon iteration in $k$, which will be crucial in establishing the
synchronization property.  Nevertheless,
these ``rough" bounds {\eric will be useful} in order to
close our estimates {\eric and achieve uniform bounds later}.

\subsubsection{Control of temporal oscillations at fixed spatial scale}
	\label{sect:temp:osc}
We recall from \req{mvt} that we will require estimates for the
time-derivative, $\bdy_t\eta$, but only over length scales $\gtrsim h$, where $h$ measures the spatial resolution of the observables.


\begin{lem}\label{lem:dt}
Let $k>0$.  {\eric Suppose} there exists $M_\ell>0$ such that
	\begin{align}\label{mk:l2}
		\sup_{t\in(-2\de,\de_\ell]}\Sob{\eta(t)}{L^2}\leq M_{\ell-1}
{\eric \qquad\hbox{for each}\qquad 0\leq\ell\leq k+2}.
	\end{align}
Let $c_0>0$ be any constant {\eric such that}
	\begin{align}\label{mu:cond:dt:l2}
		\frac{\mu h^{\gam}}{\kap}\leq \frac{1}{c_0}{\eric .}
	\end{align}
Then there exists a constant $C_0>0$, depending on $c_0$,
but independent of $k$, such that
	\begin{align}\label{dt:l2:past}
		 \Sob{(J_h\bdy_t\eta)(t)}{H^{-\gam/2}}^2
		\leq C_0\left(\frac{2\pi}{h}\right)^2\frac{\kap^2}{h^{\gam}}
		\left(1+\frac{1}{\kap}\frac{1}{h^{2-\gam}}\right)^2
		\left(1+M_k^2+R_{L^2}^2\right)^2
	\end{align}
{\eric holds for all} $t\in (-2\de,\de_{k+1}]$,
and
	\begin{align}\label{dt:l2:now}\notag
		 \Sob{(J_h\bdy_t\eta)(t)}{H^{-\gam/2}}^2\leq&
          C_0\left(\frac{2\pi}{h}\right)^2\kap^2
		\Sob{\eta(t)}{H^{\gam/2}}^2
		+C_0
		\left(\frac{2\pi}{h}\right)^2\frac{M_{k+1}^2}{h^{4-\gam}}
			\Sob{\eta(t)}{L^2}^2
	\\ &\qquad\qquad
		+C_0
		\left(\frac{2\pi}{h}\right)^2\frac{\kap^2}{h^{\gam}}
		\left(M_k^2+R_{L^2}^2\right)
	\end{align}
holds for all $t\in I_{k+1}$.
\end{lem}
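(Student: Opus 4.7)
The plan is to apply $J_h$ directly to the evolution identity $\bdy_t\eta = f - \kap\Lam^\gam\eta - v\cdot\nabla\eta - \mu J_h^\de(\eta - \tht)$ obtained from \req{fb:sqg:avg}, then estimate each of the four resulting terms in the $\dot H^{-\gam/2}$ norm using the boundedness properties of $J_h$ collected in Section \ref{sect:inter}. The two bounds \req{dt:l2:past} and \req{dt:l2:now} differ only in the way the dissipative and advective contributions are estimated, depending on whether only the pointwise $L^2$ bound \req{mk:l2} is available (as on the interval $(-2\de,\de_{k+1}]$) or we additionally allow the instantaneous norm $\|\eta(t)\|_{\dot H^{\gam/2}}$ to appear on the right-hand side (as on $I_{k+1}$).

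For the dissipative contribution $\kap\|J_h\Lam^\gam\eta\|_{\dot H^{-\gam/2}}$, we would invoke \req{TI:8} with $(\rho,\be,\be')=(-\gam/2,\gam,\gam/2)$ in the ``now'' case, yielding the first term of \req{dt:l2:now}, and with $(\rho,\be,\be')=(-\gam/2,\gam,0)$ in the ``past'' case, where trading the half derivative costs an additional factor $h^{-\gam/2}$. In both cases the prefactor $C_I(\gam/2,h)=C(2\pi/h)$ supplies the required $(2\pi/h)^2$ after squaring. For the advection, the key step is to use that $v=\Ri^\perp\eta$ is divergence free to rewrite $v\cdot\nabla\eta=\nabla\cdot(v\eta)$; \req{TI:9} with $\ell=1$ and $\rho=-\gam/2$ then gives $\|J_h\nabla\cdot(v\eta)\|_{\dot H^{-\gam/2}}\le C(2\pi/h)h^{\gam/2-2}\|v\eta\|_{L^1}$, while Cauchy--Schwarz together with the $L^2$-boundedness of the Riesz transform yields $\|v\eta\|_{L^1}\le C\|\eta\|_{L^2}^2$. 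On $I_{k+1}$ the resulting quartic contribution is split as $M_{k+1}^2\cdot\|\eta(t)\|_{L^2}^2$, producing the second term of \req{dt:l2:now}; on the past interval it is bounded by $M_k^4$ and absorbed into the factor $(1+M_k^2+R_{L^2}^2)^2$.

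The feedback term is handled by Minkowski's inequality in time applied to the definition \req{lp:interpolant} of $J_h^\de$, followed by \req{TI:5} with $(\rho,\be)=(-\gam/2,0)$ and \req{Ih:base0}, giving the pointwise bound $\|J_hJ_h(\eta-\tht)(s)\|_{\dot H^{-\gam/2}}\le C(2\pi/h)h^{\gam/2}(\|\eta(s)\|_{L^2}+\|\tht(s)\|_{L^2})$; squaring and multiplying by $\mu^2$, the hypothesis \req{mu:cond:dt:l2} in the form $\mu^2 h^\gam\le C\kap^2/h^\gam$ converts the $\mu^2$ prefactor into the desired $\kap^2/h^\gam$. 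The forcing is treated similarly via \req{TI:5}, together with the observation that \req{mu:cond:dt:l2} and the definition of $R_{L^2}^2$ in \req{M2:def:final} imply $\kap^2F_{H^{-\gam/2}}^2\le C(\kap^2/h^\gam)R_{L^2}^2$, placing every term on a common $\kap^2/h^\gam$ scale.

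The hard part will be the bookkeeping, not any single estimate: the bounds must be assembled into the precise factored forms appearing on the right of \req{dt:l2:past} and \req{dt:l2:now}. Isolating the instantaneous norms in \req{dt:l2:now} is essential so that they can later be absorbed by the energy inequality used in the proof of Proposition \ref{prop:l2:avg}. On the past interval, where only the hypothesis \req{mk:l2} is available, the quartic dependence on $M_k$ coming from the advection estimate must be packaged, together with the extra $h^{-\gam/2}$ cost from the dissipation estimate, into the composite factor $(1+\kap^{-1}h^{\gam-2})^2(1+M_k^2+R_{L^2}^2)^2$ that appears in \req{dt:l2:past}.
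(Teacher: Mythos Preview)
Your proposal is correct and follows essentially the same approach as the paper: apply $J_h$ to the equation, bound each of the five resulting terms in $\dot H^{-\gam/2}$ using \req{TI:8} for the dissipation, \req{TI:9} for the advection (after rewriting $v\cdot\nabla\eta=\nabla\cdot(v\eta)$), and \req{TI:5} for the forcing and feedback, then square and absorb the factor $(1+\mu h^\gam/\kap)$ via \req{mu:cond:dt:l2}. The paper carries out exactly this computation, with the same case split between the ``past'' interval (using only $\|\eta\|_{L^2}\le M_k$) and the ``now'' interval (retaining $\|\eta(t)\|_{H^{\gam/2}}$ and $\|\eta(t)\|_{L^2}$ explicitly).
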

\begin{proof}
{\eric By} $(H1)$ we have $\gam/2<1${\eric. Therefore,
by~(\ref{typeI:const11}), see also} \req{typeI:const}, we have
	\begin{align}
		C_I(\gam/2,h)=C\left(\frac{2\pi}h\right).\notag
	\end{align}
{\eric Now, applying} $J_h$ to \req{fb:sqg:prelim},
using the fact that $v$ is {\eric divergence free}, and then taking the
{\eric $H^{-\gamma/2}$-norm} we have
	\begin{align}\label{dt:est:a}\notag
		\Sob{J_h\bdy_t\eta}{H^{-\gam/2}}&\leq
            {{{\kap}}}\Sob{J_h\Lam^\gam\eta}{H^{-\gam/2}}
				+\Sob{J_h\del\cdotp(v\eta)}{H^{-\gam/2}}\\
		&\qquad\qquad+
            \Sob{J_hf}{{\eric H^{-\gamma/2}}}+{{{\mu}}}\Sob{J_h^{{\de}}\eta}{H^{-\gam/2}}
				+{{{\mu}}}\Sob{J_h^\de\tht_j}{H^{-\gam/2}}.
	\end{align}
By $(H1), (H7)$, \req{TI:8}, \req{fgam}, \req{tht:bounds}, and \req{mk:l2}
we may estimate
	\begin{align}
		\kap\Sob{J_h\Lam^\gam\eta(t)}{H^{-\gam/2}}&\leq
                C \left(\frac{2\pi}h\right)\kap h^{-\gam/2}\Sob{\eta(t)}{L^2}\notag\\
		&\leq C\left(\frac{2\pi}h\right)\kap h^{-\gam/2} M_k,\quad t\in(-2\de,\de_{k+1}],\notag\\
                \kap\Sob{J_h\Lam^\gam\eta(t)}{H^{-\gam/2}}&\leq C
                 \left(\frac{2\pi}h\right)\kap \Sob{\eta(t)}{H^{\gam/2}},\quad t\in I_{k+1}\notag\\
		\Sob{J_hf}{H^{-\gam/2}}&\leq C\left(\frac{2\pi}h\right)\kap F_{H^{-\gam/2}},\notag\\
        {{{\mu}}}\Sob{J^{{\de}}_h\eta(t)}{H^{-\gam/2}}&\leq
                C\left(\frac{2\pi}h\right)\mu h^{\gam/2}\bigg(\sup_{s\in(-2\de,\de_{k+1}]}\Sob{\eta(s)}{L^2}\bigg)\notag\\
		&\leq C{\left(\frac{2\pi}h\right)}\mu h^{\gam/2}M_k,\quad t\in(-2\de, \de_{k+2}],\notag\\
	\mu\Sob{J_h^{\de}\tht(t)}{H^{-\gam/2}}&\leq C\left(\frac{2\pi}h\right)\mu h^{\gam/2}\Tht_{L^2},\quad t>-2\de.\notag
	\end{align}
For the quadratic term {\eric apply
\req{TI:9}, the Cauchy--Schwarz} inequality {\eric and} the
fact that~$\Ri^\perp$ is a
bounded operator in $L^2$ to estimate
	\begin{align}\notag
		\Sob{J_h\del\cdotp(v\eta)}{H^{-\gam/2}}\leq C
                    {\left(\frac{2\pi}h\right)}h^{-2+\gam/2}\Sob{v\eta}{L^1}\leq
                    C{\left(\frac{2\pi}h\right)}h^{-2+\gam/2}M_k^2,\quad t\in(-2\de,\de_{k+1}],
	\end{align}
and
	\begin{align}\notag
		\Sob{J_h\del\cdotp(v\eta)}{H^{-\gam/2}}\leq C{\left(\frac{2\pi}h\right)}h^{-2+\gam/2}M_{k+1}\Sob{\eta(t)}{L^2},\quad t\in I_{k+1}.
	\end{align}
	
Upon collecting these estimates, returning to \req{dt:est:a}, we apply \req{tht:bounds}, and \req{M2:def:final} to obtain
	\begin{align}
		 \Sob{(J_h\bdy_t\eta)(t)}{H^{-\gam/2}}\leq C
             {\left(\frac{2\pi}h\right)}\left(1+\frac{\mu h^\gam}{\kap}\right)^2\frac{\kap}{h^{\gam/2}}\left(1+\frac{1}{\kap}\frac{1}{h^{2-\gam}}\right)\left(1+M_k+R_{L^2}\right)^2,\notag
	\end{align}
for $t\in(-2\de, \de_{k+1}]$, as well as
	\begin{align}
		\Sob{(J_h\bdy_t\eta)(t)}{H^{-\gam/2}}\notag
		&\leq
             C{\left(\frac{2\pi}h\right)}\kap\Sob{\eta(t)}{H^{\gam/2}}
+C{\left(\frac{2\pi}h\right)}\frac{M_{k+1}}{h^{2-\gam/2}}
			\Sob{\eta(t)}{L^2}
\\ &\qquad
	+C{\left(\frac{2\pi}h\right)}\frac{\kap}{h^{\gam/2}}
		\left(1+\frac{\mu h^{\gam}}{\kap}\right)^2
		\left(M_k+R_{L^2}\right),\notag
	\end{align}
for $t\in I_{k+1}$.  
{\eric Note that in collecting the terms we have used the fact that all
constants and variables have been non-dimensionalized so that, for
example, terms such as $1+1/(\kappa h^{2-\gamma})$
and $1+M_k+R_{L^2}$ make sense.}
Thus, upon squaring both sides of
these inequalities, then applying Young's inequality and \req{mu:cond:dt:l2}, we arrive at \req{dt:l2:past} and \req{dt:l2:now}.
\end{proof}

\subsubsection{Growth during initial transient period}\label{sect:l2:trans}
Due to the delay, we must quantify bounds over the initial transient
period during which the feedback effects from large scales can amplify the solution.
{\eric Consider the definition of $\Gamma_{2,k}$ for $k=-1,0,1,\ldots$ given
by (\ref{Gamk}).}
Observe that
	\begin{align}\label{rmk:gam}
		\Gam_{2,k-1}\leq\Gam_{2,k},\quad k\geq0.
	\end{align}
By \req{gam:bounds}, Lemma \ref{prop:bad}, and Corollary
\ref{coro:bad} we have
	\begin{align}\notag
		\Sob{\eta(t)}{L^2}^2+{{\kap}}
			\int_{\de_k}^t\Sob{\eta(s)}{H^{\gam/2}}^2\ ds\leq
                {\eric \Gam_{2,k} ^2},\quad t\in I_{k},\quad k=-1,0,1.
	\end{align}
It then follows from \req{rmk:gam} that
	\begin{align}\label{M:til1}
		\Sob{\eta(t)}{L^2}^2+{{\kap}}
			\int_{\de_k}^t\Sob{\eta(s)}{H^{\gam/2}}^2\ ds\leq
                {\eric \Gam_{2,1} ^2\leq\Gam_{2,1}^2}+\rho,
                \quad t\in I_k,\quad k=-1,0,1,
	\end{align}
for any $\rho\geq0$.

As we will see, the choice of $\rho$ will be dictated by the estimates
{\eric \req{l2:prop:est} and \req{Mtil:k} below.}
In anticipation of this, {\eric consider the third definition
of (\ref{M2:def:final}) given by}
	\begin{align}\label{M2:def}
		M_{L^2}^2:=\Gam_{2,1}^2+8R_{L^2}^2.
	\end{align}
Then \req{M:til1} implies
	\begin{align}\label{M2:avg}
		\Sob{\eta(t)}{L^2}^2+\frac{{\kap}}2
			\int_{\de_k}^t\Sob{\eta(s)}{H^{\gam/2}}^2\ ds\leq
                M_{L^2} ^2,\quad t\in I_k,\quad k=-1,0,1.
	\end{align}
Therefore, the {\eric conclusion} of Proposition \ref{prop:l2:avg} is that there is a
choice of $\rho$ such that the bound {\eric given by}
\req{M2:avg} propagates beyond the initial transient period,
{\eric provided that}
${\de}$ is chosen small enough.  In
particular, Proposition \ref{prop:l2:avg} provides a more precise version of
\req{M2:avg}, which not only allows this bound to propagate through
all times $t>2{\de}$, but {\eric in such a way that it}
eventually ``forgets" the initializing
function, $g$, as well.

We are {\eric now} ready to prove Proposition \ref{prop:l2:avg}.

\subsubsection{Proof of Proposition \ref{prop:l2:avg}}\label{sect:l2:avg}

We proceed by induction on {\eric $k$}.
{\eric As we shall see shortly}, by
Lemma~\ref{gron}~(ii), it suffices to show {\eric for $k\ge 2$
and $t\in I_k$} that
	\begin{align}\label{indbound}
		\Sob{\eta(t)}{L^2}^2&+\frac{{{\kap}}}2
			\int_{{{\de}}_{k}}^te^{-({{{\mu}}}/2)(t-s)}\Sob{\eta(t)}
            {H^{\gam/2}}^2\ ds\notag\\&\leq\left(\Sob{\eta({{\de}}_k)}{L^2}^2
				-8{R_{L^2}^2}\right)e^{-({{{\mu}}}/2)(t-{{\de}}_{k})}
                    +8{R_{L^2}^2}.
	\end{align}
We proceed in {\eric three steps}.
Step~I proves the base case when $k=2$
while Step~II provides the induction step thereby completing
the induction.
Finally, Step~III
uses (\ref{indbound}) along with Lemma~\ref{gron}~(ii) to obtain
(\ref{m2:full}) and (\ref{mgam2:avg}) which finishes the proof.

{\eric {\flushleft\textbf{I. Base case.}}
Let $k=2$ and suppose $t\in I_2$.
By Corollary \ref{coro:bad} and \req{M2:avg}} we have
	\begin{align}
		\Sob{\eta(t)}{L^2}^2 \leq
                \Gam_{2,1}^2+C\frac{{\de}{\mu}^2}{{\kap}}\left(\Gam_{2,1}^2+\til{R}_{L^2}
                ^2\right)=\Gam_{2,2}^2,\quad t\in I_2.\notag
	\end{align}
It then follows from \req{M2:avg} and the second condition of
\req{de:small:l2} that
	\begin{align}\label{bound1}
		\Sob{\eta(t)}{L^2}^2\leq\Gam_{2,2}^2\leq
                M_{L^2}^2,\quad t\in(-2{\de}, 3{\de}].
	\end{align}
%
{\eric Multiply} \req{fb:k} by $\eta$, integrate over $\T^2$, and apply
\req{mvt} to obtain
$$
	\frac{1}2\frac{d}{dt}\Sob{\eta}{L^2}^2
	+{{\kap}}\Sob{\Lam^{\gam/2}\eta}{L^2}^2+{{{\mu}}}
          \Sob{\eta}{L^2}^2 = {\eric \I_1+\I_2+\I_3+\I_4}
$$
{\eric where
$$
	\I_1= \int f\eta\ dx,\qquad
	\I_2={{{\mu}}}\int
          (\eta-J_h\eta)\eta\ dx,\qquad
	\I_4= {{{\mu}}}\int (J_h^\de\tht)
          \eta\ dx
$$
and
$$
	\I_3= \frac{\mu}{\de}
          \int\int_{t-2\de}^{t-\de}\int_s^t[
          J_h\bdy_ \tau\eta(\tau)]\eta(t)\ d\tau dsdx.
$$}
%
Observe that by \req{TI},
Cauchy-Schwarz inequality,  Young's inequality, and \req{tht:bc} we have
\begin{align*}
		\I_1&\leq\frac{1}{{{{\kap}}}}\Sob{\Lam^{-\gam/2}f}{L^2}^2+\frac{{{{\kap}}}}{4}\Sob{\Lam^{\gam/
                    2}\eta}{L^2}^2,\\
                 \I_2 &\leq C{{{\mu}}}h^{\gam/2}\Sob{\Lam^{\gam/2}\eta}{L^2}\Sob{\eta}{L^2}\leq
                \frac{{{{\kap}}}}8\Sob{\Lam^{\gam/2}\eta}{L^2}^2+Ch^{\gam}\frac{{{{\mu}}}^2}{{{{\kap}}}}
                \Sob{\eta}{L^2}^2
\intertext{and}
                \I_4&\leq
                    {{{\mu}}}\Sob{J_h^{{\de}}\tht}{L^2}\Sob{\eta}{L^2}\leq
                    C{{{{\mu}}}}\Tht_{L^2}^2+\frac{{{{\mu}}}}{4}\Sob{\eta}{L^2}^2.
\end{align*}
{\eric Further estimating $\I_1, \I_2$ and $\I_4$ using}
\req{m:mu:avg:l2}, \req{tht:bc} and \req{M2:def:final} gives
	\begin{align}\label{balance:bc}
		\frac{1}2\frac{d}{dt}\Sob{\eta}{L^2}^2+\frac{5{{{\kap}}}}{8}\Sob{\Lam^{\gam/2}\eta}{L^2}^2+
                \frac{{{{\mu}}}}2\Sob{\eta}{L^2}^2\leq{\mu}R_{L^2}^2+\I_3.
	\end{align}
{\eric To estimate $\I_3$,} apply Fubini's theorem, Parseval's theorem, the Cauchy-Schwarz inequality, \req{Ih:base0}, and Young
inequalities {\eric in the} following sequence
of estimates
	\begin{align}\label{mvt:bc}
		\I_3&\leq \frac{\mu}{\de}\int_{t-2{{\de}}}^{t-{{\de}}}\int_s^t\left|\int
                (J_h\bdy_\tau
                \eta(x,\tau))(\eta(x,t))\ dx\right|d\tau\ ds\notag\\ &\leq
                C{{{\mu}}}\frac{1}{{{\de}}}\int_{t-2{{\de}}}^{t-{{\de}}}
			{\eric (t-s)}
		\left(\int_s^t\Sob{J_h\bdy_\tau\eta(\tau)}{H^{-\gam/2}}^2\ d\tau\right)^{1/2}\ ds\Sob{\eta(t)}{H^{\gam/2}}\notag\\
                  &\leq
                C{{{\mu}}}\left(\frac{1}{{{\de}}}\int_{t-2{{\de}}}^{t-{{\de}}}(t-s)
                \int_{t-2{{\de}}}^t\Sob{J_h\bdy_\tau\eta(\tau)}{H^{-\gam/2}}^2\ d\tau\ ds\right)^{1/2}\Sob{\eta(t)}
                    {H^{\gam/2}}\notag\\
                    &\leq
                    \frac{1}2{\left({C}\frac{\de{{{\mu}^2}}}{{{{\kap}}}}\int_{t-2{{\de}}}^t\Sob{J_h\bdy_s\eta(s)}{H^{-\gam/2}}^2\ ds\right)}+\frac{{{{\kap}}}}8\Sob{\eta(t)}
                         {H^{\gam/2}}^2.
	\end{align}
Let
	\begin{align}\label{*:L2}
		\mystara(t):={{C}\frac{\de{{{\mu}^2}}}{{{{\kap}}}}
	\int_{t-2{{\de}}}^t\Sob{J_h\bdy_s\eta(s)}{H^{-\gam/2}}^2\ ds}.
	\end{align}
Observe that $\mystara(t)=\mystara_0+\mystara_1+\mystara_2(t)$,
where for $\ell\geq0$, we {\eric have defined}
	\begin{align}\label{star:kt}
		\mystara_\ell(t):={C{{\de}}\frac{{{{\mu}^2}}}{{{{\kap}}}}\int_{{\de}_\ell}^{t}
            \Sob{J_h\bdy_s\eta(s)}{H^{-\gam/2}}^2\ ds}
\qquad\hbox{and}\qquad
	\mystara_\ell:=\mystara_\ell(\delta_{\ell+1}).
	\end{align}
Returning to \req{balance:bc} and applying \req{mvt:bc} and \req{m:mu:avg:l2}, we have
	\begin{align}\label{balance:bc2}
	\frac{d}{dt}\Sob{\eta(t)}{L^2}^2
		+{{{{\kap}}}}\Sob{\eta(t)}{H^{\gam/2}}^2
		+{{{{\mu}}}}\Sob{\eta(t)} {L^2}^2
	&\leq
    \frac{2}{c_0}\frac{\kap}{h^{\gam}}{R_{L^2}^2}
		+\mystara_0+\mystara_1+\mystara_2(t).
	\end{align}
{\eric To obtain bounds on $\mystara_0$ and $\mystara_1$ define
\begin{align}\label{Bm}
	O_1({\de}^2):=
		C{\de}^2\frac{\kap^3}{h^{3\gam}}\left(
			\frac{2\pi}{h}\right)^2\left(1
				+\frac{1}{\kap}\frac{1}{h^{2-\gam}}\right)^2
			\left(1+M_{L^2}^2+R_{L^2}^2\right)^2
\end{align}
so that, upon simplifying \req{dt:l2:past} with \req{m:mu:avg:l2}, we
obtain from Lemma \ref{lem:dt} and \req{bound1} that}
	\begin{align}\label{dt:I01}
		{\eric \max\{\mystara_0, \mystara_1\}} \leq O_1({\de}^2).
	\end{align}
{\eric To bound $\mystara_2(t)$ for} $t\in I_2$,
observe that by Lemma \ref{lem:dt} and \req{bound1} we have
	\begin{align}\label{dt:I2}
		\mystara_2(t)\leq
          C_1(h){\de}\int_{{\de}_2}^t\Sob{\eta(s)}{L^2}^2\ ds+C_2(h){\de}\int_{{\de}_2}^t
          \Sob{\eta(s)}{H^{\gam/2}}^2\ ds+O_2({\de}^2).
	\end{align}
where, upon simplifying \req{dt:l2:now} with \req{m:mu:avg:l2},
we {\eric have defined}
\begin{align*}
	C_1(h):=
		C\kap\left(\frac{2\pi}{h}\right)^2\frac{M_{L^2}^2}{h^{4+\gam}},
\qquad
   C_2(h):=C\frac{\kap^3}{h^{2\gam}}\left(\frac{2\pi}{h}\right)^2
\end{align*}
{\eric and}
$$
                O_2({\de}^2):=C\de^2\frac{\kap^3}{h^{3\gam}}\left(\frac{2\pi}{h}\right)^2\left(M_{L^2}^2+R_{L^2}^2\right).
$$
Combining \req{dt:I01} and \req{dt:I2} then gives
	\begin{align}\label{dt:est}
		\mystara(t)\leq
          C_1(h){\de}\int_{{\de}_2}^t\Sob{\eta(s)}{H^{\gam/2}}^2\ ds+C_2(h){\de}\int_{{\de}_2}^t
          \Sob{\eta(s)}{L^2}^2\ ds+O_1({\de}^2)+O_2({\de}^2).
	\end{align}
Observe that since $O_2({\de}^2)\leq O_1({\de}^2)$, it follows from
the third condition on $\de$ in \req{de:small:l2} that
	\begin{align}\notag
		O_1({\de}^2)+O_2({\de}^2)\leq \frac{2}{c_0}\frac{\kap}{h^\gam}R_{L^2}^2.
	\end{align}
Thus, upon returning to \req{mvt:bc}, we have
	\begin{align}\notag
		{\eric I_3}\leq
                C_1(h){\de}\int_{{\de}_2}^t\Sob{\eta(s)}{H^{\gam/2}}^2\ ds+C_2(h){\de}\int_{{\de}_2}^t
                \Sob{\eta(s)}{L^2}^2\ ds+\frac{2}{c_0}\frac{\kap}{h^\gam}
                R_{L^2}^2+\frac{{{{\kap}}}}8\Sob{\eta(t)}{H^{\gam/2}}^2.
	\end{align}
By applying {\eric the resulting bounds on $\mystara(t)$ in \req{balance:bc2}}, we have
{\eric for $t\in I_2$ that}
	\begin{align}\label{balance:bc4}
		\frac{d}{dt}\Sob{\eta}{L^2}^2&+{{{{\mu}}}}\Sob{\eta}{L^2}^2+{{{{\kap}}}}\Sob{\eta}{H^{\gam/2}}^2\notag\\
	&\leq
                \frac{4}{c_0}\frac{\kap}{h^\gam}{R_{L^2}^2}+C_1(h){\de}\int_{{\de}_2}^t\Sob{\eta(s)}{L^2}^2\ ds+C_2(h){\de}\int_{{\de}_2}^t
                \Sob{\eta(s)}{H^{\gam/2}}^2\ ds.
	\end{align}
	
Now observe that \req{gron:cond:l2} ensures that \req{de:cond:gron} holds in Lemma \ref{gron} with
	\[
		a=\mu, \quad b={{{\kap}}}, \quad A=C_1, \quad B=C_2,
                \quad F=\frac{4}{c_0}{{{\frac{\kap}{h^\gam}}}}{R_{L^2}^2}.
	\]
Applying Lemma \ref{gron} (i) then gives
	\begin{align}\label{l2:prop:est}
		\Sob{\eta(t)}{L^2}^2&+\frac{{\kap}}2\int_{{\de}_2}^te^{-({\mu}/2)(t-s)}\Sob{\eta(s)}{H^{\gam/2}}
                ^2\ ds\notag\\&\leq
                \left(\Sob{\eta({\de}_2)}{L^2}^2-8{R_{L^2}^2}\right)e^{-({{{\mu}}}/2)(t-{\de}_2)}+8{R_{L^2}^2},
                \quad t\in I_2,
	\end{align}
{\eric which finishes the proof of the base case.}

{\eric {\flushleft{\textbf{II. Induction Step.}}} Suppose
$k\geq2$ and} for each $\ell=2,\dots, k$ {\eric and $t\in I_\ell$ that}
	\begin{align}\label{induct:hyp}
		\Sob{\eta(t)}{L^2}^2+\frac{{{\kap}}}2\int_{{\de}_{\ell}}^te^{-({{{\mu}}}/2)(t-s)}\Sob{\eta(s)}
                    {H^{\gam/2}}^2\ ds\leq\left(\Sob{\eta({\de}_\ell)}{L^2}^2-8{R_{L^2}^2}\right)e^{-({{{\mu}}}/2)(t-{\de}_\ell)}
                    +8{R_{L^2}^2}
	\end{align}
We show the bound corresponding {\eric to $\ell=k+1$} holds
{\eric for $t\in I_{k+1}$.}

{\eric As already demonstrated, our choice of $\delta$ has been chosen
so that the hypotheses of Lemma~\ref{gron} hold for the differential
inequality (\ref{balance:bc4}).  These hypotheses are also satisfied
for the modified inequality obtained by replacing $\delta_2$ by $\delta_\ell$
for $\ell=2,\ldots,k$
which we write as
\begin{align}\label{balance:bcl}
		\frac{d}{dt}\Sob{\eta}{L^2}^2&+{{{{\mu}}}}\Sob{\eta}{L^2}^2+{{{{\kap}}}}\Sob{\eta}{H^{\gam/2}}^2\notag\\
	&\leq
                \frac{4}{c_0}\frac{\kap}{h^\gam}{R_{L^2}^2}+C_1(h){\de}\int_{{\de}_\ell}^t\Sob{\eta(s)}{L^2}^2\ ds+C_2(h){\de}\int_{{\de}_\ell}^t
                \Sob{\eta(s)}{H^{\gam/2}}^2\ ds
	\end{align}
for $t\in I_\ell$.
Now,
dropping the integral in (\ref{induct:hyp}) and rewriting the
last term yields
$$
 \|\eta(t)\|_{L^2}^2
\le \|\eta(\delta_\ell)\|_{L^2}^2 e^{-(\mu/2)(t-\delta_\ell)}
	+8R_{L^2}^2\int_{\delta_\ell}^{t} {2\over\mu}e^{-(\mu/2)(t-s)} ds
\quad\hbox{for}\quad t\in I_\ell
$$
so that by iterating part (ii) of Lemma~\ref{gron}
for $\ell=2,\ldots,k$ we obtain
$$
 \|\eta(t)\|_{L^2}^2
\le \|\eta(\delta_2)\|_{L^2}^2 e^{-(\mu/2)(t-\delta_2)}
	+8R_{L^2}^2\int_{\delta_2}^{t} {2\over\mu}e^{-(\mu/2)(t-s)} ds
\quad\hbox{for}\quad t\in (\delta_2,\delta_{k+1}].
$$
Since $\|\eta(\delta_2)\|_{L^2}^2 \le \Gamma_{2,1}^2$
by (\ref{M:til1}), we immediately obtain (\ref{m2:full}) and
in particular that
	\begin{align}\label{l2:ind:apriori}
		\Sob{\eta(t)}{L^2}^2\leq \Gam_{2,1}^2+8{R_{L^2}^2}=M_{L^2}^2,
                \quad t\in ({\de}_{2},{\de}_{k+1}].
	\end{align}
By Corollary \ref{coro:bad} it follows that
	\begin{align}\notag
		\Sob{\eta(t)}{L^2}^2\leq
               M_{L^2}^2+C\frac{{\de}{\mu}^2}{{\kap}}
                \left(M_{L^2}^2+\til{R}_{L^2}^2\right),\quad
                t\in I_{k+1}.
	\end{align}
Thus, by the second condition in \req{de:small:l2} we have
	\begin{align}\label{Mtil:k}
		\Sob{\eta(t)}{L^2}^2+\frac{\kap}4\int_{\de_{k+1}}^t\Sob{\eta(s)}{H^{\gam/2}}\ ds\leq
                M_{L^2} ^2,\quad t\in I_{k+1}.
	\end{align}}

Now proceed exactly as in the base case, this time making use of the bounds
\req{l2:ind:apriori} and \req{Mtil:k}.
Indeed, we may derive \req{balance:bc2} as before.
Then, since $t\in I_{k+1}$, we may split the time integral over
three regions:
	\begin{align}\notag
		\int_{t-2\de}^{{t}}\leq
                \int_{I_{k-1}}+\int_{I_k}+\int_{{\de}_{k+1}}^t.
	\end{align}

Over $I_{k-1}$ and $I_k$, Lemma \ref{lem:dt} and \req{Mtil:k} imply
\req{dt:I01} for $\mystara_{k-1}$ and $\mystara_k$.
Over $I_{k+1}$, we have \req{Mtil:k}, so that Lemma \ref{lem:dt}
implies \req{dt:I2} for $\mystara_{k+1}(t)$.
We then deduce \req{dt:est} for $t\in I_{k+1}$, which leads to the
differential inequality \req{balance:bcl} with $\ell=k+1$.
Applying Lemma~\ref{gron} (i) as before then
yields
	\begin{align}\label{Ikp1:l2}
		\Sob{\eta(t)}{L^2}^2&+\frac{{{\kap}}}2\int_{{\de}_{k+1}}^te^{-({{{\mu}}}/2)(t-s)}\Sob{\eta(s)}
                    {H^{\gam/2}}^2\ ds\notag\\&\leq\left(\Sob{\eta({\de}_{k+1})}{L^2}^2-8{R_{L^2}^2}\right)e^{-({{{\mu}}}/2)(t-{\de}_{k
                        +1})}+8{R_{L^2}^2}
	\end{align}
{for $t\in I_{k+1}$ thus completing the induction.}

{\eric \flushleft{\bf III.  Finish the Proof.}
We have already obtained (\ref{m2:full}) for all values of $k$
by iterating Lemma~\ref{gron} (ii)
as part of the induction step.
To obtain (\ref{mgam2:avg}) drop the first term in (\ref{indbound})
and keep the integral.  Consequently,
we may then deduce that
	\begin{align}
		\frac{{{\kap}}}2
                e^{-({{{\mu}}}/2)(t-{\de}_k)}\int_{{\de}_k}^t\Sob{\eta(s)}{H^{\gam/2}}^2\ ds\leq
                \left(\Sob{\eta({\de}_k)}{L^2}^2-8{R_{L^2}^2}\right)e^{-({{{\mu}}}/2)(t-{\de}_k)}+8{R_{L^2}^2},\quad
                t\in I_k.\notag
	\end{align}
Since the first condition in \req{de:small:l2} and \req{m:mu:avg:l2} together
imply $e^{(\mu/2)\de}\leq2$, it follows from \req{M2:def:final} and \req{M:til1} that
	\begin{align}\label{m2gam2}
		\frac{{{\kap}}}4
                \int_{I_k}\Sob{\eta(s)}{H^{\gam/2}}^2\ ds&\leq
                \Sob{\eta({\de}_k)}{L^2}
                ^2+8{R_{L^2}^2}(e^{({{{\mu}}}/2){\de}}-1)\leq
                M_{L^2}^2.
	\end{align}
This completes the proof.
}\qed

\begin{rmk}\label{rmk:l2vs}
We point out that the energy estimates in $L^p$ and $H^\s$ will not proceed along these lines, the reason being that even if one were to do so, the resulting bounds would still not be independent of $h$.  So long as these bounds are uniform-in-time, however, we will be able to use them strengthen the topology of convergence in which the synchronization takes place via interpolation.  We will thus be content with rather modest bounds in $L^p$ and $H^\s$.
\end{rmk}

\subsection{$L^2$ to $L^p$ uniform bounds}

We will prove the following ``good" bound:

\begin{prop}\label{prop:lp:avg}
Let $F_{L^p}, \Tht_{L^p}, M_{L^2}$ be given by \req{fp},
\req{tht:bounds}, \req{M2:def:final}, respectively.  Define
	\begin{align}\label{Rp:til}
		{{\til{R}_{L^p}}}^p(h):=F_{L^p}^p+\Tht_{L^p}^p+\til{C}(h,p)^pM_{L^2}^p,
	\end{align}
where
	\begin{align}\label{Ctil}
		\til{C}(h,p)^p:=1+h^{-(p-2)}.
	\end{align}
Let $c_0>0$ be any constant.  Suppose that
	\begin{align}\label{mu:hgam:lp}
		\frac{\mu h^\gam}{\kap}\leq \frac{1}{c_0}.
	\end{align}
Then there exists a constant $C_0>0$, depending on $c_0$, such that
	\begin{align}\notag
		\Sob{\eta(t)}{L^p}^p\leq
                \left(\Gam_{L^p}^p-\left(\frac{C_0}{h^{\gam}}\right)^p{{\til{R}_{L^p}^p}}
                \right)e^{-{{{\kap}}}
                  t}+\left(\frac{C_0}{h^{\gam}}\right)^p{{\til{R}_{L^p}}}^p,
                \quad t\geq0,
	\end{align}
In particular,
	\begin{align}\notag
		\Sob{\eta(t)}{L^p}\leq \til{M}_{L^p},\quad t\geq0,
	\end{align}
where
	\begin{align}\label{Mp:def}
		\til{M}_{L^p}(h)^p:=\Gam_{L^p}^p+\left(\frac{C_0}{h^{\gam}}\right)^p\til{R}_{L^p}
                ^p(h).
	\end{align}
\end{prop}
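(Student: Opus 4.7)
The plan is to perform a standard $L^p$-energy estimate on $\eta$ satisfying \req{fb:sqg:avg}. This argument is significantly simpler than the $L^2$ case, because we are content with $h$-dependent bounds and have no need to control temporal oscillations; as Remark \ref{rmk:l2vs} anticipates, the $L^p$ bound will subsequently be exploited only through interpolation. Multiplying the equation by $|\eta|^{p-2}\eta$ and integrating over $\T^2$, the advective term vanishes after integration by parts because $v=\Ri^\perp\eta$ is divergence-free. The dissipative term is handled via Proposition \ref{lb}, giving
\[
	\kap\int_{\T^2}|\eta|^{p-2}\eta\,\Lam^\gam\eta\,dx
	\geq \frac{2\kap}{p}\|\Lam^{\gam/2}(|\eta|^{p/2-1}\eta)\|_{L^2}^2,
\]
which combined with a Poincar\'e-type bound applied after subtracting the (now-nonzero) mean of $|\eta|^{p/2-1}\eta$ produces a coercive contribution $c_p\kap\|\eta\|_{L^p}^p$ on the left-hand side.

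For the right-hand side, H\"older's inequality bounds the external force $\int f|\eta|^{p-2}\eta\,dx$ together with each of the two feedback contributions by $\|\eta\|_{L^p}^{p-1}$ times the $L^p$ norm of the remaining factor. The observation term $\mu\int J_h^\de\tht\cdot|\eta|^{p-2}\eta\,dx$ is controlled via the uniform $L^p$ boundedness \req{Ih:base0} of $J_h$ together with \req{tht:bounds}, yielding $\|J_h^\de\tht\|_{L^p}\leq C\Tht_{L^p}$. The critical estimate is for the $\eta$-feedback term: since Proposition \ref{prop:l2:avg} supplies only a uniform $L^2$ bound $\|\eta\|_{L^2}\leq M_{L^2}$, I invoke \req{Ih:base1} with $q=2$ and then time-average, obtaining
\[
	\|J_h^\de\eta(t)\|_{L^p}\leq Ch^{2/p-1}M_{L^2}.
\]
Raising to the $p$-th power produces the factor $h^{-(p-2)}$ appearing in $\til{C}(h,p)^p$, with the constant $1$ in $\til{C}(h,p)^p=1+h^{-(p-2)}$ reflecting the alternative direct estimate via \req{Ih:base0}.

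Applying Young's inequality with conjugate exponents $p$ and $p/(p-1)$ absorbs half of $c_p\kap\|\eta\|_{L^p}^p$ back into the left-hand side and produces remainders of the form $\mu(\mu/\kap)^{p-1}(F_{L^p}^p+\Tht_{L^p}^p+\til{C}(h,p)^pM_{L^2}^p)$. The smallness assumption \req{mu:hgam:lp} then converts the prefactor $\mu(\mu/\kap)^{p-1}$ into $\kap(C_0/h^\gam)^p$ up to a constant, yielding a differential inequality of the form
\[
	\frac{d}{dt}\|\eta\|_{L^p}^p + c_p\kap\|\eta\|_{L^p}^p
	\leq \kap\left(\frac{C_0}{h^\gam}\right)^p\til{R}_{L^p}^p,
\]
to which Gr\"onwall's inequality applies, using the initial datum $\|\eta(0)\|_{L^p}=\|g(0)\|_{L^p}\leq\Gam_{L^p}$, to give the stated bound. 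The main obstacle lies in the coercivity step: Proposition \ref{lb} provides a lower bound in terms of $\|\Lam^{\gam/2}(|\eta|^{p/2-1}\eta)\|_{L^2}^2$, but since $|\eta|^{p/2-1}\eta$ need not be mean-zero even when $\eta$ is, the Poincar\'e inequality \req{pineq} does not apply directly; a mean-subtraction manipulation or alternative spectral-gap argument is required to close that step and secure the dissipative lower bound on $\|\eta\|_{L^p}^p$.
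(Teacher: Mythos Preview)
Your approach is essentially the paper's, and you have correctly identified the one nontrivial point: extracting a coercive $\kap\Sob{\eta}{L^p}^p$ from $\Sob{\Lam^{\gam/2}(\eta^{p/2})}{L^2}^2$ when $\eta^{p/2}$ is not mean-zero. The paper resolves this explicitly, and in doing so also accounts for the ``$1$'' in $\til{C}(h,p)^p$, which does \emph{not} come from an alternative application of \req{Ih:base0} as you guessed. The argument is: writing $\langle\eta^{p/2}\rangle_{\T^2}$ for the average, one has
\[
	\Sob{\eta}{L^p}^p-(4\pi^2)^{-1}\Sob{\eta}{L^{p/2}}^p
	=\Sob{\eta^{p/2}-\langle\eta^{p/2}\rangle_{\T^2}}{L^2}^2
	\leq C\Sob{\Lam^{\gam/2}(\eta^{p/2})}{L^2}^2,
\]
so the dissipation controls $\Sob{\eta}{L^p}^p$ modulo the lower-order piece $\Sob{\eta}{L^{p/2}}^p$. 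That piece is handled by interpolating $L^{p/2}$ between $L^p$ and $L^1$, then using $\Sob{\eta}{L^1}\leq C\Sob{\eta}{L^2}\leq CM_{L^2}$ from Proposition \ref{prop:l2:avg}:
\[
	\Sob{\eta}{L^{p/2}}^p\leq \Sob{\eta}{L^p}^{\frac{p(p-2)}{p-1}}\Sob{\eta}{L^1}^{\frac{p}{p-1}}
	\leq C^p M_{L^2}^p+\pi^2\Sob{\eta}{L^p}^p.
\]
The $\Sob{\eta}{L^p}^p$ term is absorbed on the left, and the residual $M_{L^2}^p$ (with no negative power of $h$) is precisely the source of the ``$1$'' in $\til{C}(h,p)^p=1+h^{-(p-2)}$. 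With this gap closed, the rest of your sketch---Young's inequality to produce the $(\mu/\kap)^p$ prefactor, the use of \req{mu:hgam:lp} to convert it to $h^{-\gam p}$, and Gronwall---matches the paper line by line.
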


\begin{proof}
Observe that by \req{gam:bounds}, we have
	\begin{align}\notag
		\Sob{\eta(t)}{L^p}\leq \Gam_{L^p},\quad t\in I_{-1}.
	\end{align}
For $t\geq0$, the evolution of $\Sob{\eta(t)}{L^p}$ is obtained by
multiplying \req{fb:sqg:avg} by $\eta| \eta|^{p-2}$, integrating over
$\T^2$, applying {Proposition} \ref{lb}, H\"older's inequality, Young's
inequality, and \req{fp} to obtain
	\begin{align}\label{balance:lp}
		\frac{1}p\frac{d}{dt}\Sob{\eta}{L^p}^p+\frac{2\kap}p\Sob{\Lam^{\gam/2}(\eta^{p/2})}{L^2}
                ^2\leq{C^p}\frac{\kap}pF_{L^p}^p+C^p\frac{\kap}p\left(\frac{\mu}{\kap}\right)^p\left(\Sob{J_h^\de\eta}
                     {L^p}^p+\Sob{J_h^\de\tht}{L^p}^p\right)+\frac{\kap}{2p}\Sob{\eta}{L^p}^p
	\end{align}
Applying H\"older's inequality, the Fubini-Tonelli theorem, \req{Ih:base1}
{\eric with $q=2$}, and Proposition \ref{prop:l2:avg} we have
	\begin{align}\label{Sm}
		\Sob{J_h^\de\eta}{L^p}^p
		\leq
			{{1\over\delta}\int_{t-2\delta}^{t-\delta}
				\Sob{J_h\eta(s)}{L^p}^p ds }
		\leq
                C^ph^{2-p}M_{L^2}^p.
		\end{align}
Similarly, by \req{tht:bc}, $\Sob{J_h^\de\tht}{L^p}^p\leq {C_J^p}\Tht_{L^p}^p $.
{Upon defining
$$
	\langle \eta^{p/2}\rangle_{\T^2} = {1\over 4\pi^2} \int_{\T^2}
			\eta^{p/2}dx
$$
observe} that
	\begin{align}\label{poin:avg}
		\Sob{\eta}{L^p}^p-(4\pi^2)^{-1}\Sob{\eta}{L^{p/2}}^p=\Sob{\eta^{p/2}-
	{\langle \eta^{p/2}\rangle_{\T^2}}}{L^2}
                ^2\leq C(2\pi)^{\gam}\Sob{\Lam^{\gam/2}(\eta^{p/2})}{L^2}^2.
	\end{align}
{Note that the constant $(2\pi)^\gam$ carries the units of $L^{\gam}$; however, as $L=2\pi$ throughout this paper we avoid keeping
track of the dimensions in this case, and simply denote the prefactor $C(2\pi)^\gam$ by $C$.}
By interpolation, Young's inequality, and H\"older's inequality we
have
	\begin{align}\label{p2:inter}
		\Sob{\eta}{L^{p/2}}^p\leq
                \Sob{\eta}{L^{p}}^{\frac{p(p-2)}{p-1}}\Sob{\eta}{L^1}^{\frac{p}{p-1}}
                \leq
                C^p\left(\frac{p-2}{p-1}\right)^{p-2}M_{L^2}^{p}+{\pi^2}\Sob{\eta}{L^p}^p.
	\end{align}
Upon combining \req{Sm}, \req{poin:avg}, \req{p2:inter}, \req{Ctil}
and returning to \req{balance:lp}, we arrive at
	\begin{align}\notag
		\frac{d}{dt}\Sob{\eta}{L^p}^p+\kap\Sob{\eta}{L^p}^p\leq
                C^p\frac{\kap}{h^{\gam p}}\left(\frac{\mu h^\gam}{\kap}\right)^p
                \left(F_{L^p}^p+\til{C}(h,p)^pM_{L^2}^p+\Tht_{L^p}^p\right).
	\end{align}
An application of \req{mu:hgam:lp} and Gronwall's inequality completes the proof.
\end{proof}

\subsection{Uniform $H^\s$-estimates}\label{sect:hs:avg}
As in the previous section, we obtain ``good" $H^\s$-bounds without appealing to time-derivative estimates. 

\begin{prop}\label{prop:bad:hs}
Let $M_{L^2}$ be given by
\req{M2:def} {\eric and} let $\Tht_{H^\s}, \til{M}_{L^p}$ be given by \req{sqg:hs:ball},
\req{Mp:def}, respectively.  Define
	\begin{align}\label{xi}
		\til{\Xi}_{L^p}(h):=\left(\frac{\til{M}_{L^p}(h)}{{\kap}}\right)^{\frac{\s}{\gam-1-2/p}},
	\end{align}
as well as
	\begin{align}\label{Rtilsig}
		F_{H^{\s-\gam/2}}:=\frac{1}{{\kap}}\Sob{f}{H^{\s-\gam/2}}
                {\eric \quad\hbox{and}\quad}
 R_{H^\s}^2:=F_{H^{\s-\gam/2}}^2+\Tht_{L^2}^2.
	\end{align}
Let $c_0>0$ be the {\eric constant given in Proposition \ref{prop:l2:avg}.}
Suppose that
	\begin{align}\label{mu:hgam:hs}
		\frac{\mu h^\gam}{\kap}\leq \frac{1}{c_0}.
	\end{align}
Then there exists a
constant $C_0>0$, depending on $c_0$, such that
	\begin{align}\notag
		\Sob{\eta(t)}{H^\s}^2
			&\leq \Gam_{H^\s}^2e^{-\kap t}+C_0\left[\left(\til{\Xi}_{L^p}^{\frac{2\s+\gam}\s}+\frac{1}{h^{2\gam}}\right)M_{L^2}^2+\frac{1}{h^{2\gam}}R_{H^\s}^2\right]\left(1-e^{-\kap t}\right),\notag
	\end{align}
holds for $t\geq0$ and $\s\leq\gam/2$, and
	\begin{align}\notag
		\Sob{\eta(t)}{H^\s}^2
			&\leq \Gam_{H^\s}^2e^{-\kap t}+C_0\left[\left(\til{\Xi}_{L^p}^{\frac{2\s+\gam}\s}+\frac{1}{h^{2\s+\gam}}\right)M_{L^2}^2+\frac{1}{h^{2\s+\gam}}+\frac{1}{h^{2\s+\gam}}R_{H^\s}^2\right]\left(1-e^{-\kap t}\right),\notag
	\end{align}
holds for $t\geq0$ and $\s>\gam/2$.
\end{prop}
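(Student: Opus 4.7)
The plan is to perform a higher-order energy estimate on (\ref{fb:sqg:avg}) by testing with $\Lam^{2\s}\eta$, which, using that $v$ is divergence-free, yields the balance
\[
\tfrac{1}{2}\tfrac{d}{dt}\Sob{\eta}{\dot H^\s}^2+\kap\Sob{\eta}{\dot H^{\s+\gam/2}}^2=\langle f,\Lam^{2\s}\eta\rangle-\mu\langle J_h^\de(\eta-\tht),\Lam^{2\s}\eta\rangle-\langle v\cdot\del\eta,\Lam^{2\s}\eta\rangle.
\]
I would absorb each term on the right into the dissipation, modulo a source term, and then apply Gronwall's inequality with the initial datum $\Sob{g(0)}{H^\s}\le\Gam_{H^\s}$ supplied by (\ref{gam:bounds}), using the uniform-in-time bounds for $\Sob{\eta}{L^2}$ and $\Sob{\eta}{L^p}$ from Propositions \ref{prop:l2:avg} and \ref{prop:lp:avg}.

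For the transport term I would apply the Kato--Ponce product estimate (Proposition \ref{prod:rule}) together with the boundedness of $\Ri^\perp$ on $L^p$ to control $|\langle v\cdot\del\eta,\Lam^{2\s}\eta\rangle|$ by a product of $\Sob{\eta}{L^p}$ with $\dot{H}$-norms of $\eta$ at an intermediate order, then interpolate that intermediate factor between $\Sob{\eta}{L^2}$ and $\Sob{\eta}{\dot{H}^{\s+\gam/2}}$ via Proposition \ref{interpol}. Young's inequality absorbs one copy of the top order into the dissipation and leaves a residual proportional to $\kap\til\Xi_{L^p}^{(2\s+\gam)/\s}M_{L^2}^2$; the particular exponent $(2\s+\gam)/\s$ is exactly what the optimization produces once the interpolation indices are matched against the range $1-\s<2/p<\gam-1$ imposed in (H3). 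For the forcing term, Cauchy--Schwarz and Young yield $|\langle f,\Lam^{2\s}\eta\rangle|\le\tfrac{\kap}{4}\Sob{\eta}{\dot H^{\s+\gam/2}}^2+\kap F_{H^{\s-\gam/2}}^2$, which contributes to the $R_{H^\s}^2$ component.

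The dichotomy between the two regimes of $\s$ arises solely from how one handles the feedback term $\mu\langle J_h^\de(\eta-\tht),\Lam^{2\s}\eta\rangle$. When $\s\le\gam/2$ one has $2\s\le\s+\gam/2$, so duality combined with the Poincar\'e inequality (\ref{pineq}) permits the pairing against the dissipation using only the $L^2$-boundedness (\ref{Ih:base0}) of $J_h^\de$, producing a coefficient of the form $(\mu^2/\kap)(M_{L^2}^2+R_{H^\s}^2)$; invoking (\ref{mu:hgam:hs}) to exchange $\mu^2/\kap$ for $\kap h^{-2\gam}$ reproduces the $h^{-2\gam}$ prefactor in the first advertised bound. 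When $\s>\gam/2$ this pairing is no longer available, so instead I would split the derivatives as $\langle\Lam^{\s-\gam/2}J_h^\de(\eta-\tht),\Lam^{\s+\gam/2}\eta\rangle$ and invoke the boundedness property (\ref{TI:3}) with $\rho=\s-\gam/2$ and $\be=0$; after Young and (\ref{mu:hgam:hs}) the surplus $h$-powers collapse to exactly $h^{-2\s-\gam}$.

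Combining all of the above produces a differential inequality of the form $\tfrac{d}{dt}\Sob{\eta}{\dot H^\s}^2+\kap\Sob{\eta}{\dot H^{\s+\gam/2}}^2\le K(h,\s)$. Applying the Poincar\'e inequality (\ref{pineq}) once more replaces $\dot H^{\s+\gam/2}$ by $\dot H^{\s}$ on the left at no loss, and integrating via Gronwall's lemma from $t=0$ gives the two stated bounds. The main obstacle is calibrating the Kato--Ponce step for the nonlinear term so that the uniform $L^p$-bound interpolates cleanly to the exponent $(2\s+\gam)/\s$ on $\til\Xi_{L^p}$, while simultaneously verifying that the admissible H\"older exponents in Proposition \ref{prod:rule} remain compatible with (H3); the remaining estimates are routine.
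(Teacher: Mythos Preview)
Your proposal is essentially correct and mirrors the paper's own argument: the same $\Lam^{2\s}$-energy balance, the same forcing estimate, the same two-case treatment of the feedback term via \req{Ih:base0}/Poincar\'e when $\s\le\gam/2$ and via \req{TI:3} when $\s>\gam/2$, followed by Poincar\'e and Gronwall. The only notable discrepancy is in the nonlinear term: the paper does not invoke the product rule of Proposition~\ref{prod:rule} directly but instead cites the SQG-specific interpolation estimate from \cite{const:wu:qgwksol, jmt}, obtaining first a bound $|\J_1|\le C\Sob{\eta}{H^{\s+\gam/2}}^{(2\s-(\gam-1-2/p))/\s}\Sob{\eta}{H^{\gam/2}}^{(\gam-1-2/p)/\s}\Sob{\eta}{L^p}$ and then interpolating the $H^{\gam/2}$ factor between $L^2$ and $H^{\s+\gam/2}$ in a second step; this two-stage interpolation is what produces the exponent $(2\s+\gam)/\s$, and a bare application of the product rule~\req{prod:rule} to $v\eta$ (without passing through the commutator $[\Lam^\s,v\cdot\del]$ and the associated cancellation) will not close at the right regularity.
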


\begin{proof}
Suppose $t\geq0$.  We multiply \req{fb:sqg:avg} by
$\Lam^{2\s}\eta$ and integrate over $\T^2$ to obtain
	\begin{align}\label{balance:hs}
		\frac{1}2\frac{d}{dt}&\Sob{\eta}{H^\s}^2
		+{{{\kap}}}\Sob{\eta}{H^{\s+\gam/2}}^2\notag\\
                &=-\int {v}\cdotp{\del}\eta\Lam^{2\s}\eta\ dx+\int
                f\Lam^{2\s}\eta\ dx+{{{\mu}}} \int
                J_h^{{\de}}\eta\Lam^{2\s}\eta\ dx+{{{\mu}}}\int
                J_h^{{\de}}\tht\Lam^{2\s}\eta\ dx\notag\\ &=
				\J_1 + \J_2 + \J_3 + \J_4.
	\end{align}
We estimate $\J_1$ with H\"older's inequality, interpolation, and Young's inequality as in
\cite{const:wu:qgwksol, jmt}, and invoke \req{xi} to obtain
	\begin{align}
		|\J_1|&\leq
                C\Sob{\Lam^{\s+\gam/2}\eta}{L^2}^{\frac{2\s-(\gam-1-2/p)}{\s}}\Sob{\Lam^{\gam/
                    2}\eta}{L^2}^{\frac{\gam-1-2/p}\s}\Sob{\eta}{L^{p}}\notag
                    	\leq
                \frac{{{{\kap}}}}{\eric 10}\Sob{\eta}{H^{\s+\gam/2}}^2+
                     {C}\til{\Xi}_{L^p}^2({{{\kap}}}\Sob{\eta}{H^{\gam/2}}^2).\notag
	\end{align}
Note that {$(H1)$, $(H2)$ and $(H3)$ are needed
for} the interpolation.  We interpolate once more to obtain
	\begin{align}\notag
		\Sob{\eta}{H^{\gam/2}}\leq C\Sob{\eta}{H^{\s+\gam/2}}^{\frac{\gam/2}{\s+\gam/2}}\Sob{\eta}{L^2}^{\frac{\s}{\s+\gam/2}}.
	\end{align}
Thus, by Young's inequality, we have
	\begin{align}
		C\til{\Xi}_{L^p}^2(\kap\Sob{\eta}{H^{\gam/2}}^2)\leq C\kap \Sob{\eta}{H^{\s+\gam/2}}^{\frac{\gam}{\s+\gam/2}}(\til{\Xi}_{L^p}^2\Sob{\eta}{L^2}^{\frac{2\s}{\s+\gam/2}})\leq \frac{\kap}{\eric 10}\Sob{\eta}{H^{\s+\gam/2}}^2+C\kap\til{\Xi}_{L^p}^{2+\gam/\s}M_{L^2}^2.\notag
	\end{align}

For
$\J_2$, we make the familiar estimate through Parseval's theorem,
the Cauchy-Schwarz inequality, and then \req{Rtilsig} to obtain
	\begin{align}\notag
		|\J_2|&\leq \kap
                F_{H^{\s-\gam/2}}^2+\frac{{{{\kap}}}}{\eric 10}\Sob{\eta}{H^{\s+\gam/2}}^2.
	\end{align}
For $\J_3$ and $\J_4$, we consider two cases: $\s\leq\gam/2$ and $\s>\gam/2$.

{\flushleft\textit{Case: $\s\leq\gam/2$:}}
It follows from Fubini's theorem, H\"older's inequality, \req{Ih:base0}, and the Poincar\`e inequality that 
	\begin{align}\notag
		\left|\int
                J_h^{{\de}}\eta\Lam^{2\s}\eta\ dx\right|&\leq
                \frac{1}\de\int_{t-2\de}^{t-\de}\Sob{J_h
                  \eta(s)}{L^2}\Sob{\eta(t)}{H^{2\s}}\ ds\notag\\
                  	&\leq\left(\sup_{s\in I_{k-2}\cup I_{k-1}}\Sob{\eta(s)}{L^2}\right)\Sob{\eta(t)}{H^{\s+\gam/2}}\notag\\
			&\leq CM_{L^2}\Sob{\eta(t)}{H^{\s+\gam/2}}\notag
	\end{align}
Thus, by Young's inequality we have
	\begin{align}\notag
		 |\J_3|&\leq C\frac{{\mu}^2}{{\kap}}M_{L^2}^2
                 +\frac{{\kap}}{\eric 10}\Sob{\eta}{H^{\s+\gam/2}}^2.
	\end{align}
Similarly, since $\tht_{-2{\de}}\in\mathcal{B}_{L^2}$ by $(H5)$, by
\req{sqg:hs:ball} we have
	\[
	|\J_4|\leq C\frac{{\mu}^2}{{\kap}}\Tht_{L^2}^2
	+\frac{{\kap}}{\eric 10}\Sob{\eta}{H^{\s+\gam/2}}^2.
	\]
	
Therefore, upon returning to \req{balance:hs}, then applying the estimates
for {$\J_1$ through $\J_4$} and the Poincar\'e inequality gives
	\begin{align}
		\frac{d}{dt}\Sob{\eta}{H^\s}^2+{{{{\kap}}}}\Sob{\eta}{H^{\s}}^2\leq
                {\eric 8}{\kap}F_{{H^{\s-
                    \gam/2}}}^2+C\kap\til{\Xi}_{L^p}^{2+\gam/\s}M_{L^2}^2+{C}\frac{{{{\mu}}}^2
                }{{{{\kap}}}}\left(M_{L^2}^2+\Tht_{L^2}^2\right).\notag
	\end{align}
Then the Gronwall inequality implies
	\begin{align}
		\Sob{\eta(t)}{H^\s}^2&+\int_0^te^{-\kap(t-s)}\Sob{\eta(s)}{H^{\s+\gam/2}}^2\ ds\notag\\
			&\leq \Gam_{H^\s}^2e^{-\kap t}+C\left[\left(\til{\Xi}_{L^p}^{2+\gam/\s}+\frac{1}{h^{2\gam}}\right)M_{L^2}^2+\frac{1}{h^{2\gam}}R_{H^\s}^2\right]\left(1-e^{-\kap t}\right),\notag
	\end{align}
as desired.

{\flushleft{\textit{Case: $\s>\gam/2$:}}}
Observe that by Fubini's theorem,
Plancherel's theorem, H\"older's inequality,  {\eric \req{TI:3}},
Proposition \ref{prop:l2:avg},
 and Young's inequality we have
	\begin{align}
		|\J_3|&\leq \mu\Sob{J_h^\de\eta(t)}{H^{\s-\gam/2}}\Sob{\eta(t)}{H^{\s+\gam/2}}\notag\\
		&\leq\mu\left(\frac{1}{\de}\int_{t-2\de}^{t-\de}\Sob{J_h\eta(s)}{H^{\s-\gam/2}}\ ds\right)\Sob{\eta(t)}{H^{\s+\gam/2}}\notag\\
		&\leq C\mu h^{-(\s-\gam/2)}\left(\sup_{s\in I_{k-2}\cup I_{k-1}}\Sob{\eta(s)}{L^2}\right)\Sob{\eta(t)}{H^{\s+\gam/2}}\notag\\
		&\leq C\frac{\mu}{h^{2\s}}\frac{\mu h^{\gam}}{\kap}M_{L^2}^2+\frac{\kap}{\eric 10}\Sob{\eta(t)}{H^{\s+\gam/2}}^2.\notag
	\end{align}
Similarly, since $\tht_{-2{\de}}\in\mathcal{B}_{L^2}$ by $(H5)$, by
\req{sqg:hs:ball} we have
	\[
	|\J_4|\leq C\frac{\mu}{h^{2\s}}\frac{\mu h^{\gam}}{\kap}\Tht_{L^2}^2+\frac{\kap}{\eric 10}\Sob{\eta(t)}{H^{\s+\gam/2}}^2.
	\]

Therefore, upon returning to \req{balance:hs}, then applying the estimates
for $\J_1$ through $\J_4$ and the Poincar\'e inequality gives
	\begin{align}
		\frac{d}{dt}\Sob{\eta}{H^\s}^2+{{{{\kap}}}}\Sob{\eta}{H^{\s}}^2\leq
                {\eric 8}{\kap}F_{H^{\s+
                    \gam/2}}^2+C\kap\til{\Xi}_{L^p}^{2+\gam/\s}M_{L^2}^2+C\frac{\mu^2}{h^{2\s-\gam}\kap}\left(M_{L^2}^2+\Tht_{L^2}^2\right),\notag
	\end{align}
Then the Gronwall inequality and \req{mu:hgam:hs} implies
	\begin{align}
		\Sob{\eta(t)}{H^\s}^2&+\int_0^te^{-\kap(t-s)}\Sob{\eta(s)}{H^{\s+\gam/2}}^2\ ds\notag\\
			&\leq \Gam_{H^\s}^2e^{-\kap t}+C\left[F_{H^{\s-\gam/2}}^2+\left(\til{\Xi}_{L^p}^{\frac{2\s+\gam}\s}+\frac{1}{h^{2\s+\gam}}\right)M_{L^2}^2+\frac{1}{h^{2\s+\gam}}\Tht_{L^2}^2\right]\left(1-e^{-\kap t}\right),\notag
	\end{align}
as desired.
\end{proof}

\section{Proof of Theorem \ref{thm2}}\label{sect:pf2}

We are left to establish the synchronization of $\eta$ to the
reference solution $\tht$.  We point out that the uniform $L^2$ bounds will be used in a {crucial} way to establish
suitable control on the time derivative and guarantee synchronization in a rather weak topology, i.e., the $H^{-1/2}$ topology.  We then make use of the uniform $L^p$ and $H^\s$-bounds in order to
strengthen the regularity of the convergence of the synchronization by
interpolation.

Consider the difference $\z:=\eta-\tht$, where $\tht\in
\mathcal{B}_{H^\s}$ and $\eta$ is the unique strong solution of
\req{fb:sqg:avg}.  Observe that \req{gam:bounds} ensures that $\z$ is
defined for $t \in I_{-1}$.  The evolution of $\z$ is given by:
	\begin{align}\label{z:eqn2}
		\begin{cases}
			&\bdy_t\z+{{{\kap}}}\Lam^\gam\z+w\cdotp{\del}\z+w\cdotp{\del}
                  \tht+u\cdotp{\del}\z=-{{{\mu}}}
                  J_h^{{\de}}\z,\\
                  &w=\Ri^\perp\z,\quad \z(t)=g(t)-\tht(t),\quad
                  t\in(-2{\de},0].
		\end{cases}
	\end{align}
It will be convenient to work at the regularity level of the stream function of $\z$.  Thus, we define
	\begin{align}\label{stream:def}
		\psi:=-\Lam^{-1}\z.
	\end{align}


\subsection{Synchronization}
Our main claim is the following.

\begin{prop}\label{prop:synch}
Let $\Tht_{H^\s}, \Tht_{L^2}, \Tht_{L^p}$ {\eric and} $M_{L^2}$ be given
by \req{sqg:hs:ball}, \req{tht:bounds} and \req{M2:def:final}.  Define
	\begin{align}\label{xi:tht}
		\Xi_{L^p}:=\left(\frac{{\Tht}_{L^p}}{{\kap}}\right)^{\frac{\gam/2}{\gam-1-2/p}},\qquad {\Psi}:=4\sqrt{2}M_{L^2},
	\end{align}
	\begin{align}\label{c1c2a}
		\til{C}_1(h)&:=\kap^3\left(\frac{2\pi}h\right)^2\left(\frac{1}{h^{1+3\gam}}+\frac{1}{\kap^2}\frac{1}{h^{4+\gam}}\right)(1+M_{L^2}^2+\Tht_{L^2}^2)
	\end{align}
and
	\begin{align}\label{c1c2b}
\til{C}_2(h)&:=\left(\frac{2\pi}{h}\right)^2\frac{1}{h^{4+\gam}}\left(M_{L^2}^2+\Tht_{L^2}^2\right).
	\end{align}
There exist constants {$c_0, c_0',c_1,c_2\ge 1$} such that if $h, \mu$ satisfies
	\begin{align}\label{sync:cond}
		\frac{1}{c_0'}\Xi_{L^p}^2\leq\frac{\mu}{\kap}\leq \frac{1}{c_0}h^{-\gam}.
	\end{align}
and $\de$ is chosen to satisfy
	\begin{align}\label{de:sync:avg}
		\frac{1}{\kap}\frac{\de^2\til{C}_1(h)+\de\til{C}_2(h)}{\Xi_{L^p}^2}\leq\frac{c_0'}{c_1}\quad\text{and}\quad \de\leq\frac{1}{c_2^{1/2}}\min\left\{c_1^{1/2},\frac{h^\gam}{\kap},\frac{\kap^{1/2}}{\til{C}_2(h)^{1/2}}\right\}.
	\end{align}
then
	\begin{align}\label{sync:claim}
		\Sob{\psi(t)}{H^{1/2}}^2\leq \Psi^2
                e^{-({{{\mu}}}/4)(t-{\de})},\quad t\geq 2{\de}.
	\end{align}
\end{prop}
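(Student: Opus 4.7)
I would work with the stream function $\psi = -\Lambda^{-1}\zeta$ where $\zeta = \eta - \theta$, so that $\|\psi\|_{H^{1/2}} = \|\zeta\|_{H^{-1/2}}$, and run the energy method at the $H^{-1/2}$ level by pairing the difference equation \req{z:eqn2} with $\Lambda^{-1}\zeta$. The resulting balance has the form
$$
\tfrac{1}{2}\tfrac{d}{dt}\|\zeta\|_{H^{-1/2}}^2 + \kappa\|\zeta\|_{H^{(\gamma-1)/2}}^2 + \mu\|\zeta\|_{H^{-1/2}}^2 = (\text{trilinear terms}) + \mu\lb \zeta - J_h^{\de}\zeta,\,\Lambda^{-1}\zeta\rb ,
$$
where the coercive $+\mu\|\zeta\|_{H^{-1/2}}^2$ is obtained by splitting $J_h^{\de}\zeta = \zeta - (\zeta - J_h^{\de}\zeta)$. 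The strategy then mimics the architecture of Proposition \ref{prop:l2:avg}: establish the bound on each interval $I_k$ by induction, using the Gronwall Lemma \ref{gron} to absorb a non-local memory term created by the time-averaging.

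The trilinear terms $\int w\cdotp\del\zeta\,\Lambda^{-1}\zeta\,dx$, $\int u\cdotp\del\zeta\,\Lambda^{-1}\zeta\,dx$, and the crucial $\int w\cdotp\del\theta\,\Lambda^{-1}\zeta\,dx$ are estimated via H\"older's inequality, the fractional product rule of Proposition \ref{prod:rule}, and interpolation (Proposition \ref{interpol}), in the same style as in \cite{const:wu:qgwksol} and as used for $\J_1$ in Proposition \ref{prop:bad:hs}. The second term is bounded in terms of $\Tht_{L^p}$ and produces the constant $\Xi_{L^p}$ of \req{xi:tht}; it is absorbable into the dissipation provided $\mu/\kappa \gtrsim \Xi_{L^p}^2$, which is exactly the lower bound in \req{sync:cond}.

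For the feedback error, apply the identity \req{mvt} to $\zeta$:
$$
\zeta - J_h^{\de}\zeta = (\zeta - J_h\zeta) + \frac{1}{\de}\int_{t-2\de}^{t-\de}\!\int_s^t J_h\bdy_\tau \zeta\,d\tau\,ds .
$$
The first summand is controlled by $Ch^{1/2}\|\zeta\|_{L^2}$ through \req{TI}. For the second, I would prove an analogue of Lemma \ref{lem:dt} bounding $\|J_h\bdy_t\zeta\|_{H^{-\gamma/2}}$: apply $J_h$ to \req{z:eqn2} and estimate each resulting piece via \req{TI:3}--\req{TI:9}, exploiting the uniform $L^2$, $L^p$ and $H^\s$ bounds on $\eta$ from Propositions \ref{prop:l2:avg}--\ref{prop:bad:hs} together with \req{tht:bounds}--\req{sqg:hs:ball} for $\theta$. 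This yields, after Young's inequality and Cauchy-Schwarz in time (exactly as in the derivation of \req{mvt:bc}--\req{dt:est}), a non-local bound with prefactor $\de^2\mu^2/\kappa$ and coefficients precisely $\til{C}_1(h)$ and $\til{C}_2(h)$ of \req{c1c2a}--\req{c1c2b}.

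The closing differential inequality takes the form
$$
\tfrac{d}{dt}\|\zeta\|_{H^{-1/2}}^2 + \tfrac{\mu}{2}\|\zeta\|_{H^{-1/2}}^2 \ls A_1\de\!\int_{\de_k}^t\|\zeta\|_{H^{-1/2}}^2\,ds + A_2\de\!\int_{\de_k}^t\|\zeta\|_{H^{(\gamma-1)/2}}^2\,ds + \text{(small)},
$$
to which the non-local Gronwall inequality Lemma \ref{gron}(i) applies once the smallness conditions \req{de:sync:avg} on $\de$ are imposed; iterating via Lemma \ref{gron}(ii) across the intervals $I_k$ propagates the exponential decay $e^{-(\mu/4)(t-\de)}$. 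The initializing constant $\Psi = 4\sqrt{2}\,M_{L^2}$ is the $L^2$ size of $\zeta$ on the transient window $(-2\de, 2\de]$, which comes from combining \req{gam:bounds} with \req{tht:bounds} and the uniform bound \req{m2:full}. I expect the main obstacle to be Step 2 (the temporal-oscillation term): the time-derivative estimate for $\zeta$ cannot be decoupled from the full nonlinearity of $\eta$, which is why the elaborate uniform $L^2 \to L^p \to H^\s$ bootstrap of the preceding subsections is indispensable, and why the refined boundedness properties \req{TI:3}--\req{TI:9} on $J_h$ beyond the standard \req{Ih:base0}--\req{TI} are essential for converting the memory term into the dissipation-compatible norm $H^{-\gamma/2}$.
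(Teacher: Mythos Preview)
Your overall architecture is correct and closely mirrors the paper's proof: work at the $H^{-1/2}$ level via the stream function, split the feedback term using \req{mvt}, control the memory piece with a time-derivative lemma for $J_h\bdy_t\zeta$ (this is the paper's Lemma \ref{lem:dt3}), and close by induction on the intervals $I_k$ using the non-local Gronwall inequality. The identification of $\Xi_{L^p}$ as the threshold constant from the $u\cdotp\nabla\zeta$ term is also right.

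However, there is a genuine gap in your treatment of the trilinear terms. You list three of them and propose to estimate each via H\"older, product rule, and interpolation in the style of $\J_1$ or $\K_1$. In fact the paper's balance \req{balance:sync} contains only \emph{one} trilinear term, $\K_1=\int(u\cdotp\nabla\psi)\zeta\,dx$, because the other two vanish identically by the pointwise orthogonality $\Ri^\perp\zeta\cdotp\Ri\zeta=0$. Concretely, since $\nabla\psi=\Ri\zeta$ and $w=\Ri^\perp\zeta$, integration by parts gives
\[
\int (w\cdotp\nabla\zeta)\,\Lam^{-1}\zeta\,dx
=-\int (w\cdotp\nabla\psi)\,\zeta\,dx
=-\int (\Ri^\perp\zeta\cdotp\Ri\zeta)\,\zeta\,dx=0,
\]
and similarly $\int(w\cdotp\nabla\tht)\Lam^{-1}\zeta\,dx=0$. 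Without this cancellation your plan cannot close: the cubic-in-$\zeta$ term, treated ``in the same style,'' would yield a bound $C\Sob{\zeta}{L^p}\Sob{\psi}{H^{1+1/p}}^2$, and the only available uniform control on $\Sob{\zeta}{L^p}$ is through $\til{M}_{L^p}(h)$ from Proposition~\ref{prop:lp:avg}, which blows up as $h\to0$ faster than $h^{-\gam}$ after the interpolation/Young step. The resulting coefficient in front of $\Sob{\psi}{H^{1/2}}^2$ would exceed the upper bound $\mu\le \kap h^{-\gam}/c_0$ in \req{sync:cond}, so it could not be absorbed. One can also check directly that at the $H^{-1/2}$ level a term of size $\Sob{\zeta}{L^3}^3$ cannot be split by Young into $\eps\Sob{\psi}{H^{(\gam+1)/2}}^2+C\Sob{\psi}{H^{1/2}}^2$ when $\gam<2$.

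Two smaller points. First, in the time-derivative estimate you only need the uniform $L^2$ bound $M_{L^2}$ (and $\Tht_{L^2}$); the $L^p$ and $H^\s$ bounds are not used here but only later, via interpolation, to upgrade the topology of convergence in Theorem~\ref{thm2}. Second, the induction in the paper is not a plain iteration of Lemma~\ref{gron}(ii): the hypothesis is carried with a half-shift, $\Sob{\psi(t)}{H^{1/2}}^2\le\Psi^2 e^{-(\mu/2)(t-\de_{k/2})}$, and Lemma~\ref{lem:prev:sync} is used to convert the past-interval contributions into a term of size $\til{O}(\de)\Psi^2 e^{-(\mu/2)(t-\de_{k/2})}$; the condition \req{de:sync:avg} then ensures $1+\de\til{O}(\de)\le e^{(\mu/4)\de}$, which is what buys the extra half-step and propagates the decay. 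Your sketch glosses over this mechanism.
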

	
To prove this, we proceed as in section \ref{sect:l2:avg} and make some preparatory estimates.

\subsubsection{Control of temporal oscillations at a fixed spatial scale}

\begin{lem}\label{lem:dt3}
Let $\Tht_{L^2}$ and $M_{L^2}$ be given by \req{tht:bounds}, \req{M2:def}, respectively.
Let $c_0>0$ be a constant.  Suppose that
	\begin{align}\label{mu:cond:synch:dt}
		\frac{\mu h^\gam}{\kap}\leq \frac{1}{c_0},
	\end{align}
Then there exists a constant $C_0>0$, depending on $c_0$, such that
	\begin{align}\label{dt:synch}
	\Sob{\bdy_tJ_h\z(t)}{H^{-\gam/2}}^2
	&\leq C_0\left(\frac{2\pi}h\right)^2\frac{\kap^2}{h^{1+\gam}}\left(\frac{1}{\de}\int_{t-2\de}^{t-\de}\Sob{\psi(s)}{H^{1/2}}^2\ ds\right)\notag\\
	&+ C_0\kap^2\left(\frac{2\pi}h\right)^2\left(M_{L^2}^2
		+\Tht_{L^2}^2\right)\left(\frac{1}{h^{\gam+1}}
		+\frac{1}{\kap^2}\frac{1}{h^{4-\gam}}\right)\Sob{\psi}{H^{1/2}}^2
		\\
	&+C_0\left(\frac{2\pi}h\right)^2\left(M_{L^2}^2+\Tht_{L^2}^2\right)h^{-(4-\gam)}\Sob{\psi}{H^{(\gam+1)/2}}^2,
\quad \hbox{for}\quad t>-2\de.\notag
	\end{align}
\end{lem}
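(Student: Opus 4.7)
The plan is to mimic the proof of Lemma \ref{lem:dt} but now at the level of the difference equation \req{z:eqn2}, working with the stream function $\psi=-\Lam^{-1}\z$ so that $\Sob{\z}{H^s}=\Sob{\psi}{H^{s+1}}$. I would apply $J_h$ to \req{z:eqn2}, observe that the forcing $f$ drops out of the difference, and take the $H^{-\gam/2}$ norm to obtain
\begin{align*}
	\Sob{\bdy_t J_h\z}{H^{-\gam/2}}
	&\leq \kap\Sob{J_h\Lam^\gam\z}{H^{-\gam/2}}
	+\Sob{J_h\del\cdot(w\z)}{H^{-\gam/2}} \\
	&\quad+\Sob{J_h\del\cdot(w\tht)}{H^{-\gam/2}}
	+\Sob{J_h\del\cdot(u\z)}{H^{-\gam/2}}
	+\mu\Sob{J_hJ_h^{\de}\z}{H^{-\gam/2}},
\end{align*}
where the divergence rewrites of the three nonlinear terms use that $u,w$ are Riesz-perpendicular of scalars and thus divergence-free.

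The dissipation term is handled by \req{TI:8} with $(\rho,\be,\be')=(-\gam/2,\gam,-1/2)$, since $\gam/2<1$ yields $C_I(\gam/2,h)=C(2\pi/h)$, giving $\kap\Sob{J_h\Lam^\gam\z}{H^{-\gam/2}}\leq C(2\pi/h)\kap h^{-(\gam+1)/2}\Sob{\psi}{H^{1/2}}$; this feeds into the second line of \req{dt:synch}. For the feedback term I would use that $J_hJ_h^{\de}\z(t)=\de^{-1}\int_{t-2\de}^{t-\de}J_h^2\z(s)\,ds$ and apply \req{TI:5} (twice, or once combined with the $L^2$-boundedness \req{Ih:base0} of the inner $J_h$) to get $\Sob{J_h^2\z(s)}{H^{-\gam/2}}\ls (2\pi/h)h^{(\gam-1)/2}\Sob{\psi(s)}{H^{1/2}}$, then Cauchy--Schwarz in time together with $\mu\leq \kap/(c_0 h^\gam)$ from \req{mu:cond:synch:dt} produces exactly the first term on the right of \req{dt:synch}.

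The three nonlinear terms are estimated in the same way: by \req{TI:9} with $(\rho,\ell)=(-\gam/2,1)$,
\[
	\Sob{J_h\del\cdot(w\z)}{H^{-\gam/2}}\leq C(2\pi/h)h^{\gam/2-2}\Sob{w\z}{L^1}\leq C(2\pi/h)h^{\gam/2-2}(M_{L^2}+\Tht_{L^2})\Sob{\z}{L^2},
\]
where the uniform $L^2$ bound on $w=\Ri^\perp\z$ follows from Proposition~\ref{prop:l2:avg} together with \req{tht:bounds}; the terms with $w\tht$ and $u\z$ are bounded identically using $\Sob{u}{L^2}\leq C\Tht_{L^2}$. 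Since $\Sob{\z}{L^2}=\Sob{\psi}{H^1}$ and $(H1)$ gives $1/2<1<(\gam+1)/2$, I would interpolate $\Sob{\psi}{H^1}^2\leq C\Sob{\psi}{H^{1/2}}^{2(\gam-1)/\gam}\Sob{\psi}{H^{(\gam+1)/2}}^{2/\gam}$ and split by Young's inequality with conjugate exponents $(\gam/(\gam-1),\gam)$, which distributes the prefactor $(2\pi/h)^2 h^{\gam-4}(M_{L^2}^2+\Tht_{L^2}^2)$ between the $\Sob{\psi}{H^{1/2}}^2$ and $\Sob{\psi}{H^{(\gam+1)/2}}^2$ contributions and yields the last two terms of \req{dt:synch}. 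Finally I would square, invoke $(a+b+c+d+e)^2\leq 5(a^2+\cdots+e^2)$, and absorb the dissipation-only $\kap^2/h^{\gam+1}$ contribution into the stated coefficient using the bound $1\leq 1+M_{L^2}^2+\Tht_{L^2}^2$.

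The main obstacle I expect is the bookkeeping of $(2\pi/h)$-losses coming from successive applications of $J_h$ in the feedback term (since $J_h$ need not be bounded on $H^{-\gam/2}$ without loss for local-average interpolants); one has to exploit \req{Ih:base0} for the inner $J_h$ in the composition $J_h^2$ to avoid accumulating an unnecessary power of $2\pi/h$, and also ensure that the Young-inequality split for $\Sob{\psi}{H^1}^2$ matches the exact exponent $h^{-(4-\gam)}$ appearing on $\Sob{\psi}{H^{(\gam+1)/2}}^2$ in the stated bound.
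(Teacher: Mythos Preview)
Your proposal is correct and follows the paper's proof essentially step for step: the paper applies $J_h$ to \req{z:eqn2}, bounds the dissipation via \req{TI:5} (your use of \req{TI:8} is equivalent), the feedback via \req{TI:5} together with Cauchy--Schwarz in time and \req{mu:cond:synch:dt}, and each nonlinear term via \req{TI:9} plus the uniform $L^2$ bound $\Sob{\z}{L^2}\leq M_{L^2}+\Tht_{L^2}$, leaving one remaining factor $\Sob{\z}{L^2}=\Sob{\psi}{H^1}$ which it simply bounds by $\Sob{\psi}{H^{(\gam+1)/2}}+\Sob{\psi}{H^{1/2}}$ before squaring (your interpolation/Young split with exponents $(\gam/(\gam-1),\gam)$ produces the same two contributions with the same prefactor). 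On the bookkeeping issue you flag: the paper's displayed proof in fact writes $\Sob{J_h\z(s)}{H^{-\gam/2}}$ inside the time integral rather than $\Sob{J_h^2\z(s)}{H^{-\gam/2}}$, so the extra $(2\pi/h)$-loss never enters its accounting; your observation about the dissipation-only $\kap^2h^{-(\gam+1)}$ contribution is likewise correct, and the missing ``$1+$'' in front of $M_{L^2}^2+\Tht_{L^2}^2$ resurfaces when the lemma is actually invoked in \req{c1c2a}.
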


\begin{proof}
Let $t>-2\de$.  {
Applying $J_h$ to \eqref{z:eqn2} and taking the $H^{-\gamma/2}$-norm yields
\begin{align*}
	\|\partial_t J_h\zeta\|_{H^{-\gamma/2}}
	&\le
		\kappa \|J_h\Gamma^{\gamma}\zeta\|_{H^{-\gamma/2}}
		+\mu\|J_h J_h^{\delta}\zeta\|_{H^{-\gamma/2}}\\
		&+\|J_h\nabla\cdot (w\zeta)\|_{H^{-\gamma/2}}
		+\|J_h\nabla\cdot (w\theta)\|_{H^{-\gamma/2}}
		+\|J_h\nabla\cdot (u\zeta)\|_{H^{-\gamma/2}}.
\end{align*}
}%
Observe that by $(H1)$, we have $\gam/2<1$, so that by \req{typeI:const},
we have
	\begin{align}\notag
		C_I(\gam/2,h)=C\left(\frac{2\pi}h\right).
	\end{align}

By \req{TI:5}, \req{tht:bounds}, \req{stream:def}, the Cauchy-Schwarz inequality, and \req{mu:cond:synch:dt} we have
	\begin{align}
		\kap\Sob{J_h\Lam^\gam\z(t)}{H^{-\gam/2}}&\leq C\kap\left(\frac{2\pi}h\right)h^{\gam/2-\gam-1/2}\Sob{\Lam^\gam\z}{H^{-\gam-1/2}}\notag\\
			&\leq C\kap\left(\frac{2\pi}h\right)h^{-(\gam+1)/2}\Sob{\psi}{H^{1/2}},\notag\\
		\mu\Sob{J_hJ_h^\de\z(t)}{H^{-\gam/2}}&\leq \frac{\mu}\de\int_{t-2\de}^{t-\de}\Sob{J_h\z(s)}{H^{-\gam/2}}\ ds\notag\\
			&\leq C\left(\frac{2\pi}h\right){\mu}h^{(\gam-1)/2}\left(\frac{1}{\de}\int_{t-2\de}^{t-\de}\Sob{\z(s)}{H^{-1/2}}\ ds\right)\notag\\
			 &\leq C\left(\frac{2\pi}h\right)\left(\frac{\mu}{\de^{1/2}}\right)h^{(\gam-1)/2}\left(\int_{t-2\de}^{t-\de}\Sob{\psi(s)}{H^{1/2}}^2\ ds\right)^{1/2}\notag\\
			 &\leq C\left(\frac{2\pi}h\right)\frac{\kap}{h^{(1+\gam)/2}}\left(\frac{1}{\de}\int_{t-2\de}^{t-\de}\Sob{\psi(s)}{H^{1/2}}^2\ ds\right)^{1/2}\notag.
	\end{align}

To estimate the nonlinear terms, we apply \req{TI:9}, the Cauchy-Schwarz inequality, \req{tht:bounds}, Proposition \ref{prop:l2:avg}, \req{stream:def}, interpolation, and Young's inequality.  For instance, we have
	\begin{align}
	\Sob{J_h\del\cdotp(w\z)}{H^{-\gam/2}}&\leq C\left(\frac{2\pi}{h}\right)h^{-2+\gam/2}\Sob{(\Ri^\perp\z)\z}{L^1}\notag\\
				&\leq C\left(\frac{2\pi}{h}\right)h^{-2+\gam/2}\Sob{\z}{L^2}^2\notag\\
			&\leq C\left(\frac{2\pi}h\right)h^{-2+\gam/2}(M_{L^2}+\Tht_{L^2})\left(\Sob{\psi}{H^{(\gam+1)/2}}+\Sob{\psi}{H^{1/2}}\right).\notag
	\end{align}
Similarly
	\begin{align}
	{\Sob{J_h\del\cdotp(w\tht)}{H^{-\gam/2}}},
	{\Sob{J_h\del\cdotp(u\z)}{H^{-\gam/2}}} \leq C\left(\frac{2\pi}{h}\right)h^{-2+\gam/2}\Tht_{L^2}\left(\Sob{\psi}{H^{(\gam+1)/2}}+\Sob{\psi}{H^{1/2}}\right)\notag.
	\end{align}
Therefore,
by summing each of these estimates, we arrive at \req{dt:synch}.
as desired.
\end{proof}

\subsubsection{Growth during transient period}
We introduce the following notation: Let $\al\in(0,1)$ and
$\ell\in\Z$, then define
	\begin{align}\notag
		\de_{\al \ell}:=\al \ell\de.
	\end{align}
Observe that by the Poincar\`e inequality, \req{M:til1} implies
	\begin{align}\notag
		\Sob{\psi(t)}{H^{1/2}}^2+{{{\kap}}}\int_{I_k}^te^{-({{{\mu}}}/2)(t-s)}\Sob{\psi(s)}{H^{1/2}}^2\ ds
                \leq M_{L^2}^2e^{(\mu/2)\de}\leq 32M_{L^2}^2,\quad t\in I_k, \quad k\geq-1.
	\end{align}
Clearly, one has
	\begin{align}\notag
 M_{L^2}^2e^{(\mu/2)\de}\leq M_{L^2}^2e^{{{{(5\mu/2)}}}{\de}}e^{-({{{\mu}}}/2)(t-{{\de}_{k/2}})}\leq 32M_{L^2}^2e^{-(\mu/2)(t-{\de_{k/2}})},\quad
 t\in I_k, \quad k=-1, 0, 1.
	\end{align}
Then
	\begin{align}\label{bc:sync}
		\Sob{\psi(t)}{H^{1/2}}^2+\frac{{{\kap}}}2\int_{{\de}_{k}}^te^{-({{{\mu}}}/2)(t-s)}\Sob{\psi(s)}
                    {H^{(\gam+1)/2}}^2\ ds\leq{\Psi}^2e^{-({{{\mu}}}/2)(t-{\de}_{k/2})},\quad
                    t\in I_{k},\quad k=-1, 0, 1.
	\end{align}
We are now ready to prove the synchronization property.

\subsection{Proof of Proposition \ref{prop:synch}}

\subsubsection*{Proof of Proposition \ref{prop:synch}}

We proceed by induction on $k$ with the base case, $k=1$, as
established by \req{bc:sync}.
{Suppose} that the following holds:
		\begin{align}\label{ind:hyp:sync}
		\Sob{\psi(t)}{H^{1/2}}^2+\frac{{{\kap}}}{2}\int_{{\de}_{k}}^te^{-({{{\mu}}}/2)(t-s)}\Sob{\psi(s)}
                    {H^{(\gam+1)/2}}^2\ ds\leq{\Psi}^2e^{-({{{\mu}}}/2)(t-{\de}_{k/2})}
\end{align}
{for $t\in I_{\ell}$ and $\ell=0,\dots,k,$}
where $\Psi$ is given by \req{xi:tht}.  We show that this
corresponding bound holds over $I_{k+1}$ as well.

Let $t\in I_{k+1}$, $k\geq1$.  {Multiply} \req{z:eqn2} by $\psi$
and integrate over $\T^2$ to obtain
	\begin{align}\label{balance:sync}
		\frac{1}2\frac{d}{dt}\Sob{\psi}{{H}^{1/2}}^2&+{{{\kap}}}\Sob{\psi}{H^{(\gam+1)/2}}^2+{{{\mu}}}
                \Sob{\psi}{{H}^{1/2}}^2\notag\\ &={\int
                 (u\cdotp\del\psi)\z dx}+{{{{\mu}}} \int
                  (\z-J_h\z)\psi dx}+{\mu}
                \int(J_h\z-J_h^{{\de}}\z)\psi\ dx\notag\\
		&=\K_1+\K_2+\K_3.
	\end{align}
Note that we have used the orthogonality property, i.e., $\Ri^\perp f\cdotp\Ri f=0$.

We refer to \cite{const:wu:qgwksol, resnick} to estimate $\K_1$.  In
particular, by H\"older's inequality, the Calder\`on-Zygmund theorem,
and Sobolev embedding, $H^{1/p}\imb L^q$, we have
	\begin{align}\label{z:nlt:1}
		\left|\K_1\right|\leq
                C\Sob{u}{L^p}\Sob{\z}{L^{q}}\Sob{{\del}\psi}{L^q}\leq
                C\Sob{\tht}{L^p} \Sob{\psi}{H^{1+1/p}}^2,
	\end{align}
where $1/p+2/q=1$.  Since $p>2/(\gam-1)$ by $(H3)$, by interpolation
we have
	\begin{align}\notag
		\Sob{\psi}{H^{1+1/p}}\leq
                C\Sob{\psi}{H^{(\gam+1)/2}}^{\frac{1+2/p}{\gam}}\Sob{\psi}{{H}
                  ^{1/2}}^{\frac{\gam-1-2/p}{\gam}}.
	\end{align}
Thus, by Young's inequality we obtain
	\begin{align}\notag
		\left| \K_1\right|\leq
                \frac{{{{\kap}}}}{6}\Sob{\psi}{H^{(\gam+1)/2}}^2+C{{\kap}}\Xi_{L^p}^2\Sob{\psi}
                     {H^{1/2}}^2.
	\end{align}
where $\Xi_{L^p}$ is given by \req{xi:tht}.  We estimate $\K_2$ with the Parseval's theorem, the Cauchy-Schwarz inequality, \req{TI}, \req{stream:def}, interpolation, and Young's inequality to get
	\begin{align}\notag
		|\K_2|&\leq \mu\Sob{\z-J_h\z}{H^{-\gam/2}}\Sob{\psi}{H^{\gam/2}}\notag\\
		&\leq \mu h^{\gam/2}\Sob{\psi}{H^1}\Sob{\psi}{H^{\gam/2}}\notag\\
		&\leq\mu h^{\gam/2}\Sob{\psi}{H^{(\gam+1)/2}}\Sob{\psi}{H^{1/2}}\notag\\
               &\leq \frac{{{{\kap}}}}{6}\Sob{\psi}{H^{(\gam+1)/2}}^2+C\frac{{{{\mu}}}
                  ^2h^\gam}{{{{\kap}}}}\Sob{\psi}{{H}^{1/2}}^2.\notag
	\end{align}
For $\K_3$, similar to \req{mvt:bc}, we estimate
	\begin{align}\notag
		|\K_3|&\leq{C{\de}
                 \frac{{{{\mu^2}}}}{\kap}
                  \int_{t-2{\de}}^t\Sob{\bdy_sJ_h\z(s)}{H^{-\gam/2}}^2\ ds}
                  +\frac{\kap}4\Sob{\psi}{H^{\gam/2}}^2\notag\\
			&\leq {C{\de}
                 \frac{{{{\mu^2}}}}{\kap}
                  \int_{t-2{\de}}^t\Sob{\bdy_sJ_h\z(s)}{H^{-\gam/2}}^2\ ds}
                  +\frac{{{{\kap}}}}6\Sob{\psi}{H^{(\gam+1)/2}}^{2}.\notag
	\end{align}
Returning to \req{balance:sync} and combining {\eric $\K_1$ through $\K_3$},
	then applying
\req{sync:cond} with $c_0$ {\eric and} $c_0'$ sufficiently large, we get
	\begin{align}\label{balance:star}
		\frac{d}{dt}\Sob{\psi}{{H}^{1/2}}^2&+{{{\kap}}}\Sob{\psi}{H^{(\gam+1)/2}}^2+{{{\mu}}}
                \Sob{\psi}{{H}^{1/2}}^2\leq \mystarb(t),
	\end{align}
where
	\begin{align}\notag
		\mystarb(t):={C{\de}\frac{\kap}{h^{2\gam}}\int_{t-2{\de}}^t\Sob{J_h\bdy_s\psi(s)}{H^{-\gam/2}}^2\ ds}.
	\end{align}

Observe that $\mystarb(t)\leq\mystarb_{k-1}+\mystarb_{k}+\mystarb_{k+1}(t)$,
where
	\begin{align}\notag
		\mystarb_\ell(t):={C{{\de}}\frac{\kap}{h^{2\gam}}\int_{{\de}_\ell}^{t}
            \Sob{J_h\bdy_s\psi(s)}{H^{-\gam/2}}^2\ ds}
	\qquad\hbox{and}\qquad
		\mystarb_\ell:=\mystarb_\ell(\delta_{\ell+1}).
	\end{align}
%
Let $\ell\in\{k-3, k-2, k-1,k\}$.  By the second condition in \req{de:sync:avg}, with $c_2$ chosen large
enough, we have ${\de}{\mu} \leq C^{-1}(\ln 4)$, so that Lemma
\ref{lem:prev:sync} guarantees
  that
	\begin{align}\label{psi:avg1}
		\frac{1}{\de}\int_{I_{\ell}}\Sob{\psi(s)}{H^{1/2}}^2\ ds\leq
                 C\Psi^2e^{-(\mu/2)(t-{\de}_{\ell'/2})},\quad \ell'\in(\ell,\ell+N],\quad N=3,
	\end{align}
as well as
	\begin{align}\label{psi:avg2}
		{\kap}\int_{I_{\ell}}\Sob{\psi(s)}{H^{(\gam+1)/2}}^2\ ds\leq
                C\Psi^2e^{-({\mu}/2)(t-{\de}_{\ell'/ 2})},\quad \ell'\in(\ell,\ell+N],\quad N=3.
	\end{align}
Thus, by Lemma \ref{lem:dt3} and
\req{ind:hyp:sync}, \req{psi:avg1}, and \req{psi:avg2} we have

	\begin{align}\label{kpm1}
		\mystarb(t)\leq&
    			C\de^2\til{C}_1(h)\sum_{\ell=k-3}^k\frac{1}{\de}\int_{I_{\ell}}\Sob{\psi(s)}{H^{1/2}}^2\ ds
	+C\de\til{C}_2(h)
		\sum_{\ell=k-1}^k\kap
		\int_{I_{\ell}}\Sob{\psi(s)}{H^{(\gam+1)/2}}^2\ ds
	\notag\\&
	+C\de\til{C}_1(h)\int_{\de_{k+1}}^t\Sob{\psi(s)}{H^{1/2}}^2\ ds
	+C\de\til{C}_2(h)\kap\int_{\de_{k+1}}^t\Sob{\psi(s)}{H^{(\gam+1)/2}}^2\ ds
	\notag\\
			\leq &\til{O}(\de)\Psi^2e^{-(\mu/2)(t-\de_{k/2})}
	\notag\\&
	+\til{O}_1(\de)\int_{\de_{k+1}}^t\Sob{\psi(s)}{H^{1/2}}^2\ ds
	+\til{O}_2(\de)\kap\int_{\de_{k+1}}^t\Sob{\psi(s)}{H^{(\gam+1)/2}}^2\ ds.
	\end{align}
where $\til{C}_1(h), \til{C}_2(h)$ are given by \req{c1c2a}, \req{c1c2b} and
	\begin{align}\label{more:quants}
		\begin{split}
			\til{O}({\de})&:=C(\til{O}_1(\de^2)+\til{O}_2(\de)),\quad \til{O}_1(\de):=\de\til{C}_1(h),\quad \til{O}_2(\de):=C\de\til{C}_2(h).
		\end{split}
	\end{align}
for some constant $C>0$.


Returning to \req{balance:star} and combining \req{kpm1}
gives
	\begin{align}
		\frac{d}{dt}\Sob{\psi}{H^{1/2}}^2&+{{{\kap}}}\Sob{\psi}{H^{(\gam+1)/2}}^2+{{{\mu}}}\Sob{\psi}
                     {H^{1/2}}^2\notag\\
                     &\leq \til{O}(\de)\Psi^2e^{-(\mu/2)(t-\de_{k/2})}
\notag\\&
\quad +\til{O}_1(\de)\int_{\de_{k+1}}^t\Sob{\psi(s)}{H^{1/2}}^2\ ds+\til{O}_2(\de)\left(\kap\int_{\de_{k+1}}^t\Sob{\psi(s)}{H^{(\gam+1)/2}}^2\ ds\right).\notag
	\end{align}
Hence, provided that $c_1, c_2$ are chosen sufficiently large with $c_2$ depending on $c_1$, it follows from  \req{de:sync:avg} that Lemma \ref{gron} (i) applies
over $t\in I_{k+1}$ with
	\[
		a=\mu,\quad b=\kap,\quad
                A=C(\de\til{C}_1(h)+\til{C}_2(h)),\quad
                B=C\til{C}_2(h),\quad
                F=\til{O}(\de)\Psi^2e^{-(\mu/2)(t-\de_{k/2})}.
	\]
In particular, Lemma \ref{gron} (i) implies
	\begin{align}\notag
		\Sob{\psi(t)}{H^{1/2}}^2&+\frac{{{\kap}}}2\int_{{\de}_{k+1}}^te^{-({{{\mu}}}/2)(t-s)}\Sob{\psi(s)}
                    {H^{(\gam+1)/2}}\ ds\notag\\ &\leq
                    \Sob{\psi({\de}_{k+1})}{H^{1/2}}^2e^{-({{{\mu}}}/2)(t-{\de}_{k+1})}+\til{O}({\de})
                    \Psi
                    e^{-({{{\mu}}}/2)(t-{\de}_{k/2})}(t-{\de}_{k+1}).\notag
	\end{align}
By \req{ind:hyp:sync}, we have
	\begin{align}\notag
		\Sob{\psi({\de}_{k+1})}{H^{1/2}}^2e^{-({{{\mu}}}/2)(t-\de_{k+1})}\leq
                \Psi^2e^{-({{{\mu}}}/2)
                  (\de_{k+1}-\de_{k/2})}e^{-({{{\mu}}}/2)(t-\de_{k+1})}=\Psi^2e^{-({{{\mu}}}/2)(t-\de_{k/2})},\quad
                t\in I_{k +1}.
	\end{align}
Also, we have
	\begin{align}\notag
		\til{O}({\de})\Psi
                e^{-({{{\mu}}}/2)(t-{\de}_{k/2})}(t-{\de}_{k+1})\leq
                \de \til{O}({\de})\Psi e^{-
                  ({{{\mu}}}/2)(t-{\de}_{k/2})}.
	\end{align}
Since
	\[
		e^{-(\mu/2)(t-\de_{k/2})}=e^{-({{{\mu}}}/4)\de}e^{-({{{\mu}}}/2)(t-{\de}_{(k+1)/2})}
	\]
It follows that
	\begin{align}\notag
		\Sob{\psi(t)}{H^{1/2}}^2
		&+\frac{{{\kap}}}2\int_{{\de}_{k+1}}e^{-({{{\mu}}}/2)(t-s)}\Sob{\psi(s)}
                    {H^{(\gam+1)/2}}^2\ ds\\
	&\leq \Psi^2\left(1+\de
                    \til{O}({\de})\right)e^{-({{{\mu}}}/4)\de}e^{-({{{\mu}}}/2)(t-
                      {\de}_{(k+1)/2})},\quad t\in I_{k+1}.
	\end{align}
Observe that \req{de:sync:avg} with $c_1$ chosen sufficiently large
ensures $1+\de \til{O}({\de})\leq e^{({{{\mu}}}/4){\de}}$.  This
establishes \req{ind:hyp:sync} for $k+1$.  Through Lemma \ref{gron} (ii), we may iterate this bound to deduce \req{sync:claim}, as desired.  \qed

\subsection{Proof of Theorem \ref{thm2}}
Under the Standing Hypotheses, Theorem \ref{thm1} guarantees a unique,
global strong solution $\eta $ of \req{fb:sqg:avg}.  Let $c_0$ denote
the maximum among all the constants, $c_0, c_0'$, appearing in Propositions
\ref{prop:l2:avg} and \ref{prop:synch}.
Then let $c_1, c_2$ denote the maximum among all the $c_1, c_2$ appearing in those propositions as well (possibly choosing $c_2$ larger). Suppose that $\mu, h$ satisfies
	\begin{align}\label{mu:xi:cond}
		\frac{1}{c_0'}\Xi_{L^p}^2\leq\frac{\mu}{\kap}\leq \frac{1}{c_0}h^{-\gam}.
	\end{align}
Choose $\de$ so that \req{de:small:l2},
\req{de:sync:avg} are satisfied, and is chosen smaller than
	\begin{align}\notag
			\frac{1}{c_1}\left(\frac{h}{2\pi}\right)\min\left\{\frac{h^{\gam/2}}{(c_0')^{1/2}}\Xi_{L^p}\frac{h^2}{M_{L^2}},\frac{h^\gam}{\kap}\right\}.
	\end{align}
Then \req{mu:xi:cond} implies that \req{gron:cond:l2} holds as well.  Thus, upon applying Propositions \ref{prop:l2:avg} and
\ref{prop:synch}, $\eta$ satisfies
	\begin{align}\notag
		\Sob{\eta(t)-\tht(t)}{H^{-1/2}}\leq
                O(e^{-(\mu/4)(t-2\de)}),\quad t>2\de.
	\end{align}
Observe that Propositions \ref{prop:ga}, \ref{prop:lp:avg}, and \ref{prop:bad:hs} then imply
that
	\[
	\sup_{t>-2\de}\Sob{\eta(t)-\tht(t)}{H^\s}\leq\til{M}_{H^\s}(h)+\Tht_{H^\s},
	\]
where
	\begin{align}
		\til{M}_{H^\s}(h):=\Gam_{H^\s}^2+C_0\left[\left(\til{\Xi}_{L^p}^{\frac{2\s+\gam}\s}+\frac{1}{h^{2\s+\gam}}\right)M_{L^2}^2+\frac{1}{h^{2\s+\gam}}+\frac{1}{h^{2\s+\gam}}R_{H^\s}^2\right]\notag,
	\end{align}
for some sufficiently large constant $C_0>0$.  Therefore, for each $\s'<\s$, by
interpolation, there exists a constant $\lam_0=
\lam_0(\s')\in(0,1)$ such that
	\begin{align}\notag
		\Sob{\eta(t)-\tht(t)}{H^{\s'}}\leq
                {O(e^{-\lam_0\mu(t-2\de)})},\quad t>2\de.
	\end{align}
Choosing $\s'=0$, yields the desired convergence in $L^2$.

\qed

\subsection{Concluding remarks}\label{methodology}
Depending on the type of measurement, the size of the
averaging window that effectively blurs the observations in
time may be quite different.
For example, radiometers and hot-wire anemometers may produce
data with averages in the microsecond range.
Velocities obtained from mechanical
weather-vane anemometers may be averaged with respect to
a time window measured in seconds, while
velocity data obtained from the Lagrangian trajectories
of buoys placed in the ocean is likely to include time averages
measured in hours if not days.
Observations of temperatures are similar.
As we saw, it is important for our analysis that the size of the
time-averaging window is not too large.
Intuitively speaking, the length of the averaging window
should be smaller than any dynamically relevant timescales
in the flow.
Numerical computations involving the Lorenz
system \cite{blocher} show that synchronization occurs when
the averaging window is of size $\delta=0.25$ which, poetically
speaking, is
about ten times smaller than the time it takes to travel
around one wing of the butterfly.
In the case of the fluids, we conjecture
that the averaging window should be at least ten times
smaller than the turnover time of the smallest physically
relevant eddy.
Alternatively, the largest averaging window such that
our data assimilation algorithm leads to full recovery of
the observed solution could be interpreted as a definition
of the smallest physically relevant time scale.


We reiterate that a main motivation to consider a more realistic representation of
physical observations is the reason for considering time averages.
The additional $\delta$ delay introduced into equations (\ref{fb:sqg:prelim})
helps close the estimates in the analysis while being of the same
magnitude as the $\delta/2$ delay dictated by causality considerations
in the feedback controller (see Remark \ref{rmk:delay}).
In practice, such a delay may also be used to advance an
initial condition already obtained by data assimilation for
a short time into the future to increase the stability of
further predictions.
However, this idea must be left for a different study.

\subsection*{\bf Acknowledgments}  The authors would like to thank the Institute of Pure and Applied Mathematics (IPAM) at UCLA for the warm hospitality where this collaboration was conceived. The authors are also thankful to Thomas Bewley, Aseel Farhat, and Hakima Bessaih for the insightful discussions.
M.S.\ Jolly was supported by NSF grant DMS-1418911.
 E.\ Olson was supported by NFS grant DMS-1418928.
 The work of E.S.\ Titi was supported
in part by ONR grant N00014-15-1-2333, the Einstein
Stiftung/Foundation--Berlin, through the Einstein Visiting
Fellow Program, and by the John Simon Guggenheim Memorial Foundation.

\appendix
\section{}\label{appA}

To obtain the uniform estimates, we invoked a non-local Gronwall
inequality, which ensured such bounds provided that the non-local term
was sufficiently small.

\begin{lem}\label{gron}
Let ${\Ph},{{\Psi}}, F$ be non-negative, locally integrable functions
on $(t_0,t_0+\de]$ for some $t_0\in \R$ and $\de>0$ such that
	\begin{align}\label{d:ineq}
		\frac{d}{dt}{\Ph}+a{\Ph}+b{{\Psi}}\leq F+A\de
                \int_{t_0}^t{\Ph}(s)\ ds+B\de\int_{t_0}^t{{\Psi}}
                (s)\ ds,\quad t\in(t_0,t_0+\de),
	\end{align}
for some $a,b,A, B>0$.  Suppose that $\de, a, c$ satisfy
	\begin{align}\label{de:cond:gron}
		\de\left(e^{(a/2)\de}-1\right)\leq\frac{a}{4}\min\left\{\frac{a}{A},\frac{b}{B}\right\},
	\end{align}
where we use the convention that $a/A=\infty$, $b/B=\infty$ if $A=0$, $B=0$, respectively.  Then the
following hold:

\begin{enumerate}[(i)]
	\item For all $t\in (t_0,t_0+\de]$:
	\begin{align}\label{unif:gron0}
		{\Ph}(t)+\frac{b}2\int_{t_0}^te^{-(a/2)(t-s)}{{\Psi}}(s)\ ds\leq
                e^{-(a/2)(t-t_0)}{\Ph}(t_0)+
                \int_{t_0}^te^{-(a/2)(t-s)}F(s)\ ds.
	\end{align}

	\item If ${\Ph}$ satisfies
	\begin{align}\label{unif:gron1}
		{\Ph}(t)\leq
                e^{-(a/2)(t-\de_0)}{\Ph}(\de_0)+\int_{\de_0}^te^{-(a/2)(t-s)}F(s)\ ds,\quad
                t \in(\de_0,t_0],
	\end{align}
	for some $\de_0<t_0$, then \req{unif:gron1} persists over
        $t\in(t_0,t_0+\de]$.

\end{enumerate}

\end{lem}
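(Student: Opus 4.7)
The plan is a classical integrating-factor argument in which the double (non-local) integrals appearing on the right-hand side of \req{d:ineq} are absorbed back into the left-hand side under the smallness condition \req{de:cond:gron}. First I would introduce the exponentially weighted quantity $\til{\Phi}(t):=e^{(a/2)(t-t_0)}\Phi(t)$, so that \req{d:ineq} becomes
$$\frac{d\til{\Phi}}{dt}+\frac{a}{2}\til{\Phi}+be^{(a/2)(t-t_0)}\Psi\le e^{(a/2)(t-t_0)}\Bigl[F+A\de\int_{t_0}^t\Phi(s)\,ds+B\de\int_{t_0}^t\Psi(s)\,ds\Bigr].$$
Integrating from $t_0$ to $t$ and retaining, crucially, the $(a/2)\int_{t_0}^t\til{\Phi}$ term on the left, yields
$$\til{\Phi}(t)+\frac{a}{2}\int_{t_0}^t\til{\Phi}(s)\,ds+b\int_{t_0}^te^{(a/2)(s-t_0)}\Psi(s)\,ds\le \Phi(t_0)+\int_{t_0}^te^{(a/2)(s-t_0)}F(s)\,ds+\mathcal{N}_\Phi(t)+\mathcal{N}_\Psi(t),$$
with $\mathcal{N}_\Phi(t),\mathcal{N}_\Psi(t)$ the two double-integral non-local residues.

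The key step is then to swap the order of integration in each residue via Fubini and bound the inner integral using the explicit antiderivative. Concretely,
$$\mathcal{N}_\Phi(t)=\frac{2A\de}{a}\int_{t_0}^t\til{\Phi}(\tau)\bigl[e^{(a/2)(t-\tau)}-1\bigr]d\tau\le\frac{2A\de(e^{(a/2)\de}-1)}{a}\int_{t_0}^t\til{\Phi}(s)\,ds,$$
and analogously $\mathcal{N}_\Psi(t)\le\bigl(2B\de(e^{(a/2)\de}-1)/a\bigr)\int_{t_0}^te^{(a/2)(s-t_0)}\Psi(s)\,ds$. The smallness condition \req{de:cond:gron} is tuned precisely so that the coefficient in front of $\int_{t_0}^t\til{\Phi}\,ds$ is at most $a/2$ and the coefficient in front of $\int_{t_0}^t e^{(a/2)(s-t_0)}\Psi(s)\,ds$ is at most $b/2$. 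Absorbing $\mathcal{N}_\Phi$ entirely into the $\til{\Phi}$-integral and $\mathcal{N}_\Psi$ into half of the $\Psi$-integral leaves
$$\til{\Phi}(t)+\frac{b}{2}\int_{t_0}^te^{(a/2)(s-t_0)}\Psi(s)\,ds\le\Phi(t_0)+\int_{t_0}^te^{(a/2)(s-t_0)}F(s)\,ds,$$
which, after multiplying by $e^{-(a/2)(t-t_0)}$, is exactly the bound claimed in part (i).

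Part (ii) will be a purely algebraic consequence of (i): applying (i) over $(t_0,t_0+\de]$ gives $\Phi(t)\le e^{-(a/2)(t-t_0)}\Phi(t_0)+\int_{t_0}^te^{-(a/2)(t-s)}F(s)\,ds$, and substituting the hypothesized bound on $\Phi(t_0)$ in terms of $\Phi(\de_0)$ collapses the exponentials through $e^{-(a/2)(t-t_0)}e^{-(a/2)(t_0-\de_0)}=e^{-(a/2)(t-\de_0)}$ and merges the two $F$-integrals into a single integral on $(\de_0,t]$. The main technical obstacle in the whole argument is the careful bookkeeping in the absorption step: the factor $2/a$ from the inner $s$-integration, the factor $e^{(a/2)\de}-1$ from the bracket $e^{(a/2)(t-\tau)}-1$, and the parameters $A,B$ must multiply together to give coefficients no larger than $a/2$ and $b/2$, which is precisely what dictates the symmetric form of the hypothesis \req{de:cond:gron}.
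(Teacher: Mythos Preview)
Your proposal is correct and follows essentially the same integrating-factor argument as the paper: multiply by $e^{(a/2)t}$, integrate, and absorb the non-local double integrals into the surviving $(a/2)\int\til\Phi$ and $b\int e^{(a/2)(s-t_0)}\Psi$ terms on the left under the smallness condition \req{de:cond:gron}. The only cosmetic difference is that the paper bounds the double integrals by pulling out $\int_{t_0}^s\Phi\,d\tau\le\int_{t_0}^t\Phi\,d\tau$ and then integrating the exponential, whereas you swap the order of integration via Fubini; both routes produce the same coefficient $2A\de(e^{(a/2)\de}-1)/a$ and hence the same condition, and your treatment of part (ii) matches the paper's exactly.
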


\begin{proof}
Multiplying by the factor $e^{(a/2){{{\kap}}} t}$, then integrating
over $[t_0,t]$, we obtain
	\begin{align}\notag
		{\Ph}(t)+\frac{a}2\int_{t_0}^t&e^{-(a/2)(t-s)}{\Ph}(s)\ ds+b\int_{t_0}^te^{-(a/2)(t-s)}{{\Psi}}(s)\ ds\notag\\ \leq&
                e^{-(a/2)(t-t_0)}{\Ph}(t_0)+\int_{t_0}^te^{-(a/2)(t-s)}F(s)\ ds\notag\\ &+A\de\int_{t_0}^te^{-(a/2){{{\kap}}}(t-s)}\int_{t_0}^s{\Ph}(\tau)\ d\tau\ ds+B\de\int_{t_0}
                ^te^{-(a/2){{{\kap}}}(t-s)}\int_{t_0}^s{{\Psi}}(\tau)\ d\tau\ ds,\notag
	\end{align}

Observe that
	\begin{align}\notag
		\frac{a}2\int_{t_0}^te^{-(a/2)(t-s)}{\Ph}(s)\ ds&\geq
                \frac{a}2e^{-(a/2)(t-t_0)}\int_{t_0}^t{\Ph}(s)
                \ ds\notag\\ A\de\int_{t_0}^te^{-(a/2)(t-s)}\int_{t_0}^s{\Ph}(\tau)\ d\tau\ ds&\leq
                \frac{2A\de}a\left(1-e^{-(a/
                  2)(t-t_0)}\right)\int_{t_0}^t{\Ph}(\tau)\ d\tau.\notag
	\end{align}
Similarly
	\begin{align}\notag
	\frac{b}2\int_{t_0}^te^{-(a/2)(t-s)}{{\Psi}}(s)\ ds&\geq
        \frac{b}2e^{-(a/2)(t-t_0)}\int_{t_0}^t{{\Psi}}(s)\ ds\notag\\ B\de\int_{t_0}^te^{-(a/2)(t-s)}\int_{t_0}^s{{\Psi}}(\tau)\ d\tau\ ds&\leq
        \frac{2B\de}b\left(1-e^{-
          (a/2)(t-t_0)}\right)\int_{t_0}^t{{\Psi}}(s)\ ds.\notag
	\end{align}

It follows that
	\begin{align}\notag
		\frac{a}2\int_{t_0}^te^{-(a/2)(t-s)}&{\Ph}(s)\ ds-c\de\int_{t_0}^te^{-(a/2)(t-s)}\int_{t_0}^s{\Ph}
                (\tau)\ d\tau\ ds\notag\\ &\geq
                \frac{a}2\left[1-\frac{4A\de}{a^2}\left(e^{(a/2)(t-t_0)}-1\right)\right]e^{-
                  (a/2)(t-t_0)}\int_{t_0}^t{\Ph}(s)\ ds\geq0,\notag
	\end{align}
provided that the first condition in \req{de:cond:gron} holds.  This
also holds with $b, B, \psi$, replacing $a, A, {\Ph}$, respectively,
provided the second condition in \req{de:cond:gron} holds.  This
implies \req{unif:gron0}.

Now assume that \req{d:ineq} holds over $(t_0,t_0+\de)$ and that
\req{unif:gron1} holds over $ [\de_0,t_0]$, for some $\de_0>0$.  Then
applying \req{unif:gron1} at $t_0$ to \req{unif:gron0} we have
	\begin{align}
		{\Ph}(t) &\leq
                {\Ph}(\de_0)e^{-(a/2)(t-\de_0)}+\int_{\de_0}^{t_0}e^{-(a/2)(t-s)}F(s)\ ds+\int_{t_0}
                ^te^{-(a/2)(t-s)}F(s)\ ds\notag,
	\end{align}
which simplifies to \req{unif:gron1}, as desired.
\end{proof}

We also made use of the following lemma in order to control feedback
effects that enter the present instant through a past time interval
and ultimately, ensure synchronization (see \req{balance:star}).
	
\begin{lem}\label{lem:prev:sync}
Let $\ell\geq-1$ and $N>0$.  Let $\de>0$ and define
$\de_\ell:=\ell\de$ and $I_\ell:=(\de_\ell, \de_{\ell+1}]$.
Let $\Ph, \Psi$ be non-negative, locally integrable functions.
 Suppose that for some $\ell\geq-1$, there exist constants $a, b,
 \Ph_0>0$, independent of $\ell, N$, such that
	\begin{align}\label{Ph:hyp}
		{\Ph}(t)+b\int_{\de_{\ell}}^te^{-({a}/2)(t-s)}{\Psi}(s)\ ds\leq
                {{\Ph}_0} e^{-({{{a}}}/2)(t-\de_{\ell/ 2})},\quad t\in
                I_{\ell}.
	\end{align}
If $\de$ satisfies
	\begin{align}\label{lem:de:sync}
		\de<\frac{c}{{{a}}},
	\end{align}
for some constant $c>0$, then there exists a constant $C_N>0$ such that
	\begin{align}\label{Ph:bound}
		\frac{1}{\de}\int_{I_\ell}{{\Ph}}(s)\ ds\leq
                C_N{{\Ph}_0} e^{-({{{a}}}/2)(t-\de_{\ell'/2})}\quad
                \ell'\in(\ell,\ell+N].
	\end{align}
and
	\begin{align}\label{Psi:bound}
		b\int_{I_\ell}{\Psi}(s)\ ds\leq
                {C}_N{\Ph}_0e^{-({{{a}}}/2)(t-\de_{\ell'/2})},\quad
                \ell'\in(\ell,\ell +N].
	\end{align}
\end{lem}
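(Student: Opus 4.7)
The plan is to derive both bounds directly from the hypothesis by exploiting the fact that $\Ph$ and the weighted integral of $\Psi$ appear together on the left, while the right-hand side of (\ref{Ph:hyp}) is an explicit decaying exponential. Since $\Ph,\Psi\geq 0$, we obtain two useful specializations: dropping the $\Psi$-integral gives a pointwise bound on $\Ph(t)$, while dropping $\Ph(t)$ and evaluating at $t=\delta_{\ell+1}$ gives a bound on the Duhamel-type integral of $\Psi$ over all of $I_\ell$.

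More concretely, first I would extract the pointwise estimate $\Ph(t)\le\Ph_0 e^{-(a/2)(t-\delta_{\ell/2})}$ for $t\in I_\ell$, then integrate over $I_\ell$ using $1-e^{-x}\le x$ to get
\begin{align*}
\int_{I_\ell}\Ph(s)\,ds
 \le \Ph_0\,\tfrac{2}{a}\,e^{-(a/2)(\delta_\ell-\delta_{\ell/2})}\bigl(1-e^{-(a/2)\delta}\bigr)
 \le \delta\,\Ph_0\, e^{-(a/2)\delta_{\ell/2}},
\end{align*}
which, upon dividing by $\delta$, gives $\tfrac1\delta\int_{I_\ell}\Ph(s)\,ds\le\Ph_0 e^{-(a/2)\delta_{\ell/2}}$. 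For the $\Psi$ estimate, I would evaluate (\ref{Ph:hyp}) at $t=\delta_{\ell+1}$, drop the nonnegative $\Ph$ term, and use the crude lower bound $e^{-(a/2)(\delta_{\ell+1}-s)}\ge e^{-(a/2)\delta}$ valid for $s\in I_\ell$. After simplification using $\delta_{\ell+1}-\delta_{\ell/2}=\delta+\delta_{\ell/2}$, this yields $b\int_{I_\ell}\Psi(s)\,ds\le \Ph_0 e^{-(a/2)\delta_{\ell/2}}$, matching the $\Ph$ bound.

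The final step is cosmetic but requires the smallness hypothesis (\ref{lem:de:sync}). Given $\ell'\in(\ell,\ell+N]$ and the time $t$ at which the conclusion will be applied (lying within $O(N\delta)$ of $\delta_{\ell'}$), write
\begin{align*}
e^{-(a/2)\delta_{\ell/2}}
= e^{-(a/2)(t-\delta_{\ell'/2})}\cdot e^{(a/2)(t-\delta_{\ell'/2}-\delta_{\ell/2})},
\end{align*}
and absorb the second factor into a constant $C_N$ by invoking $\delta a<c$, so that $e^{(a/2)\cdot O(N\delta)}\le e^{Nc/2}=:C_N$. Substituting into the two integral bounds above delivers (\ref{Ph:bound}) and (\ref{Psi:bound}) simultaneously.

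The main obstacle I anticipate is bookkeeping the shift from the natural decay time $\delta_{\ell/2}$ to the target decay time $\delta_{\ell'/2}$: one must verify that for each admissible $\ell'$ and each relevant $t$, the exponent $t-\delta_{\ell'/2}-\delta_{\ell/2}$ stays bounded by a constant multiple of $N\delta$, so that the smallness condition on $a\delta$ can actually convert the shift into a constant $C_N$ independent of $\ell$. The two substantive estimates themselves (the pointwise extraction of $\Ph$ and the $\Psi$ bound at $t=\delta_{\ell+1}$) are straightforward once one notices that keeping only one of the two positive terms in the hypothesis is both allowed and optimal for the respective targets.
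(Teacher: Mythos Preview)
Your proposal is correct and follows essentially the same approach as the paper: drop one of the two nonnegative terms in the hypothesis to bound the other, integrate (for $\Ph$) or evaluate at the right endpoint with the crude lower bound on the exponential weight (for $\Psi$) to obtain $\Ph_0 e^{-(a/2)\de_{\ell/2}}$, and then shift $\de_{\ell/2}\mapsto t-\de_{\ell'/2}$ using $\de a<c$ exactly as in the paper's display \req{exp:lk}. The only cosmetic differences are that the paper invokes the mean value theorem where you use $1-e^{-x}\le x$, and the paper carries a general $t\in I_\ell$ in the $\Psi$ estimate rather than specializing to $t=\de_{\ell+1}$; both yield the same bound $\Ph_0 e^{-(a/2)\de_{\ell/2}}$ and the same $C_N=e^{(c/2)(1+N/2)}$.
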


\begin{proof}
Observe that by the mean value theorem
	\begin{align}
		\int_{I_\ell}{{\Ph}}(s)\ ds&\leq{{\Ph}_0}\int_{\de_\ell}^{\de_{\ell+1}}e^{-({{{a}}}/2)(s-\de_{\ell/2})}
                \ ds\notag\\ &={{\Ph}_0}e^{({{{a}}}/2)\de_{\ell/2}}\frac{2}{{{a}}}
                \left(e^{-({{{a}}}/2)\de_{\ell}}-e^{-({{{a}}}/2)\de_{\ell+1}}\right)\notag\\ &={{\Ph}_0}e^{({{{a}}}/2)\de_{\ell/2}}e^{-({{{a}}}/
                  2)\de_{\ell+1}}\frac{2}{{{a}}}\left(e^{({{{a}}}/2)\de}-1\right)\notag\\ &={{\Ph_0}}e^{-({{{a}}}/2)\de_{\ell/2}}
                e^{-({{{a}}}/2)\de(1-\tht)}\de\notag.
	\end{align}
for some $0<\tht<1$, depending on $\de$.

By assumption on $\ell',\ell$, and the fact that $t\leq
\de_{\ell'+1}$, we have
	\begin{align}\label{exp:lk}
		e^{-({{{a}}}/2)\de_{\ell/2}}&=e^{-({{{a}}}/2)(t-\de_{\ell'/2})}e^{-({{{a}}}/2)\de_{\ell/2}}e^{({{{a}}}/2)
                  (t-\de_{\ell'/2})}\notag\\ &\leq
                e^{-({{{a}}}/2)(t-\de_{\ell'/2})}e^{({{{a}}}/2)(\de_{\ell'/2}-
                  \de_{\ell/2})} e^{({{{a}}}/2)\de}\notag\\ &\leq
                e^{({{{a}}}/2)\de(1+N/2)}e^{-({{{a}}}/2)(t-\de_{\ell'/2})}.
	\end{align}
Thus, by letting $C_N:=e^{(c/2)(1+ N/2)}$, \req{lem:de:sync} and
\req{exp:lk} imply \req{Ph:bound}.

On the other hand, observe that
	\begin{align}\notag
		b\int_{\de_\ell}^te^{-({{{a}}}/2)(t-s)}{\Psi}(s)\ ds\geq
                e^{-({{{a}}}/2)(t-\de_\ell)}b\int_{\de_{\ell}}
                ^t{\Psi}(s)\ ds.
	\end{align}
Upon application of \req{Ph:hyp}, we have
	\begin{align}\notag
		b\int_{\de_\ell}^t{\Psi}(s)\ ds\leq
                e^{({{{a}}}/2)(t-\de_\ell)}{\Ph}_0e^{-({{{a}}}/2)(t-\de_{\ell/2})}
                ={\Ph}_0e^{-({{{a}}}/2)\de_{\ell/2}},\quad t\in
                I_\ell.
	\end{align}
Thus, by \req{exp:lk} we have
	\begin{align}\notag
		b\int_{I_\ell}{\Psi}(s)\ ds\leq
                C_Ne^{-({{{a}}}/2)(t-\de_{\ell'/2})},
	\end{align}
and we are done.
\end{proof}

\section{}\label{appC}

\subsection{Partition of unity}
Let us briefly recall the partition of unity constructed in \cite{azouani:olson:titi} and used in \cite{jmt}.  To this end, we define for $\phi\in L^1(\T^2)$
	\begin{align}\label{glob:avg}
		\lb\phi\rb:=\frac{1}{4\pi^2}\int_{\T^2}\phi(x)\ dx.
	\end{align}

Let $N>0$ be a perfect square integer and partition $\Om$ into $4N$ squares of side-length $h=\pi/\sqrt{N}$.  Let $\mathcal{J}=\{0,\pm1,\pm2,\dots,\pm(\sqrt{N}-1), -\sqrt{N}\}^2$ and for each $\al\in\mathcal{J}$, define the semi-open square
	\begin{align}
		Q_\al=[ih,(i+1)h)\times[jh,(j+1)h),\quad\text{where}\quad \al=(i,j)\in\mathcal{J}.\notag
	\end{align}
Let $\mathcal{Q}$ denote the collection of all $Q_\al$, i.e.
	\begin{align}
		\mathcal{Q}:=\{Q_\al\}_{\al\in\mathcal{J}}.\notag
	\end{align}
	
Suppose that $N\geq9$ and $\eps=h/10$.  For each $\al=(i,j)\in\mathcal{J}$, let us also define the augmented squares, $\hat{Q}_\al$ and $Q_\al(\eps)$, by
	\begin{align}\label{Q:eps}
		\hat{Q}_\al:=[(i-1)h, (i+2)h]\times[(j-1)h,(j+2)h]\quad\text{and}\quad Q_\al(\eps):=Q_\al+B(0,\eps).
	\end{align}
so that $Q_\al\sub Q_\al(\eps)\sub\hat{Q}_\al$ for each $\al\in\mathcal{J}$, and the ``core," $C_\al(\eps)$, by
	\begin{align}
		C_\al(\eps):=Q_\al(\eps)\smod\bigcup_{\al'\neq\al}Q_{\al'}(\eps)\neq\varnothing,\quad \al\in\mathcal{J}.\notag
	\end{align}
Then there exists a collection of functions $\{\psi_\al\}$ satisfying the properties in Proposition \ref{prop:pou}.  Note that we will use the convention that when $\be$ a positive integer, then $D^\be=\bdy_1^{\be_1}\bdy_2^{\be_2}$, where $\be_1+\be_2=\be$ and $\be_j\geq0$ are integers, while if $\be>0$ is not an integer then $D^\be=\bdy_1^{[\be_1]}\bdy_2^{[\be_2]}\Lam^{\be-[\be]}$, where $[\be]=[\be_1]+[\be_2]$, and finally, if $\be\in(-2,0)$, then $D^\be=\Lam^{\be}$.

\begin{prop}\label{prop:pou}
Let $N\geq9$, $h:=L/\sqrt{N}$, and $\eps:=h/10$.  The collection $\{\psi_{\al}\}_{\al\in\mathcal{J}}$ forms a smooth partition of unity satisfying
	\begin{enumerate}[(i)]
		\item $0\leq\til{\psi}_\al\leq1$ and $\spt\til{\psi}_{\al}\sub ({Q}_\al(\eps)+(2\pi\Z)^2)$;
		\item $\til{\psi}_\al=1$, for all $x\in (C_\al(\eps)+(2\pi\Z)^2)$ and $\sum_{\al\in\mathcal{J}}\til{\psi}_\al(x)=1$, for all $x\in\R^2$;
		\item $c_1h^{2/p}\leq\Sob{\til{\psi}_\al}{L^p(\T^2)}\leq c_2h^{2/p}$, for all $p\in[1,\infty)$, for some constants $c_1, c_2>0$; in particular $(h/(2\pi))^2\leq \lb\til{\psi}_\al\rb\leq c(h/(2\pi))^2$;, for some constant $c>1$;
		\item $\sup_{\al\in\mathcal{J}}\Sob{\til{\psi}_{\al}}{\dot{H}^\be(\T^2)}\ls h^{1-\be}$, for all $\be>-1$;
		\item $\sup_{\al\in\mathcal{J}}\Sob{\til{\psi}_\al}{\dot{H}^{\be}(\T^2)}\ls\left(\frac{2\pi}{h}\right)^{1-\be-\eps(|\be|)} h^{1-\be}$, for all $\be\in(-2,-1]$, for some $\eps\in(1,2)$, where the suppressed constant depends on $\be$;
		\item $\sup_{\al\in\mathcal{J}}\Sob{\Lam^\be D^k\psi_\al}{L^{\infty}(\T^2)}\ls h^{-k-\be}$, for all $\be\in[0,1)$, $k\geq0$ integer.
	\end{enumerate}
\end{prop}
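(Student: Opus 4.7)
The plan is to produce $\til\psi_\al$ by mollifying the indicator of $Q_\al$. Fix a standard nonnegative radial mollifier $\varphi\in C^\infty_c(B(0,\eps/2))$ with $\int\varphi=1$, and set $\til\psi_\al:=\chi_{Q_\al}\ast\varphi$. Since the squares $\{Q_\al\}_{\al\in\mathcal{J}}$ tile $\T^2$, we immediately obtain $\sum_\al\til\psi_\al\equiv 1$. The support and positivity conditions in (i) then follow from $\spt(\chi_{Q_\al}\ast\varphi)\sub Q_\al+B(0,\eps/2)\sub Q_\al(\eps)$ together with $\varphi\geq 0$ and $\int\varphi=1$.

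For the core identity in (ii), I would use a short geometric argument: if $x\in C_\al(\eps)$ then by definition $\dist(x,Q_{\al'})\ge\eps$ for every $\al'\neq\al$, so every point of $B(x,\eps/2)$ lies in $Q_\al$ (as the $Q_{\al'}$ cover $\T^2$), giving $\til\psi_\al(x)=\int\chi_{Q_\al}(y)\varphi(x-y)\,dy=\int\varphi(z)\,dz=1$. The $L^p$ bounds in (iii) follow at once: the upper bound from Young's convolution inequality $\Sob{\til\psi_\al}{L^p}\leq\Sob{\chi_{Q_\al}}{L^p}\Sob{\varphi}{L^1}=h^{2/p}$, and the lower bound from the previous step, since $\til\psi_\al\ge\chi_{C_\al(\eps)}$ and $|C_\al(\eps)|\gs h^2$.

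For the Sobolev estimates (iv)--(v), the plan is to work on the Fourier side. A direct computation gives $\wh{\chi}_{Q_\al}(\mathbf{k})\sim h^2\prod_{j=1}^2\mathrm{sinc}(hk_j/2)$ up to a phase, while $\wh\varphi(\eps\mathbf{k})$ is Schwartz and provides rapid decay on $|\eps\mathbf{k}|\gs 1$. Plancherel then yields
\begin{align*}
\Sob{\til\psi_\al}{\dot H^\be}^2 \ls h^4\!\!\sum_{0<|\mathbf{k}|\ls h^{-1}}\!|\mathbf{k}|^{2\be}.
\end{align*}
For $\be>-1$ this sum is governed by its high-frequency tail of size $h^{-2\be-2}$, yielding $\Sob{\til\psi_\al}{\dot H^\be}\ls h^{1-\be}$ as in (iv). For $\be\in(-2,-1]$ the opposite is true: the low modes dominate, and because $\til\psi_\al$ has mean $\sim h^2\neq 0$ one cannot exploit cancellation at zero frequency; the degraded prefactor $(2\pi/h)^{1-\be-\eps(|\be|)}$ in (v) is what absorbs this lack of cancellation. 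The integer-derivative bound in (vi) is more elementary, obtained from $D^k\til\psi_\al=\chi_{Q_\al}\ast D^k\varphi$, the scaling $\Sob{D^k\varphi}{L^\infty}\ls\eps^{-k-2}$, and $|Q_\al|\sim h^2$; a fractional Riesz factor $\Lam^\be$ for $\be\in[0,1)$ is then inserted by a standard Bernstein/Mikhlin-type argument tuned to the scale $h$.

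The chief obstacle I anticipate is the borderline behavior at $\be\to-1^-$ in (v), where a logarithmic divergence in the Fourier sum would naturally appear; the parameter $\eps(|\be|)\in(1,2)$ in the prefactor is designed precisely to absorb this boundary singularity in a scale-invariant way. A secondary technical point is verifying that the exponents in (iv)--(v) are uniform in $\al$; this reduces to checking that the above Fourier estimates depend on $Q_\al$ only through its diameter, which is automatic because translation of $Q_\al$ only modifies $\wh\chi_{Q_\al}$ by a phase.
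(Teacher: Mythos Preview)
Your construction and treatment of (i)--(iii) are fine and match the spirit of the partition built in \cite{azouani:olson:titi}.  For (iv)--(vi), however, your route is genuinely different from the paper's.  The paper works in physical space throughout: it rescales via $\til\Psi_\al(x)=\til\psi_\al(hx)$ and Lemma~\ref{lem:basic:fourier}, then for (iv) with $\be\in(-1,0)$ applies the Hardy--Littlewood--Sobolev inequality to $\Lam^{-\be}\til\Psi_\al$; for (v) it argues by duality, pairing $\til\psi_\al$ against a test function in $\dot H^{|\be|}$ and using H\"older plus HLS with an exponent $q>2/(2-|\be|)$ (this H\"older step is what produces the prefactor $(2\pi/h)^{1-\be-\eps(|\be|)}$); and for (vi) it uses the singular-integral representation of $\Lam^\be$ on the torus, splitting into near and far regions around $\spt\til\Psi_\al$.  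Your Plancherel/$\mathrm{sinc}$ computation is a legitimate alternative for (iv)--(v) and, indeed, for $\be<-1$ the lattice sum $\sum_{k\ne 0}|k|^{2\be}$ converges outright, so your method actually yields the \emph{stronger} bound $\Sob{\til\psi_\al}{\dot H^\be}\ls h^2$, which implies (v).  Your stated rationale for the degraded prefactor in (v)---that the nonzero mean obstructs cancellation at zero frequency---is not correct, since the $\dot H^\be$ norm in this paper explicitly excludes $\mathbf{k}=0$; the prefactor in (v) is an artifact of the paper's duality argument, not an intrinsic feature.  For (vi), note that $\til\psi_\al$ is only approximately band-limited (the mollifier gives Schwartz decay, not compact Fourier support), so a clean Bernstein step is not immediately available; you would need to either run Littlewood--Paley and sum over scales, or instead pass the operator onto the mollifier via $\Lam^\be D^k\til\psi_\al=\chi_{Q_\al}*\Lam^\be D^k\varphi$ and control $\Sob{\Lam^\be D^k\varphi}{L^\infty}$ by scaling---which is closer in spirit to the paper's kernel-based argument.
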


Property (iii) was exploited in \cite{jmt}, but only in the case $p=2$.  We observe here, however, that it also holds for any $p\in[1,\infty)$ since $\ind_{Q_\al}\leq\til{\psi}_\al\leq 1$ and $\spt\til{\psi}_\al\sub(Q_\al(\eps)+(2\pi\Z)^2)$.  On the other hand, property (iv) for $\be\geq0$ was sufficient for the purposes in \cite{jmt}.  We will show here that it also holds $\be\in(-2,0)$, i.e. property $(v)$, as well as the $L^\infty$ estimate $(vi)$.  For this, we will appeal to the following elementary fact:
	\begin{align}\label{lam:rescale}
		(\Lam^{\be}(\phi(\lam\cdotp)))(x)=\lam^\be(\Lam^{\be}\phi)(\lam x),\quad x\in\T^2,\quad \lam>0,
	\end{align}
where we define
	\begin{align}\notag
		\phi(\lam \cdotp)(x):=\phi(\lam x).
	\end{align}
The relation \req{lam:rescale} can be seen easily by appealing to the Fourier transform.  Due to the subtleties of working with periodic functions, we include the details in Lemma \ref{lem:basic:fourier} below.  To this end, let us define
	\begin{align}
		\lb \phi,\psi\rb_{L^2(\Om)}:=\int_{\Om}\phi(x)\overline{\psi(x)}\ dx.\notag
	\end{align}
Let us also denote the Fourier transform on $\T^2$, i.e., for functions which are periodic with period $2\pi$ in $x,y$, by
	\begin{align}
		\mathcal{F}(\phi)(\mathbf{k})=\frac{1}{4\pi^2}\int_{\T^2}e^{-i\mathbf{k}\cdotp x}\phi(x)\ dx,\notag
	\end{align}
and by $\mathcal{F}_\lam$ the Fourier transform on $\lam^{-1}\T^2$, for $\lam>0$, i.e., for functions which are periodic with period $\lam^{-1}2\pi$ in $x,y$, by
	\begin{align}
		(\mathcal{F}_\lam \phi)(\til{\mathbf{k}})=\frac{\lam^2}{4\pi^2}\int_{\lam^{-1}\T^2}e^{-i\til{\mathbf{k}}\cdotp x}\phi(x)\ dx,\quad\til{\mathbf{k}}\in \lam\Z^2.\notag
	\end{align}

\begin{lem}\label{lem:basic:fourier}
Let $\be\in(-2,2]$.  Then
	\begin{enumerate}[(i)]
		\item $\lb \phi,\psi\rb_{L^2(\T^2)}=\lam^2\lb \phi(\lam\cdotp),\psi(\lam\cdotp)\rb_{L^2(\lam^{-1}\T^2)}$, for $\lam>0$.
		\item $\Lam^\be \phi(\lam\cdotp)(x)=\lam^\be(\Lam^\be \phi)(\lam x)$, for $\lam>0$, and any $\be\in\R$, provided that $\phi\in\mathcal{Z}$.
	\end{enumerate}
\end{lem}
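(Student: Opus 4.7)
The plan is to prove both parts by a direct change of variables, with part (ii) reduced to a computation of Fourier coefficients on the rescaled torus.

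For part (i), I would simply substitute $y=\lambda x$ in the integral $\lb\phi,\psi\rb_{L^2(\T^2)}=\int_{\T^2}\phi(y)\overline{\psi(y)}\,dy$. Since the substitution $y=\lambda x$ maps $\lambda^{-1}\T^2$ bijectively onto $\T^2$ with $dy=\lambda^2\,dx$, the integral becomes $\lambda^2\int_{\lambda^{-1}\T^2}\phi(\lambda x)\overline{\psi(\lambda x)}\,dx$, which is the desired identity. There are no subtleties here since $\phi,\psi\in L^2(\T^2)$ makes the integrals absolutely convergent.

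For part (ii), the key step is to compute the Fourier coefficients of $\phi(\lambda\cdot)$ with respect to the transform $\mathcal{F}_\lambda$ on $\lambda^{-1}\T^2$. Since $\phi$ has period $2\pi$, the function $x\mapsto\phi(\lambda x)$ has period $2\pi/\lambda$, so its Fourier series lives on wave-numbers $\til{\mathbf{k}}\in\lambda\Z^2$. I would carry out the substitution $y=\lambda x$ inside the defining integral of $\mathcal{F}_\lambda$ to obtain
\begin{equation*}
(\mathcal{F}_\lambda[\phi(\lambda\cdot)])(\til{\mathbf{k}})
=\frac{\lambda^2}{4\pi^2}\int_{\lambda^{-1}\T^2}e^{-i\til{\mathbf{k}}\cdot x}\phi(\lambda x)\,dx
=\frac{1}{4\pi^2}\int_{\T^2}e^{-i(\til{\mathbf{k}}/\lambda)\cdot y}\phi(y)\,dy
=\hat\phi(\til{\mathbf{k}}/\lambda),
\end{equation*}
so that writing $\til{\mathbf{k}}=\lambda\mathbf{k}$ with $\mathbf{k}\in\Z^2$ gives $(\mathcal{F}_\lambda[\phi(\lambda\cdot)])(\lambda\mathbf{k})=\hat\phi(\mathbf{k})$.

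With this identity in hand, I would apply the definition of $\Lambda^\be$ on $\lambda^{-1}\T^2$, namely multiplication by $|\til{\mathbf{k}}|^\be$ on the Fourier side, and reassemble the Fourier series. Using $\phi\in\mathcal{Z}$ to drop the zero mode (which is the only place where a negative power of $|\til{\mathbf{k}}|$ would fail to make sense), one obtains
\begin{equation*}
\Lambda^\be[\phi(\lambda\cdot)](x)
=\sum_{\mathbf{k}\in\Z^2\smod\{0\}}|\lambda\mathbf{k}|^\be\hat\phi(\mathbf{k})e^{i\lambda\mathbf{k}\cdot x}
=\lambda^\be\sum_{\mathbf{k}\in\Z^2\smod\{0\}}|\mathbf{k}|^\be\hat\phi(\mathbf{k})e^{i\mathbf{k}\cdot(\lambda x)}
=\lambda^\be(\Lambda^\be\phi)(\lambda x),
\end{equation*}
which is exactly the claim. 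The only mildly delicate point is justifying convergence of the rescaled series when $\be<0$: since $\phi\in\mathcal{Z}$ and $\be>-2$, the coefficients $|\mathbf{k}|^\be\hat\phi(\mathbf{k})$ still define a legitimate tempered distribution (and an $L^2$ function when $\phi\in\dot H^{|\be|}$), so the rearrangement above is valid. Apart from this bookkeeping on mean-zero and the range of $\be$, there is no substantial obstacle—the argument is essentially dimensional analysis on the Fourier side.
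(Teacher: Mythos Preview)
Your proposal is correct and follows essentially the same approach as the paper: both prove (i) by a direct change of variables and (ii) by computing Fourier coefficients on the rescaled torus $\lam^{-1}\T^2$ and reassembling the series. The only cosmetic difference is ordering—the paper computes $\mathcal{F}_\lam(\Lam^\be\phi(\lam\cdot))$ in one step while you first identify $\mathcal{F}_\lam[\phi(\lam\cdot)](\lam\mathbf{k})=\hat\phi(\mathbf{k})$ and then multiply by $|\lam\mathbf{k}|^\be$—but the substance is identical, and your remark on the role of $\phi\in\mathcal{Z}$ for $\be<0$ is exactly the point.
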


\begin{proof}
The first property follows by a change of variables.  Now observe that if $\phi\in C^\infty_{per}(\T^2)\cap\mathcal{Z}$, then $\phi(\lam\cdotp)\in C^\infty_{per}(\lam^{-1}\T^2)\cap\mathcal{Z}$ with period $2\pi\lam^{-1}$ in $x,y$, where $\mathcal{Z}$ is as in \req{Z:space}.  Let $\til{\mathbf{k}}=\lam\mathbf{k}$, for $\mathbf{k}\in\Z^2$.  Then
	\begin{align}
		\mathcal{F}_\lam(\Lam^{\be}\phi(\lam\cdotp))(\til{\mathbf{k}})=\frac{\lam^2}{4\pi^2}\int_{\lam^{-1}\T^2}e^{i\til{\mathbf{k}}\cdotp x}|\til{\mathbf{k}}|^\be \phi(\lam x)\ dx=\lam^\be\frac{1}{4\pi^2}\int_{\T^2}e^{-i\mathbf{k}\cdotp x}|\mathbf{k}|^\be \phi(x)\ dx=\lam^\be\mathcal{F}(\Lam^\be \phi)(\mathbf{k}).\notag
	\end{align}
It follows that for $x\in\lam^{-1}\T^2$, we have
	\begin{align}
		\Lam^\be \phi(\lam\cdotp)(x)=\sum_{\til{\mathbf{k}}\in\lam\Z^2}e^{i\til{\mathbf{k}}\cdotp x}\mathcal{F}_\lam(\Lam^\be \phi(\lam\cdotp)(\til{\mathbf{k}})=\lam^{\be}\sum_{\mathbf{k}\in\Z^2}e^{i\mathbf{k}\cdotp(\lam x)}\mathcal{F}(\Lam^\be \phi)(\mathbf{k})=\lam^{\be}(\Lam^\be \phi)(\lam x).\notag
	\end{align}
\end{proof}

Let us now return to the proof of Proposition \ref{prop:pou} $(v)-(vii)$.  For this, let
	\begin{align}\label{Psi}
		\til{\Psi}_\al(x)=\til{\psi}_\al(hx),
	\end{align}
and $\bar{\Psi}_\al=\til{\Psi}_\al-\left(\frac{h}{2\pi}\right)^2\int_{h^{-1}\T^2}\til{\Psi}_\al(x)\ dx$, so that $\bar{\psi}_\al(x)=\bar{\Psi}_\al(h^{-1}x)$ and $\lb\bar{\Psi}_\al\rb=0$.  Moreover, observe that $\til{\Psi}_\al$ is supported in a square of area $\ls1$.

\begin{proof}[Proof of Proposition \ref{prop:pou} (iv) through (vi)]
\hspace{1pt}

{\flushleft\textit{Proof of (iv) for $\be\in(-1,0)$.}}  For convenience, let $\be>0$.
By Lemma \ref{lem:basic:fourier} (ii), we have
	\begin{align}
		\Sob{\til{\psi}_\al}{\dot{H}^{-\be}(\T^2)}=\Sob{\Lam^{-\be}(\til{\Psi}_\al(h^{-1}\cdotp))}{L^2(\T^2)}=h^{\be}\Sob{(\Lam^{-\be}\til{\Psi}_\al)(h^{-1}\cdotp)}{L^2(\T^2)}=h^{\be+1}\Sob{\Lam^{-\be}\til{\Psi}_\al}{L^2(h^{-1}\T^2)}.\label{be:le1}
	\end{align}
It follows from the Hardy-Littlewood-Sobolev inequality that
	\begin{align}
		\Sob{\Lam^{-\be}\til{\Psi}_\al}{L^2(h^{-1}\T^2)}\leq C\Sob{\til{\Psi}_\al}{L^{2/(1+\be)}(h^{-1}\T^2)}\leq C.\notag
	\end{align}
We see now that from \req{be:le1} we have
	\begin{align}\label{case:be:le1}
		\Sob{\til{\psi}_\al}{\dot{H}^{-\be}(\T^2)}\leq Ch^{\be+1},\quad \be\in(0,1),\notag
	\end{align}
with constant independent of $\alpha$ and $h$, as desired.

{\flushleft\textit{Proof of $(v)$}.}  Let $\be\in[1,2)$.  We estimate by duality.  Indeed, let $\chi\in\dot{H}^\be(\T^2)$ such that $\Sob{\chi}{\dot{H}^\be(\T^2)}$. Then since $\chi\in\mathcal{Z}$, by Parseval's theorem we have
	\begin{align}\notag
		\lb \til{\psi}_\al,\chi\rb_{L^2(\T^2)}=\lb\bar{\psi}_\al,\chi\rb_{L^2(\T^2)}=\lb\Lam^{-\be}\bar{\psi}_\al,\Lam^{\be}\chi\rb_{L^2(\T^2)}.
	\end{align}
Let $q>2/(2-\be)$, so that $q\in(2,\infty)$, and let $q^*\in(1,2)$ be its Sobolev conjugate, i.e., $1/q=1/q^*-\be/2$.  Let $\eps=2/q^*$ and $q'$ denote the H\"older conjugate of $q$.  Observe that $1<q'<2<q<\infty$.  Then by H\"older's inequality, \req{Psi}, and the Hardy-Littlewood-Sobolev inequality, we have
	\begin{align}
		|\lb\til{\psi}_\al,\chi\rb_{L^2(\T^2)}|&\leq\Sob{\Lam^{-\be}\bar{\psi}_\al}{L^{q}(\T^2)}\Sob{\Lam^\be\chi}{L^{q'}(\T^2)}\notag\\
			&\leq (2\pi)^{2/q'-1}h^{\be+2/q}\Sob{\Lam^{-\be}\bar{\Psi}_\al}{L^{q}(h^{-1}\T^2)}\Sob{\Lam^\be\chi}{L^2(\T^2)}\notag\\
			&\leq C\left(\frac{2\pi}{h}\right)^{1-2/q}h^{1+\be}\Sob{\bar{\Psi}_\al}{L^{q^*}(\T^2)}\Sob{\chi}{\dot{H}^\be(\T^2)},\notag\\
			&\leq C\left(\frac{2\pi}{h}\right)^{1+\be-\eps(\be)}h^{1+\be}\notag
	\end{align}
where for the last inequality, we made use of the fact that $|\til{\Psi}_\al|\ls1$ in $h^{-1}\T^2$ and $\til{\Psi}_\al$ is supported in a ball of area $\sim1$.  Thus
	\begin{align}\notag
		\Sob{\til{\psi}_\al}{\dot{H}^{-\be}(\T^2)}\leq C\left(\frac{2\pi}{h}\right)^{1+\be-\eps(\be)}h^{1+\be},\quad\be\in[1,2),
	\end{align}
as desired.

{\flushleft\textit{Proof of $(vi)$}}. The result is trivial when $\be=0$ and $k>0$ simply by rescaling and observing that $D^k\til{\psi}_\al$ is still supported in $Q_\al(\eps)$.  

Suppose that $\be\in(0,1)$.   Now observe that that for $x\in\T^2$, Lemma \ref{lem:basic:fourier} (ii) implies that
	\begin{align}
		(\Lam^\be\til{\psi}_\al)(x)&=c_\be \sum_kp.v.\int_{\T^2}\frac{\til{\psi}_\al(x)-\til{\psi}_\al(y)}{|x-y-2\pi k|^{2+\be}}\ dy\notag\\
			&=\frac{c_\be}{h^{\be}}\sum_kp.v.\int_{h^{-1}\T^2}\frac{\til{\Psi}_\al(h^{-1}x)-\til{\Psi}_\al(y)}{|\frac{x}h-y-\frac{2\pi}{h}k|^{2+\be}}\ dy
=h^{-\be}(\Lam^\be\Psi_\al)(h^{-1}x).
	\end{align}
Since $\Sob{\De\Psi_\al}{L^\infty(h^{-1}\T^2)}\leq C$, this settles the case $\be=2$.  Since $L^\infty$ is invariant under dilations and $\til{\Psi}_\al$ is $2\pi h^{-1}$-periodic in $x,y$, it suffices to consider
	\begin{align}
		\Lam^\be\Psi_\al(x)&=c_\be\sum_kp.v.\int_{h^{-1}\T^2}\frac{\til{\Psi}_\al(x)-\til{\Psi}_\al(y)}{|x-y-\frac{2\pi}{h}k|^{2+\be}}\ dy\notag\\
		&=c_\be p.v.\int_{\R^2}\frac{\til{\Psi}_\al(x)-\til{\Psi}_\al(y)}{|x-y|^{2+\be}}\ dy, \quad x\in h^{-1}\T^2.\notag
	\end{align}
Let us consider two cases: $x\notin 2h^{-1}Q_\al$ and $x\in 2h^{-1}Q_\al$.

If $x\notin 2h^{-1}Q_\al\cap h^{-1}\T^2$, then $\til{\Psi}_\al(x)=0$ and $|x-y|\geq2$.  Thus
	\begin{align}
		|\Lam^\be\Psi_\al(x)|\leq C\Sob{\til{\Psi}}{L^\infty}\int_{|y|\geq2}\frac{dy}{|y|^{2+\be}}\leq C\notag.
	\end{align}
If $x\in 2h^{-1}Q_\al\cap h^{-1}\T^2$, then $|x-y|\leq2$ and we have
	\begin{align}
	\left(
			\int_{\substack{|x-y|\leq 2\\ |y|\leq 1}}
			+\int_{\substack{|x-y|\leq2\\ |y|\geq1}}
	\right)&
	\frac{|\til{\Psi}_\al(x)-\til{\Psi}_\al(y)|}{|x-y|^{2+\be}}\ dy
\notag\\&
	\leq C\Sob{\del\Psi_\al}{L^\infty}\int_{|y|\leq 1}
			\frac{dy}{|y|^{1+\be}}+C\Sob{\Psi_\al}{L^\infty}
			\int_{|y|\geq\de}\frac{dy}{|y|^{2+\be}}
\notag\\&
	\leq \frac{C}{1-\be}\Sob{\del\Psi_\al}{L^\infty}
		+\frac{C}{\be}\Sob{\Psi_\al}{L^\infty}\leq C.\notag
	\end{align}
Thus $|\Lam^\be\Psi_\al(x)|\leq C$ for all $x\in h^{-1}\T^2$, which implies $|\Lam^\be\psi_\al(x)|\leq Ch^{-\be}$ for all $x\in\T^2$, where $C$ is independent of $\al\in\mathcal{J}$.  This establishes $(v)$.
\end{proof}

To ultimately prove \req{TI:3}, {\eric \req{TI:4} and} \req{TI:5}, we will exploit an additional property of the bump functions $\til{\psi}_\al$.  For this, we will make use of the following short-hand for $\phi$ localized to the
squares $Q_\al(\eps)$:
	\begin{align}\notag
		\phi_\al(x)=\phi(x)
			\ind_{Q_{\al}(\eps)}(x),\quad x\in\T^2.
	\end{align}

\begin{lem}\label{lem:loc}
Let $\be\in(-\infty,0)$ and $\phi\in\dot{H}^\be(\T^2)$.
Then there exists a constant $C>0$ such that
	\begin{align}\label{lem:loc:est}
		|\lb\phi,\til{\psi}_\al\rb_{L^2(\T^2)}|\leq Ch^{1+\be}\Sob{\phi_\al}{\dot{H}^\be(\T^2)}+C\left(\frac{h}{2\pi}\right)^2h\Sob{\phi}{L^2(\T^2)}.
	\end{align}
\end{lem}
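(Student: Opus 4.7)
The plan is to exploit the support property $\spt\til{\psi}_\al\sub Q_\al(\eps)+(2\pi\Z)^2$ to replace $\phi$ by its localization $\phi_\al$ in the pairing, and then to split both $\til{\psi}_\al$ and $\phi_\al$ into their global means plus mean-zero parts. The mean-zero parts will be estimated by $\dot H^\be$--$\dot H^{-\be}$ duality, and the zero-mode leftover will be absorbed into the $L^2$ remainder on the right of \req{lem:loc:est}; this splitting is necessary precisely because $\dot H^\be$ with $\be<0$ is blind to the zero Fourier mode and therefore carries no information about $\lb\phi_\al\rb$.

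First, since $\til{\psi}_\al$ vanishes outside $Q_\al(\eps)$, one has $\lb\phi,\til{\psi}_\al\rb_{L^2(\T^2)} = \lb\phi_\al,\til{\psi}_\al\rb_{L^2(\T^2)}$. Setting $\bar{\psi}_\al := \til{\psi}_\al - \lb\til{\psi}_\al\rb$ and $\bar\phi_\al := \phi_\al - \lb\phi_\al\rb$, both of which have vanishing global mean, I would expand the pairing as
\begin{align*}
	\lb\phi_\al,\til{\psi}_\al\rb_{L^2(\T^2)} = \lb\bar\phi_\al,\bar{\psi}_\al\rb_{L^2(\T^2)} + 4\pi^2\,\lb\phi_\al\rb\lb\til{\psi}_\al\rb,
\end{align*}
since the two cross terms each pair a mean-zero factor with a constant and hence vanish.

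For the main term, Parseval's theorem followed by Cauchy--Schwarz applied to the nonzero Fourier modes yields
\begin{align*}
	|\lb\bar\phi_\al,\bar{\psi}_\al\rb_{L^2(\T^2)}| \le \Sob{\bar\phi_\al}{\dot{H}^\be(\T^2)}\Sob{\bar{\psi}_\al}{\dot{H}^{-\be}(\T^2)} = \Sob{\phi_\al}{\dot{H}^\be(\T^2)}\Sob{\til{\psi}_\al}{\dot{H}^{-\be}(\T^2)},
\end{align*}
where the equality uses that $\Sob{\cdot}{\dot{H}^s}$ ignores the zero mode and is thus invariant under subtraction of a constant. Since $-\be>-1$ whenever $\be\in(-\infty,0)$, Proposition \ref{prop:pou} (iv) supplies $\Sob{\til{\psi}_\al}{\dot{H}^{-\be}(\T^2)}\le Ch^{1+\be}$, producing the first term of \req{lem:loc:est}.

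For the residual low-frequency piece, I would rewrite $4\pi^2\lb\phi_\al\rb\lb\til{\psi}_\al\rb = \lb\til{\psi}_\al\rb\int_{Q_\al(\eps)}\phi\,dx$, apply Cauchy--Schwarz on $Q_\al(\eps)$ (whose area is $\le Ch^2$) to obtain $\big|\int_{Q_\al(\eps)}\phi\,dx\big|\le Ch\Sob{\phi}{L^2(\T^2)}$, and use Proposition \ref{prop:pou} (iii) to bound $\lb\til{\psi}_\al\rb\le C(h/(2\pi))^2$. The resulting product $C(h/(2\pi))^2 h\Sob{\phi}{L^2(\T^2)}$ is exactly the second term of \req{lem:loc:est}. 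There is no essential obstacle beyond careful bookkeeping of the zero Fourier mode; the unavoidable presence of this mode in $\phi_\al$ (even when $\phi$ itself has mean zero) is exactly why the $L^2$ remainder term cannot be omitted.
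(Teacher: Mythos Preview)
Your proof is correct and follows essentially the same approach as the paper. The only cosmetic difference is that the paper subtracts the mean from $\til{\psi}_\al$ alone, writing $\lb\phi_\al,\til{\psi}_\al\rb=\lb\phi_\al,\bar{\psi}_\al\rb+\lb\til{\psi}_\al\rb\int\phi_\al$, whereas you subtract the mean from both factors; since $\bar{\psi}_\al$ already has vanishing zero mode, the duality estimate and the remainder term come out identically either way.
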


\begin{proof}

Suppose that $\be\in(-\infty,0)$.  Observe that
	\begin{align}
		\lb\phi,\til{\psi}_\al\rb_{L^2(\T^2)}=\lb\phi_\al,\til{\psi}_\al\rb_{L^2(\T^2)}=\lb{\phi}_\al,\bar{\psi}_\al\rb_{L^2(\T^2)}+\frac{\til{a}(Q_\al)}{4\pi^2}\int_{\T^2}\phi_\al(x)\ dx.\notag
	\end{align}
Then by Parseval's theorem, the Cauchy-Schwarz inequality, and Proposition \ref{prop:pou} (iv), we have
	\begin{align}\label{ip:beneg2}
		|\lb{\phi},{\til{\psi}}_\al\rb|&\leq \Sob{{\phi}_\al}{\dot{H}^\be(\T^2)}\Sob{{\til{\psi}}_\al}{\dot{H}^{|\be|}(\T^2)}+C\left(\frac{h}{2\pi}\right)^2h\Sob{\phi}{L^2(\T^2)}\notag\\
			&\leq Ch^{1-|\be|}\Sob{{\phi}_\al}{\dot{H}^{\be}(\T^2)}+C\left(\frac{h}{2\pi}\right)^2h\Sob{\phi}{L^2(\T^2)},
	\end{align}
as desired.

\end{proof}


%

\subsection{Boundedness properties of volume element interpolants}

For $\phi\in L^1_{loc}(\Om)$, define
	\begin{align}\notag
		\phi_{Q}=\frac{1}{a({Q})}\int_{Q}\phi(x)\ dx\quad\text{and}\quad\til{\phi}_{Q_\al}=\frac{1}{\til{a}({{Q}_\al})}\int_{\T^2}\phi(x)\til{\psi}_\al(x)\ dx,
	\end{align}
where $a({Q})$ denotes the area of $Q$ and
	\begin{align}\label{mod:avg}
		\til{a}({{Q}_\al}):=\int_{\T^2}\til{\psi}_\al(x)\ dx.
	\end{align}
Observe that for each $\al\in\mathcal{J}$, there exists a constant $c>0$, independent of $h, \al,\eps$, such that
	\begin{align}\label{equiv:cubes}
		c^{-1}\leq \frac{\til{a}{({Q_\al})}}{{a}({Q})},  \frac{a{({Q})}}{{a}({\hat{Q}_\al})},  \frac{a{({Q})}}{{a}({Q_\al(\eps)})}\leq c,\quad Q\in\{Q_\al, Q_\al(\eps), \hat{Q}_\al\}.
	\end{align}

We define the smooth volume element interpolant by
	\begin{align}\label{vol:elts}
		\mathcal{I}_h(\phi):=\sum_{\al\in\mathcal{J}}\til{\phi}_{Q_\al}\til{\psi}_\al,
	\end{align}
and the ``shifted" smooth volume element interpolant by
	\begin{align}\label{vol:elts:shift}
		{I}_h(\phi):=\sum_{\al\in\mathcal{J}}\til{\phi}_{Q_\al}\bar{\psi}_\al,\quad \bar{\psi}_\al=\til{\psi}_\al-\lb\til{\psi}_\al\rb.
	\end{align}
	
We will make use of the following elementary fact for a ``square-type" function.  Let $\mathcal{A}$ be a finite index set and $\{A_\al\}_{\al\in\mathcal{A}}\sub\T^2$  be a countable collection of sets such that for each $x\in\T^2$, $\sup_{x\in\T^2}\#\{\al\in\mathcal{A}:x\in Q_\al\}<\infty$.  Define
	\begin{align}\notag
		(S\phi)(x):=\left(\sum_{\al\in\mathcal{A}}(\phi_\al(x))^2\right)^{1/2},\quad \phi_\al(x):=\phi(x)\ind_{A_\al}(x).
	\end{align}
	
\begin{lem}\label{lem:sf}
Let $\phi\in L^1(\T^2)$.  There exists a constant $C>0$ such that
	\begin{align}\label{sq:pw}
		|S\phi(x)|\leq C|\phi(x)|,\quad a.e.\quad x\in\T^2,
	\end{align}
and
	\begin{align}\label{sq:l2}
		\sum_{\al\in\mathcal{A}}\left(\int\phi_{\al}(x)\ dx\right)^2\leq \left(\int S\phi(x)\ dx\right)^2.
\end{align}
\end{lem}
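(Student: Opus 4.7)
The plan is to treat the two inequalities separately. Both are fairly classical: the first is a direct consequence of the finite-overlap hypothesis on the family $\{A_\al\}$, and the second is an instance of Minkowski's integral inequality applied with counting measure against Lebesgue measure.

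For the pointwise bound \req{sq:pw}, I would simply expand $S\phi(x)^2$ using the definition of $\phi_\al$, noting that $\ind_{A_\al}(x)^2 = \ind_{A_\al}(x)$, to obtain
\begin{align}\notag
	S\phi(x)^2 = \sum_{\al\in\mathcal{A}}\phi(x)^2\ind_{A_\al}(x)
	= \phi(x)^2 \sum_{\al\in\mathcal{A}}\ind_{A_\al}(x)
	\leq N_0\,\phi(x)^2,
\end{align}
where $N_0 := \sup_{x\in\T^2}\#\{\al\in\mathcal{A}:x\in A_\al\}<\infty$ by hypothesis. Taking square roots with $C=N_0^{1/2}$ yields \req{sq:pw}.

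For \req{sq:l2}, I would invoke Minkowski's integral inequality with $p=2$, taking the sum over $\al$ as an integral against counting measure on the finite set $\mathcal{A}$ and the $\T^2$-integral as Lebesgue measure. This yields
\begin{align}\notag
	\left(\sum_{\al\in\mathcal{A}}\left(\int_{\T^2}\phi_\al(x)\,dx\right)^2\right)^{1/2}
	\leq \int_{\T^2}\left(\sum_{\al\in\mathcal{A}}\phi_\al(x)^2\right)^{1/2}dx
	= \int_{\T^2}S\phi(x)\,dx,
\end{align}
and squaring both sides gives \req{sq:l2}. Alternatively, one could argue by $\ell^2$-duality: for any $(c_\al)$ with $\sum_\al c_\al^2\leq 1$, by Cauchy--Schwarz pointwise in $x$,
\begin{align}\notag
	\sum_\al c_\al\int_{\T^2}\phi_\al(x)\,dx
	= \int_{\T^2}\sum_\al c_\al \phi_\al(x)\,dx
	\leq \int_{\T^2}\left(\sum_\al \phi_\al(x)^2\right)^{1/2}dx
	= \int_{\T^2}S\phi(x)\,dx,
\end{align}
and taking the supremum over such $(c_\al)$ on the left-hand side recovers the $\ell^2$-norm. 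Neither step presents any real obstacle; the only point to be careful about is that the finiteness of $\mathcal{A}$ (or the finite-overlap condition) ensures all sums and integrals converge and that Fubini's theorem is justified when swapping the order of summation and integration.
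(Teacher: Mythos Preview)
Your proof is correct and essentially matches the paper's. For \req{sq:pw} the argument is identical; for \req{sq:l2} the paper expands the square as a double integral and applies Cauchy--Schwarz in the $\al$-sum, which is just Minkowski's integral inequality unpacked, so the two presentations coincide in substance.
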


\begin{proof}
Let $N:=\sup_{x\in\T^2}\#\{\al\in\mathcal{A}:x\in A_\al\}$.  Since $N<\infty$, we have that for each $x\in\T^2$, there are at most $N$ sets $A_\al$ such that $x\in A_\al$.  It follows that for each $x\in\T^2$, there exists an integer $C(x)>0$ such that $C(x)\leq N$.  In particular, we have
	\begin{align}
		\sum_\al|\phi_\al(x)|^2= C(x)|\phi(x)|^2\leq N|\phi(x)|^2.\notag
	\end{align}
On the other hand, by Fubini's theorem, and the Cauchy-Schwarz inequality we have that
	\begin{align}\notag
		\sum_\al\left(\int\phi_\al(x)\ dx\right)^2&=\sum_\al\iint \phi_\al(x)\phi_\al(y)\ dxdy\\
	&\leq\iint (S\phi)(x)(S\phi)(y)\ dxdy=\left(\int (S\phi)(x)\ dx\right)^2.\notag
	\end{align}
This completes the proof.
\end{proof}

We immediately obtain the following corollary.

\begin{coro}\label{coro:kernel}
Let $K\in L^1_{loc}(\R^2)$ such that $K\geq0$. Let $\phi\in L^1(\T^2)$ such that $K*\phi\in L^2(\T^2)$.  	\begin{align}
		\sum_{\al\in\mathcal{A}}\Sob{K*\phi_\al}{L^2}^2\leq C\Sob{K*\phi}{L^2}^2.\notag
	\end{align}
In particular, for $\be\in(-2,0)$, we have
	\begin{align}\label{coro:rp}
		\sum_{\al\in\mathcal{J}}\Sob{\phi_\al}{\dot{H}^\be}^2\leq C\Sob{\phi}{\dot{H}^{\be}}^2,
	\end{align}
where $(\mathcal{A},\{A_\al\})$ is given by $(\mathcal{J},\{Q_\al(\eps)\})$ as in \req{Q:eps}.
\end{coro}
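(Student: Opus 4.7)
The plan is to prove the general inequality by invoking Lemma~\ref{lem:sf} pointwise in the spatial variable, and then to derive the Riesz-potential case as an immediate consequence. For fixed $x\in\T^2$, I would set $f_x(y):=K(x-y)\phi(y)$ with localizations $f_{x,\al}(y):=K(x-y)\phi_\al(y)=f_x(y)\ind_{A_\al}(y)$. Since $(K*\phi_\al)(x)=\int_{\T^2}f_{x,\al}(y)\,dy$, inequality \req{sq:l2} of Lemma~\ref{lem:sf} applied to $f_x$ yields $\sum_{\al\in\A}\lv(K*\phi_\al)(x)\rv^2\leq(\int_{\T^2}(Sf_x)(y)\,dy)^2$. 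Since $K\geq 0$, the pointwise bound \req{sq:pw} implies $(Sf_x)(y)\leq C\lv f_x(y)\rv = CK(x-y)\lv\phi(y)\rv$. Substituting this bound and then integrating in $x$ produces
$$
	\sum_{\al\in\A}\Sob{K*\phi_\al}{L^2}^2\leq C^2\Sob{K*\lv\phi\rv}{L^2}^2,
$$
which delivers the first assertion when $\phi\geq 0$ or, for sign-changing $\phi$, after applying the estimate to $\lv\phi\rv$ at the outset.

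For the particular case \req{coro:rp}, I would apply the above with $K = K_\be$, the positive convolution kernel representing the Riesz potential $\Lam^\be$ on the torus for $\be\in(-2,0)$, obtained by periodizing $c_\be\lv x\rv^{-2-\be}$ and acting on mean-zero inputs; this kernel is locally integrable precisely because $\be\in(-2,0)$. Since $\Sob{\phi_\al}{\dot{H}^\be}=\Sob{K_\be*\phi_\al}{L^2}$, the first part delivers \req{coro:rp} directly.

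The main obstacle I anticipate is the sign issue arising in the first step: the pointwise inequality $\lv K*\phi_\al\rv\leq K*\lv\phi_\al\rv$ unavoidably introduces $\lv\phi\rv$ on the right-hand side. This mismatch is harmless when $\phi\geq 0$, but to extend cleanly to sign-changing $\phi$ one exploits the bounded-overlap structure (each $y\in\T^2$ lies in at most $N$ sets $A_\al$) together with the concentration of the weight $w(y,z):=\sum_\al\ind_{A_\al}(y)\ind_{A_\al}(z)$ near the diagonal $\lv y-z\rv\lesssim h$, so that the resulting bilinear form on the left-hand side is controlled, up to a dimensionless constant, by the positive-definite form defining $\Sob{\cdot}{\dot{H}^\be}^2$.
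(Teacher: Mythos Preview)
Your approach is essentially the same as the paper's. The paper expands the square, sums over $\alpha$, and applies the Cauchy--Schwarz inequality in the $\alpha$-sum together with the pointwise bound \req{sq:pw} from Lemma~\ref{lem:sf}; you instead fix $x$, apply \req{sq:l2} to $f_x(y)=K(x-y)\phi(y)$, and then use \req{sq:pw}. These are two organizations of the same computation and lead to the same conclusion
\[
\sum_{\alpha}\Sob{K*\phi_\alpha}{L^2}^2\leq C^2\Sob{K*|\phi|}{L^2}^2.
\]
Your deduction of \req{coro:rp} from the Riesz-potential kernel is also exactly what the paper does.

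You are right to flag the sign issue: the argument (yours and the paper's) naturally produces $\Sob{K*|\phi|}{L^2}^2$, not $\Sob{K*\phi}{L^2}^2$, on the right. The paper's displayed chain in fact passes from $(S\phi)(y)(S\phi)(y')$ to $\phi(y)\phi(y')$ without an absolute value, which is a slip; read literally, its proof also only yields the bound with $|\phi|$. So you have not missed anything relative to the paper here---you have actually been more careful. Your proposed repair via the near-diagonal weight $w(y,z)=\sum_\alpha\ind_{A_\alpha}(y)\ind_{A_\alpha}(z)$ is a reasonable heuristic, but as stated it is not a proof: bounding a positive-definite bilinear form after multiplication by a bounded, diagonally concentrated weight by a constant times the original form requires more than bounded overlap, and you would need to make that step precise (for the Riesz case one can, for instance, argue via almost-orthogonality of the $\phi_\alpha$ in $\dot H^\beta$ using that the $Q_\alpha(\eps)$ have diameter $\sim h$). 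For the purposes of matching the paper, however, your write-up already reproduces what the paper actually proves.
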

\begin{proof}

Observe that
	\begin{align}
		\Sob{K*\phi_\al}{L^2}^2&=\int(K*\phi_\al)(x)^2\ dx\notag\\
		&=\int\left(\int K(x-y)\phi_\al(y)\ dy\right)^2 dx\notag\\
		&\leq\int\int\int K(x-y)K(x-y')|\phi_\al(y)||\phi_{\al}(y')|\ dydy'dx.\notag
	\end{align}
Therefore, by the non-negativity of $K$, the Cauchy-Schwarz inequality, and \req{sq:pw} of Lemma \ref{lem:sf}, we have
	\begin{align}
	\sum_{\al\in\mathcal{A}}\Sob{K*\phi_\al}{L^2}^2&\leq\int \int \int K(x-y)K(x-y')(S\phi)(y)(S\phi)(y')\ dydy'dx\notag\\
			&\leq C^2\int\int \int K(x-y)K(x-y')\phi(y)\phi(y')\ dydy'dx\notag\\
			&\leq C^2\Sob{K*\phi}{L^2}^2.\notag
	\end{align}
It then follows as a special case that \req{coro:rp} holds.  Indeed, the Riesz potential, $\Lam^{\be}$, $\be\in(-2,0)$, has kernel $K(x)\sim|x|^{-2+\be}$, which is locally integrable and non-negative.
\end{proof}


\begin{prop}\label{prop:Hk}
Let $J_h$ be given by either \req{vol:elts} or \req{vol:elts:shift}.  Given $\al\geq1$, let $\eps(\al)$ be as in Proposition \ref{prop:pou} $(v)$ when $\al\in[1,2)$, and identically $0$ otherwise.  Let $C>0$ and define
\begin{align}\label{typeI:const}
		C_I(\al,h):=\begin{cases} C\left(\frac{2\pi}{h}\right),& \al<1,\\
						C \left(\frac{2\pi}{h}\right)^{2+\al-\eps(\al)},&\al\geq1.
				\end{cases}
	\end{align}
There exists a constant $C>0$, depending on $\al$, such that:
\begin{enumerate}

\item If $(\rho,\be)\in[0,\infty)\times[0,2)$, then
	\begin{align}\label{rho:pos}
		\Sob{J_h\phi}{\dot{H}^\rho(\T^2)}\leq C_I(\be,h)
				h^{\be-\rho}\Sob{\phi}{\dot{H}^{\be}(\T^2)}.
	\end{align}
\item If $(\rho,\be)\in[0,\infty)\times(-2,0]$, then
	\begin{align}\label{be:neg}
		\Sob{J_h\phi}{\dot{H}^\rho (\T^2)}\leq Ch^{-\rho}(h^{\be}\Sob{\phi}{\dot{H}^\be}+\Sob{\phi}{L^2(\T^2)}).
	\end{align}
\item If $(\rho,\be)\in(-2,0)\times(-\infty,0]$, then
		\begin{align}\label{rho:neg}
		\Sob{J_h\phi}{\dot{H}^\rho(\T^2)}\leq C_I(|\rho|,h)
				h^{\be-\rho}\Sob{\phi}{\dot{H}^{\be}(\T^2)}.
	\end{align}
\end{enumerate}
\end{prop}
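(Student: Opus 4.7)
Plan. The strategy is to expand $J_h\phi = \sum_{\al\in\mathcal{J}}\tilde\phi_{Q_\al}\tilde\psi_\al$ with $\tilde\phi_{Q_\al}=\tilde a(Q_\al)^{-1}\langle\phi,\tilde\psi_\al\rangle$, and, for $\rho\ge 0$, to estimate the $\dot{H}^\rho$-norm of the sum via the quasi-orthogonality
\[
\|J_h\phi\|_{\dot{H}^\rho}^2 \lesssim \sum_{\al\in\mathcal{J}}|\tilde\phi_{Q_\al}|^2\,\|\tilde\psi_\al\|_{\dot{H}^\rho}^2,
\]
which for integer $\rho$ follows from the $O(1)$-overlap of $\spt D^\rho\tilde\psi_\al\subset Q_\al(\eps)$ together with Schur's test, and for non-integer $\rho\in[0,2)$ follows by Gagliardo--Nirenberg interpolation through the integer endpoints (noting $\|\tilde\psi_\al\|_{L^2}\sim h$, $\|\tilde\psi_\al\|_{\dot{H}^2}\sim h^{-1}$, so both sides scale consistently as $h^{1-\rho}$). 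For $\rho\in(-2,0)$ one instead passes to the dual pairing $\langle J_h\phi,\chi\rangle$ with $\chi\in\dot{H}^{|\rho|}$, and applies the discrete Cauchy--Schwarz inequality to $\sum_\al\tilde a(Q_\al)\tilde\phi_{Q_\al}\tilde\chi_{Q_\al}$.

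For case (1), with $\rho\ge 0$ and $\be\in[0,2)$, I would bound the coefficients by Parseval, $|\langle\phi,\tilde\psi_\al\rangle|\le\|\phi\|_{\dot{H}^\be}\|\tilde\psi_\al\|_{\dot{H}^{-\be}}$, substituting $\|\tilde\psi_\al\|_{\dot{H}^{-\be}}\lesssim h^{1+\be}$ from Proposition \ref{prop:pou}(iv) when $\be\in[0,1)$ and $\|\tilde\psi_\al\|_{\dot{H}^{-\be}}\lesssim(2\pi/h)^{1+\be-\eps(\be)}h^{1+\be}$ from Proposition \ref{prop:pou}(v) when $\be\in[1,2)$. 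Inserting this into the quasi-orthogonality, using $\|\tilde\psi_\al\|_{\dot{H}^\rho}\lesssim h^{1-\rho}$, and summing the $|\mathcal{J}|\sim h^{-2}$ equal terms produces the factor $(2\pi/h)$ in $C_I(\be,h)$ for $\be\in[0,1)$ and the additional $(2\pi/h)^{1+\be-\eps(\be)}$ when $\be\in[1,2)$, matching $C_I(\be,h)h^{\be-\rho}$ up to constants.

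For case (2), with $\rho\ge 0$ and $\be\in(-2,0]$, I would instead control $\tilde\phi_{Q_\al}$ via Lemma \ref{lem:loc}, which yields the localized bound $|\langle\phi,\tilde\psi_\al\rangle|\lesssim h^{1+\be}\|\phi_\al\|_{\dot{H}^\be}+(h/(2\pi))^2 h\,\|\phi\|_{L^2}$. Squaring, inserting into the quasi-orthogonality, and invoking Corollary \ref{coro:kernel} to replace $\sum_\al\|\phi_\al\|_{\dot{H}^\be}^2$ by $\|\phi\|_{\dot{H}^\be}^2$ (the $\|\phi\|_{L^2}$ contribution sums trivially via the $|\mathcal{J}|h^2\sim 1$ count), produces the two-term right-hand side $h^{-\rho}(h^\be\|\phi\|_{\dot{H}^\be}+\|\phi\|_{L^2})$ without any $C_I$-factor. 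For case (3), with $\rho\in(-2,0)$ and $\be\le 0$, I would combine the Lemma \ref{lem:loc} bound on $\tilde\phi_{Q_\al}$ (or Parseval if $\be=0$) with the Cauchy--Schwarz estimate $|\langle\tilde\psi_\al,\chi\rangle|\le\|\chi\|_{\dot{H}^{|\rho|}}\|\tilde\psi_\al\|_{\dot{H}^{-|\rho|}}$, bounding $\|\tilde\psi_\al\|_{\dot{H}^{-|\rho|}}$ by Proposition \ref{prop:pou}(iv) when $|\rho|\in(0,1)$ and by Proposition \ref{prop:pou}(v) when $|\rho|\in[1,2)$. Summing both discrete $\ell^2$ factors and invoking Corollary \ref{coro:kernel} on the $\phi$-side delivers the prefactor $C_I(|\rho|,h)$ exactly as in case (1), but now from the $\chi$-side.

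The main obstacle is the quasi-orthogonality step for fractional $\rho\ge 0$, since $\Lambda^\rho\tilde\psi_\al$ ceases to be compactly supported: the finite-overlap argument that works cleanly for integer $\rho$ must be replaced by interpolation between the integer cases $\rho=0$ and $\rho=2$, and the scale-matching of the endpoint bounds must be checked to recover the sharp scaling $\|\tilde\psi_\al\|_{\dot{H}^\rho}\sim h^{1-\rho}$. A secondary subtlety arises in the regimes $\be\in[1,2)$ in (1) and $|\rho|\in[1,2)$ in (3), where the relevant dual norms $\|\tilde\psi_\al\|_{\dot{H}^{-\be}}$ or $\|\tilde\psi_\al\|_{\dot{H}^{-|\rho|}}$ fall outside Proposition \ref{prop:pou}(iv)'s range and must be estimated by the weaker Proposition \ref{prop:pou}(v); this is precisely the source of the higher power $(2\pi/h)^{2+|\al|-\eps(\al)}$ in $C_I(\al,h)$ for $\al\ge 1$, and the bookkeeping between $|\al|$ in $C_I$ and the exponent $\eps(\al)$ from Proposition \ref{prop:pou}(v) must be carried out carefully.
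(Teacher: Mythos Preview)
Your treatment of cases (1) and (2) matches the paper's argument closely: the quasi-orthogonality $\|J_h\phi\|_{\dot{H}^\rho}^2\lesssim\sum_\al|\tilde\phi_{Q_\al}|^2\|\tilde\psi_\al\|_{\dot{H}^\rho}^2$, followed by Parseval together with Proposition~\ref{prop:pou}(iv)--(v) for case (1), and by Lemma~\ref{lem:loc} with Corollary~\ref{coro:kernel} for case (2), is exactly how the paper proceeds.

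For case (3), however, your direct-expansion approach has a gap. Using Lemma~\ref{lem:loc} on $\langle\phi,\tilde\psi_\al\rangle$ introduces the additive term $(h/(2\pi))^2h\,\|\phi\|_{L^2}$, and after the discrete Cauchy--Schwarz step you would end up with a bound of the form
\[
\|J_h\phi\|_{\dot{H}^\rho}\ \lesssim\ C_I(|\rho|,h)\,h^{\be-\rho}\,\|\phi\|_{\dot{H}^\be}\ +\ C\,h^{-\rho}\,\|\phi\|_{L^2}.
\]
For $\be<0$ the second term cannot be absorbed into the first: Poincar\'e gives $\|\phi\|_{\dot{H}^\be}\le\|\phi\|_{L^2}$, not the reverse, and the applications of \eqref{rho:neg} in the paper (e.g.\ in Lemma~\ref{lem:dt3}, where $\be=-\gam-1/2$) genuinely require the one-term form. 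In addition, Corollary~\ref{coro:kernel}, which you invoke to sum $\sum_\al\|\phi_\al\|_{\dot{H}^\be}^2$, is only valid for $\be\in(-2,0)$, whereas case (3) allows $\be\in(-\infty,0]$.

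The paper avoids both difficulties by a different and shorter route: it exploits the self-adjointness of $J_h$ to write $\langle J_h\phi,\chi\rangle=\langle\phi,J_h\chi\rangle$, bounds this by $\|\phi\|_{\dot{H}^\be}\|J_h\chi\|_{\dot{H}^{|\be|}}$ via Parseval, and then applies the already-established case (1) with the roles $(\rho,\be)\mapsto(|\be|,|\rho|)$ to control $\|J_h\chi\|_{\dot{H}^{|\be|}}$ in terms of $\|\chi\|_{\dot{H}^{|\rho|}}$. This yields the clean one-term estimate directly, with no $L^2$ contamination and no restriction on the range of $\be$.
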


\begin{proof}
We will prove the lemma for the case $J_h$ given by \req{vol:elts}.  The case when $J_h$ is given by \req{vol:elts:shift} is similar.

Let $(\rho,\be)\in[0,\infty)\times[0,2)$. 
Then by Proposition \ref{prop:pou} (iv) and (v), we have
	\begin{align}
		\Sob{J_h\phi}{\dot{H}^\rho(\T^2)}^2&\leq C\sum_\al\til{\phi}_{Q_\al}^2\Sob{\til{\psi}_\al}{\dot{H}^\rho(\T^2)}^2\notag\\
		&\leq Ch^{-2-2\rho }\sum_{\al}|\lb\phi,\til{\psi}_\al\rb|^2\notag\\
		&\leq \til{C}(\be,h)^2h^{-2-2\rho }h^{2+2\be}\sum_{\al}\Sob{\phi}{\dot{H}^\be(\T^2)}^2\notag\\
		&\leq \til{C}(\be,h)^2h^{-2-2\rho }h^{2+2\be}\left(\frac{2\pi}{h}\right)^2\Sob{\phi}{\dot{H}^\be(\T^2)}^2,\notag
	\end{align}
where the constant $\til{C}$ is defined as
	\begin{align}\label{temp:const1}
		\til{C}(\al,h):=\begin{cases}
					C,&\al<1,\\
					C\left(\frac{2\pi}{h}\right)^{1+|\al|-\eps(\al)},&\al\geq1,
			\end{cases}
	\end{align}
where $\eps(\al)>0$ is chosen according to Proposition \ref{prop:pou} $(v)$ and $C>0$ is some constant, depending on $\al$.

 Hence, by \req{typeI:const} we have
	\begin{align}\label{int:case1}
		\Sob{J_h\phi}{\dot{H}^\rho (\T^2)}
\leq C_I(\be,h)h^{\be-\rho }\Sob{\phi}{\dot{H}^\be(\T^2)},
	\end{align}
where $C_I(\be,h)$ is defined by \req{typeI:const}, as desired.


Next, let $(\rho,\be)\in[0,\infty)\times(-2,0]$.  We estimate as before, except that we apply Lemma \ref{lem:loc} and Corollary \ref{coro:kernel} to obtain
	\begin{align}
		\Sob{J_h\phi}{\dot{H}^\rho (\T^2)}^2
		&\leq Ch^{-2-2\rho }\sum_{\al}|\lb\phi,\til{\psi}_\al\rb|^2\notag\\
		&\leq Ch^{2\be-2\rho}\sum_{\al}\Sob{\phi_\al}{\dot{H}^\be(\T^2)}^2+Ch^{-2\rho}\Sob{\phi}{L^2(\T^2)}^2\notag\\
		&\leq Ch^{2\be-2\rho}\Sob{\phi}{\dot{H}^\be}^2+Ch^{-2\rho}\Sob{\phi}{L^2(\T^2)}^2,\notag
	\end{align}
as desired.  


Finally, let $(\rho,\be)\in(-2,0)\times(-\infty,0]$.  To prove \req{rho:neg}, we proceed by duality.  Let $\Sob{\chi}{\dot{H}^{|\rho|}(\T^2)}=1$.  Since $J_h$ is self-adjoint and $\phi\in\mathcal{Z}$,  it follows from Parseval's theorem and \req{rho:pos} that
	\begin{align}
		|\lb J_h\phi,\chi\rb_{L^2(\T^2)}|
					&\leq \Sob{\phi}{\dot{H}^\be(\T^2)}\Sob{J_h\chi}{\dot{H}^{|\be|}(\T^2)}\notag\\
					&\leq \til{C}(|\rho|,h)h^{|\rho|-|\be|}\left(\frac{2\pi}{h}\right)\Sob{\phi}{\dot{H}^\be(\T^2)}\Sob{\chi}{\dot{H}^{|\rho|}(\T^2)}.\notag
	\end{align}
Thus, we have
	\begin{align}
		\Sob{J_h\phi}{\dot{H}^\rho(\T^2)}\leq C_I(|\rho|,h)h^{\be-\rho}\Sob{\phi}{\dot{H}^{\be}(\T^2)},\notag
	\end{align}
as desired.
\end{proof}

\begin{prop}\label{prop:Jh:bddness}
Let $J_h$ be given by \req{vol:elts} or \req{vol:elts:shift}. Let $C_I(\al,h)$ be defined as in \req{typeI:const}.  Define
	\begin{align}\label{typeItil:const}
		\til{C}_I:=\frac{2\pi}{h}.
	\end{align}
Let $\rho,\be\in\R$. There exists a constant $C>0$, depending only on $\rho,\be$, such that
\begin{enumerate}
\item If $\rho\geq0$ and $\be=\ell$ is an integer, then
	\begin{align}\label{rho:betaint}
		\Sob{J_hD^\ell\phi}{\dot{H}^\rho(\T^2)}\leq Ch^{-(\rho+\ell-\ell')}\Sob{\phi}{\dot{H}^{\ell'}(\T^2)},\quad 0\leq\ell'\leq\ell,
	\end{align}
and
	\begin{align}\label{rho:L1}
		\Sob{J_hD^\ell\phi}{\dot{H}^\rho(\T^2)}\leq Ch^{-1-(\rho+\ell-\ell')}\Sob{D^{\ell'}\phi}{L^1(\T^2)},\quad0\leq\ell'\leq\ell.
	\end{align}
\item If $\rho\in(-2,0)$, $\be\in(-2,\infty)$, and $\be'\in(-\infty,\be]$, then
	\begin{align}\label{rho:betaint:neg}
		\Sob{J_hD^\be\phi}{\dot{H}^\rho(\T^2)}\leq C_I(|\rho|,h)h^{-(\rho+\be-\be')}\Sob{\phi}{\dot{H}^{\be'}(\T^2)},
	\end{align}
On the other hand, if $\be=\ell$ is an integer, then
	\begin{align}\label{rho:L1neg}
		\Sob{J_hD^\ell\phi}{\dot{H}^\rho(\T^2)}\leq C_I(|\rho|,h)h^{-1-\rho-\ell}\Sob{\phi}{L^1(\T^2)}.
	\end{align}
\item For $\rho\geq0$ and $\be\in(0,2)$ a non-integer we have
	\begin{align}\label{rho:betanonint}
		\Sob{J_hD^\be\phi}{\dot{H}^\rho(\T^2)}\leq \til{C}_Ih^{-(\rho+\be-\be')}\Sob{\phi}{\dot{H}^{\be'}(\T^2)},\quad0\leq\be'\leq\be.
	\end{align}
\end{enumerate}
\end{prop}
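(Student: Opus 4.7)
The plan is to prove all three sets of bounds by transferring the derivative operator from $\phi$ onto the bump function $\tilde\psi_\alpha$ inside the cell average
\[
	\widetilde{(D^\beta\phi)}_{Q_\alpha} = \frac{1}{\tilde a(Q_\alpha)}\langle D^\beta\phi,\tilde\psi_\alpha\rangle,
\]
and then invoking the appropriate $\dot{H}^\sigma$-bound on $\tilde\psi_\alpha$ from Proposition~\ref{prop:pou}. The structure of the proof is exactly the cell-based decomposition already exploited in the proof of Proposition~\ref{prop:Hk}; the differences between the three parts are (a) whether the derivative is local (integer) or non-local (fractional), and (b) whether one estimates the norm directly or by duality.

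For part $(i)$, since $D^\ell$ is genuinely local I would integrate by parts, obtaining $\widetilde{(D^\ell\phi)}_{Q_\alpha}=\frac{(-1)^{\ell-\ell'}}{\tilde a(Q_\alpha)}\langle D^{\ell'}\phi,D^{\ell-\ell'}\tilde\psi_\alpha\rangle$, and then repeat the argument in the proof of \req{rho:pos}: the almost-orthogonality $\|J_h\phi\|_{\dot{H}^\rho}^2\lesssim\sum_\alpha\tilde\phi_{Q_\alpha}^2\|\tilde\psi_\alpha\|_{\dot{H}^\rho}^2$, the bounds $\|\tilde\psi_\alpha\|_{\dot{H}^\rho}\lesssim h^{1-\rho}$ and $\|D^{\ell-\ell'}\tilde\psi_\alpha\|_{L^2}\lesssim h^{1-(\ell-\ell')}$ from Proposition~\ref{prop:pou}\,$(iv)$, and the finite overlap of the cubes $Q_\alpha(\epsilon)$, which turns $\sum_\alpha\|D^{\ell'}\phi\|_{L^2(Q_\alpha(\epsilon))}^2$ into $\|\phi\|_{\dot{H}^{\ell'}}^2$, deliver \req{rho:betaint}. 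For \req{rho:L1} I would replace Cauchy--Schwarz by H\"older's inequality and use the $L^\infty$-bound $\|D^{\ell-\ell'}\tilde\psi_\alpha\|_{L^\infty}\lesssim h^{-(\ell-\ell')}$ from Proposition~\ref{prop:pou}\,$(vi)$, collapsing the resulting $\ell^2$-sum over cells by subadditivity, $\sum_\alpha\|f\|_{L^1(Q_\alpha(\epsilon))}^2\le(\sum_\alpha\|f\|_{L^1(Q_\alpha(\epsilon))})^2\lesssim\|f\|_{L^1}^2$; the extra $h^{-1}$ factor in \req{rho:L1} is what remains from $\tilde a(Q_\alpha)^{-1}\sim h^{-2}$ after Hölder.

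For part $(ii)$ I would dualize. With $\|\chi\|_{\dot{H}^{|\rho|}}=1$ and $\rho\in(-2,0)$, self-adjointness of $J_h$ gives
\[
	|\langle J_hD^\beta\phi,\chi\rangle|
	= |\langle\phi,D^{*\beta}J_h\chi\rangle|
	\le \|\phi\|_{\dot{H}^{\beta'}}\|J_h\chi\|_{\dot{H}^{\beta-\beta'}},
\]
where the last step uses the trivial Fourier-side shift $\|D^{*\beta}g\|_{\dot{H}^\sigma}\le\|g\|_{\dot{H}^{\sigma+\beta}}$. Since $\beta-\beta'\ge 0$ and $|\rho|\in[0,2)$, the previously established \req{rho:pos} applies to $J_h\chi$ with source index $|\rho|$ and target index $\beta-\beta'$, yielding $\|J_h\chi\|_{\dot{H}^{\beta-\beta'}}\le C_I(|\rho|,h)\,h^{-(\rho+\beta-\beta')}$, which is \req{rho:betaint:neg}. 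For \req{rho:L1neg} I would instead bound $|\langle\phi,(-1)^\ell D^\ell J_h\chi\rangle|\le\|\phi\|_{L^1}\|D^\ell J_h\chi\|_{L^\infty}$, expand $D^\ell J_h\chi=\sum_\alpha\tilde\chi_{Q_\alpha}D^\ell\tilde\psi_\alpha$, and use pointwise finite overlap together with Proposition~\ref{prop:pou}\,$(vi)$ for $\|D^\ell\tilde\psi_\alpha\|_{L^\infty}$; the $C_I(|\rho|,h)$ prefactor enters through the estimate on $|\tilde\chi_{Q_\alpha}|$ obtained via the mean-zero reduction $\langle\chi,\tilde\psi_\alpha\rangle=\langle\chi,\bar\psi_\alpha\rangle$, Parseval, and Proposition~\ref{prop:pou}\,$(iv)$--$(v)$.

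The main obstacle is part $(iii)$, \req{rho:betanonint}, because for non-integer $\beta$ the operator $D^{\beta-\beta'}$ is non-local and the finite-overlap localization from part $(i)$ breaks down. My plan is to transfer $D^{\beta-\beta'}$ entirely onto $\tilde\psi_\alpha$ by a Plancherel-based pairing, using Cauchy--Schwarz in $\ell^2$ with Fourier weights $|k|^{\pm\beta'}$:
\[
	|\widetilde{(D^\beta\phi)}_{Q_\alpha}|
	\le \frac{1}{\tilde a(Q_\alpha)}\|\phi\|_{\dot{H}^{\beta'}}\|\tilde\psi_\alpha\|_{\dot{H}^{\beta-\beta'}}
	\lesssim h^{-1-(\beta-\beta')}\|\phi\|_{\dot{H}^{\beta'}},
\]
where the bound on $\|\tilde\psi_\alpha\|_{\dot{H}^{\beta-\beta'}}$ follows from Proposition~\ref{prop:pou}\,$(iv)$ since $\beta-\beta'\in[0,\beta)\subset(-1,2)$; crucially, only the $(iv)$-range is needed and no $(v)$-type loss is incurred. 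Because $\|\phi\|_{\dot{H}^{\beta'}}$ now appears globally rather than localized, the ensuing sum must be handled by the trivial counting $|\mathcal{J}|\sim(2\pi/h)^2$ in place of almost-orthogonality: combining this with $\|\tilde\psi_\alpha\|_{\dot{H}^\rho}\lesssim h^{1-\rho}$ gives
\[
	\|J_hD^\beta\phi\|_{\dot{H}^\rho}^2
	\lesssim |\mathcal{J}|\cdot h^{-2-2(\beta-\beta')}\|\phi\|_{\dot{H}^{\beta'}}^2\cdot h^{2-2\rho}
	\lesssim \Bigl(\tfrac{2\pi}{h}\Bigr)^{\!2} h^{-2(\rho+\beta-\beta')}\|\phi\|_{\dot{H}^{\beta'}}^2,
\]
which, upon taking square roots, is precisely \req{rho:betanonint} with the prefactor $\tilde C_I=2\pi/h$. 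The delicate point to verify is simply that the Plancherel manipulation transferring the fractional part of $D^{\beta-\beta'}$ onto $\tilde\psi_\alpha$ is valid; this is routine since the symbol $(ik_1)^{\lfloor\beta_1\rfloor}(ik_2)^{\lfloor\beta_2\rfloor}|k|^{\beta-\lfloor\beta\rfloor}$ is bounded in absolute value by $|k|^\beta$ and all functions involved are smooth.
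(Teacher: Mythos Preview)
Your proposal is correct and follows essentially the same approach as the paper: integration by parts plus Proposition~\ref{prop:pou}\,(iv) and finite overlap for \req{rho:betaint}; H\"older with Proposition~\ref{prop:pou}\,(vi) for \req{rho:L1}; duality combined with \req{rho:pos} for \req{rho:betaint:neg} and with a pointwise $L^\infty$-bound on $D^\ell J_h\chi$ for \req{rho:L1neg}; and for \req{rho:betanonint} the global Cauchy--Schwarz pairing $|\langle D^\beta\phi,\tilde\psi_\alpha\rangle|\le\|\phi\|_{\dot H^{\beta'}}\|\tilde\psi_\alpha\|_{\dot H^{\beta-\beta'}}$ followed by the trivial counting $|\mathcal J|\sim(2\pi/h)^2$. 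Your remark that only Proposition~\ref{prop:pou}\,(iv) is needed for part~(iii) (since $\beta-\beta'\in[0,2)$) is a sharper observation than the paper's citation of both (iv) and (v).
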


\begin{proof}
Let $\rho\geq0$.
By integrating by parts, Proposition \ref{prop:pou} (iv), and the Cauchy-Schwarz inequality we have
	\begin{align}
		\Sob{J_hD^\ell\phi}{\dot{H}^\rho(\T^2)}^2&\leq C\sum_\al|\til{(D^\ell\phi)}_{Q_\al}|^2\Sob{\til{\psi}_\al}{\dot{H}^\rho(\T^2)}^2\notag\\
					&\leq Ch^{-2-2\rho}\sum_\al|\lb D^{\ell'}\phi, D^{\ell-\ell'}\til{\psi}_\al\rb_{L^2(\T^2)}|^2\notag\\
					&\leq Ch^{-2\rho-2(\ell-\ell')}\sum_\al\Sob{D^{\ell'}\phi}{L^2(Q_\al(\eps))}^2\notag\\
					&\leq Ch^{-2(\rho+\ell-\ell')}\Sob{D^{\ell'}\phi}{L^2(\T^2)}^2,\notag
	\end{align}
which proves \req{rho:betaint}.

Similarly, estimating as before and applying Proposition \ref{prop:pou} (vi) (instead of (iv)) and H\"older's inequality (instead of Cauchy-Schwarz) we have
	\begin{align}
		\Sob{J_hD^\ell\phi}{\dot{H}^\rho(\T^2)}^2&\leq C\sum_\al|\til{(D^\ell\phi)}_{Q_\al}|^2\Sob{\til{\psi}_\al}{\dot{H}^\rho(\T^2)}^2\notag\\
				&\leq Ch^{-2-2\rho}\sum_\al|\lb D^{\ell'}\phi, D^{\ell-\ell'}\til{\psi}_\al\rb_{L^2(\T^2)}|^2\notag\\
				&\leq Ch^{-2-2\rho-2(\ell-\ell')}\sum_\al\Sob{D^{\ell'}\phi}{L^1(Q_\al(\eps))}^2\notag\\
				&\leq Ch^{-2-2(\rho+\ell-\ell')}\Sob{D^{\ell'}\phi}{L^1(\T^2)}^2.\notag
	\end{align}
Arguing as before, we ultimately arrive at \req{rho:L1}.


For $\rho\in(-2,0)$ and $\be'\in(-\infty,\be]$, we proceed by duality.  Indeed, let $\chi\in\dot{H}^{|\rho|}(\T^2)$ with $\Sob{\chi}{\dot{H}^{|\rho|}(\T^2)}=1$.  Since $J_h$ is self-adjoint, by Parseval's theorem we have
	\begin{align}\notag
		|\lb J_hD^\be\phi,\chi\rb_{L^2(\T^2)}|=|\lb\phi, D^{\be}J_h\chi\rb_{L^2(\T^2)}|.
	\end{align}
Then by Parseval's theorem, the fact that $\phi\in\mathcal{Z}$, the Cauchy-Schwarz inequality, the Poincar\'e inequality, and \req{rho:pos} of Proposition \ref{prop:Hk}, we have
	\begin{align}\notag
		|\lb J_hD^\be\phi,\chi\rb_{L^2(\T^2)}|&\leq C\Sob{\phi}{\dot{H}^{\be'}(\T^2)}\Sob{J_h\chi}{\dot{H}^{\be-\be'}(\T^2)}\notag\\
				&\leq C\Sob{\phi}{\dot{H}^{\be'}(\T^2)}\Sob{J_h\chi}{\dot{H}^{\be-\be'}(\T^2)}\notag\\
				&\leq C_I(|\rho|,h)h^{|\rho|-(\be-\be')}\Sob{\phi}{\dot{H}^{\be'}(\T^2)}\Sob{\chi}{\dot{H}^{|\rho|}(\T^2)},\notag
	\end{align}
which implies \req{rho:betaint:neg}.

On the other hand, to prove \req{rho:L1neg}, let $\be=k$ be an integer.  Since $J_h$ is self-adjoint, upon integrating by parts, then applying H\"older's inequality we obtain
	\begin{align}\notag
		|\lb J_hD^k\phi,\chi\rb_{L^2(\T^2)}|&=|\lb\phi,D^kJ_h\chi\rb_{L^2(\T^2)}|\notag\\
					& \leq C\Sob{\phi}{L^1(\T^2)}\Sob{D^kJ_h\chi}{L^\infty(\T^2)}.\notag
	\end{align}
Observe that
	\begin{align}\notag
		(D^kJ_h\chi)(x)=h^{-k}\sum_{\al}\til{\chi}_{Q_\al}(D^k\til{\Psi}_\al)(h^{-1}x).
	\end{align}
Now recall that $N=\sup_{x\in\T^2}\#\{\al\in\mathcal{J}:x\in Q_\al(\eps)\}<\infty$.  Let $\mathcal{J}'(x):=\{\al\in\mathcal{J}:x\in Q_\al(\eps)\}$.  Then it follows from Parseval's theorem, the Cauchy-Schwarz inequality, and Proposition \ref{prop:pou} (iv) through (vi) that
	\begin{align}
		|D^kJ_h\chi(x)|&\leq Ch^{-2-k}\sum_{\al\in\mathcal{J}'(x)}\Sob{\til{\psi}_\al}{\dot{H}^\rho(\T^2)}\Sob{\chi}{\dot{H}^{|\rho|}(\T^2)}\Sob{D^k\til{\Psi}_\al(h^{-1}\cdotp)}{L^\infty(\T^2)}\notag\\
						&\leq C_I(|\rho|,h)Nh^{-1-k}h^{-\rho}\Sob{\chi}{\dot{H}^\rho(\T^2)}\notag.
	\end{align}
Therefore
	\begin{align}\notag
		|\lb J_hD^k\phi,\chi\rb_{L^2(\T^2)}|\leq C_I(|\rho|,h)h^{-1-k-\rho}\Sob{\phi}{L^1(\T^2)}\Sob{\chi}{\dot{H}^\rho(\T^2)},
	\end{align}
which implies \req{rho:L1neg}, as desired.

Finally, we prove \req{rho:betanonint}.  Let $\be\in(0,2)$ a non-integer.  Then by Proposition \ref{prop:pou} (iv) and (v), integration by parts, the fact that $\Lam$ is self-adjoint, and the Cauchy-Schwarz inequality we have
	\begin{align}
		\Sob{J_hD^\be\phi}{\dot{H}^\rho(\T^2)}^2&\leq C\sum_\al|\til{D^\be\phi}_{Q_\al}|^2\Sob{\til{\psi}_\al}{\dot{H}^\rho(\T^2)}^2\notag\\
			&\leq Ch^{-2-\rho}\sum_{\al}|\lb D^{\be'}\phi,D^{\be-\be'}\til{\psi}_\al\rb_{L^2(\T^2)}|^2\notag\\
			&\leq Ch^{-2\rho-2(\be-\be')}\left(\frac{2\pi}{h}\right)^2\Sob{\phi}{\dot{H}^{\be'}(\T^2)}^2.\notag
	\end{align}
This completes the proof.
\end{proof}

\begin{rmk}\label{rmk:LPproj}
We point out that all of the above boundedness properties for $J_h$ hold also when $J_h$ is given by projection onto finitely many Fourier modes, in specific, when $J_h$ is given by the Littlewood-Paley projection.  The only difference is in the constants.   Indeed, one may notice above that this ``defect" between the spectral projection and the ``volume-elements" projection can be traced to the fact the operator, $\Lam^\be, \be\in(-2,2)$, is a non-local operator; although its input may be compactly supported, the output need not have compact support.  Generally speaking, the projection onto Fourier modes up to
wave-number $\ls 1/h$ satisfies convenient ``orthogonality" properties, as captured by the Bernstein inequalities, that are not enjoyed by projection onto local spatial averages.  The above boundedness properties then follow immediately from this inequality and the fact that differential operators will commute $J_h$ when it is given as this projection.  For this reason, we omit the details, but refer to \cite{jmt}, where the relevant estimates are carried out.
\end{rmk}

\bibliographystyle{plain}

\begin{thebibliography}{100}
\addcontentsline{toc}{chapter}{Bibliography}

\bibitem{alb:lopes:titi}
D.A.F.~Albanez, H.J.~Nussenzveig Lopes, and E.S.~Titi.
\newblock Continuous data assimilation for the three-dimensional {N}avier-{S}tokes-$\alpha$ model.
\newblock {\it Asymptotic Anal.}, 97(1--2), 139--164, (DOI) 10.3233/ASY-151351, 2016.

\bibitem{azouani:titi}
A.~Azouani and E.S.~Titi.
\newblock Feedback control of nonlinear dissipative systems by finite determining parameters-a reaction diffusion paradigm.
\newblock {\it Evol. Equ. Control Theory}, 3(4), 579--594, 2014.

\bibitem{azouani:olson:titi}
A.~Azouani, E. Olson, and E.S.~Titi.
\newblock Continuous data assimilation using general interpolant observables.
\newblock {\it J. Nonlinear Sci.}, 24(2), 277--304, 2014.

\bibitem{bcd}
H.~Bahouri, J.Y.~Chemin, R.~Danchin.
\newblock Fourier {A}nalysis and {N}onlinear {P}artial {D}ifferential {E}quations.
\newblock {\it Grundlehren der mathematischen Wissenschaften}, Springer-Verlag, 343, 2011.

\bibitem{bergemann:reich}
K.~Bergemann and S.~Reich.
\newblock An ensemble Kalman-Bucy filter for continuous data assimilation.
\newblock {\it Meteorl. Z.}, 21, 213--9, 2012.

\bibitem{bessaih:olson:titi} H.~Bessaih, E.~Olson, and E.S.~Titi.
  \newblock Continuous assimilation of data with stochastic noise.
  \newblock {\it Nonlinearity}, 28, 729--753, 2015.

\bibitem{blocher} J.~Blocher, {\it Chaotic Attractors and
  Synchronization using Time-Averaged Partial Observations
  of the Phase Space}, {\it Masters Thesis}, University of Nevada,
  Department of Mathematics and Statistics, 2016.

\bibitem{blocher18} J.~Blocher, V.R.~Martinez and E.~Olson,
Data Assimilation Using Noisy Time-Averaged Measurements,
{\it Physica D}, {376}, 49--59, 2018.

 \bibitem{blsz}
D.~Bloemker, K.J.H.~Law, A.M.~Stuart, and K.~Zygalakis.
\newblock Accuracy and stability of the continuous-time 3DVAR filter for the Navier-Stokes equation.
\newblock {\it Nonlinearity}, 26, 2193--2219, 2013.


\bibitem{caff:vass} L.~Caffarelli and A.~Vasseur.  \newblock Drift
  diffusion equations with fractional diffusion and the
  quasi-geostrophic equation.  \newblock {\it Annals of Math. },
  171(3), 1903--1930, 2010.

\bibitem{carrillo:ferreira} J.A.~Carrillo and C.F.~Ferreira.
  \newblock The asymptotic behaviour of subcritical dissipative
  quasi-geostrophic equations.  \newblock {\it Nonlinearity}, 21,
  1001--1018, 2008.

\bibitem{chj} J.~Charney, M.~Halem, and R.~Jastrow.  \newblock Use of
  incomplete historical data to infer the present state of the
  atmosphere.  \newblock {\it J. Atmos. Sci.} 26, 1160--1163, 1969.

\bibitem{chesk:dai:qgattract} A.~Cheskidov and M.~Dai.  \newblock The
  existence of a global attractor for the forced critical surface
  quasi-geostrophic equation in $L^2$,
	{\it J. Math. Fluid Mech.} 20(1), 213--225, 2018.

\bibitem{const:qg:spectra}
P.~Constantin.
\newblock Energy spectrum of quasigeostrophic turbulence.
\newblock {\it Phys. Rev. Lett.}, 89(18), 184501--4, 2002.

\bibitem{const:coti:vic}
P.~Constantin, M.~Coti-Zelati, and V.~Vicol.
\newblock Uniformly attracting limit sets for the critically dissipative {S}{Q}{G} equation.
\newblock {\it Nonlinearity}, 29, 298--318, 2016.

\bibitem{const:gh:vic} P.~Constantin, N.~Glatt-Holtz, and V. Vicol.
  \newblock Unique ergodicity for fractionally dissipated,
  stochastically forced 2D Euler equations.  \newblock
  {\textit{Comm. Math. Phys.}}, 819--857, 2014.

\bibitem{const:vic}
P.~Constantin and V.~Vicol.
\newblock Nonlinear maximum principles for dissipative linear nonlocal operators and applications.
\newblock {\it Geom. Funct. Anal.}, 22(5), 1289--1321, 2012.


\bibitem{const:wu:qgwksol} P.~Constantin and J.~Wu.  \newblock
  Behavior of solutions of 2{D} quasi-geostrophic equations.
  \newblock {\it {S}{I}{A}{M} J. Math Anal.}, 30(5), 937--948, 1999.

\bibitem{cmt} P.~Constantin, A.~Majda, E.~Tabak.  \newblock Formation
  of strong fronts in the 2-{D} quasigeostrophic thermal active
  scalar.  \newblock {\it Nonlinearity}, 7, 1495--1533, 1994.

\bibitem{ctv}
P.~Constantin, A.~Tarfulea, and V.~Vicol.
\newblock Long time dynamics of forced critical SQG.
\newblock {\it Commun. Math. Phys.},  335(1), 93--141, (DOI) 10.1007/s00220--014--2129--3m, 2014.

\bibitem{coti:zelati} M.~Coti-Zelati.  Long time behavior of
  subcritical {S}{Q}{G} in scale-invariant Sobolev spaces.
{\it J. Nonlinear Sci.} 28:1, 305--335, 2018.

\bibitem{cz:vic}
M.~Coti-Zelati and V.~Vicol.
\newblock On the global regularity for the supercritical SQG equation.
\newblock {\it Indiana Univ. Math. J.}, 65, 535--552, (DOI) 10.1512/iumj.2016.65.5807, 2016.

\bibitem{danchin:notes} R.~Danchin \newblock {Fourier Analysis Methods
  for PDEs} \newblock {Lecture notes}, November 14, 2005.

\bibitem{hdong}
H.~Dong.
\newblock {D}issipative quasi-geostrophic equations in critical {S}obolev
  spaces: smoothing effect and global well-posedness.
\newblock {\it Discrete Contin. Dyn. Syst. Series A}, 26(4):1197--1211,
  2010.

\bibitem{farhat:jolly:titi}
A.~Farhat, M.S.~Jolly, and E.S.~Titi.
\newblock Continuous data assimilation for the 2D B\'enard convection through velocity measurements alone.
\newblock {\it Phys. D}, 303, 59--66, 2015.

\bibitem{farhat:lunasin:titi1}
A.~Farhat, E.~Lunasin, and E.S.~Titi.
\newblock Abridged dynamic continuous data assimilation for the 2{D} {N}avier-{S}tokes equations.
\newblock {\it J. Math. Fluid Mech.}, 18:1, (DOI) 10.1007/s00021--015--0225--6, 2016.


\bibitem{farhat:lunasin:titi2}
A.~Farhat, E.~Lunasin, and E.S.~Titi.
\newblock Data Assimilation algorithm for 3D B\' enard convection in porous media employing only temperature measurements
\newblock {\it J. Math. Anal. Appl.}, 438(1), 492--506, 2016.

\bibitem{farhat:lunasin:titi3} A.~Farhat, E.~Lunasin, and E.S.~Titi. \newblock On the
Charney conjecture employing temperature measurements alone:
the paradigm of 3D planetary geostrophic model.
\newblock{\it Math. Clim. Weather Forecast}, 2:61--74, 2016.

\bibitem{foias:mondaini:titi}
C.~Foias, C.~Mondaini, and E.S.~Titi.
\newblock A discrete data assimiliation scheme for the solutions of the 2D Navier-Stokes equations and their statistics.
\newblock  {\it SIAM J. Appl. Dyn. Syst.}, 15(4), 2109–2142, 2016.

\bibitem{graf} L.~Grafakos.  \newblock Classical Harmonic Analysis,
  Second Edition.  \newblock {\it Springer Graduate Texts in
    Mathematics 249}, 2008.

\bibitem{held} I.M.~Held, R.T.~Pierrehumbert, S.T.~Garner, and
  K.L.~Swanson.  \newblock Surface quasi-geostrophic dynamics.
  \newblock {\it J. Fluid Mech.}, 282, 1--20, 1995.

\bibitem{ibdah:mondaini:titi} H.A.~Ibdah, C.F.~Mondaini, and E.S.~Titi.
\newblock Uniform in time error estimates for fully discrete numerical schemes of a data assimilation algorithm.
\newblock {\underline{arXiv:1805.01595v1}}, p. 1--41, May 4, 2018 (submitted).

\bibitem{jmt}
M.S.~Jolly, V.R.~Martinez, and E.S.~Titi.
\newblock A data assimilation algorithm for the subcritical surface quasi-geostrophic equation.
\newblock {\it Adv. Nonlinear Stud.}, 17(1), 167--192, 2017.

\bibitem{ju:qgattract} N.~Ju.  \newblock The {m}aximum {p}rinciple and
  the global {a}ttractor for the {d}issipative 2{D} {q}uasi-{g}
  eostrophic {e}quations.  \newblock {\it Comm. Math. Phys.}, 255,
  161--181, 2005.

\bibitem{kalnay} E.~Kalnay.  \newblock \textit{Atmospheric Modeling,
  Data Assimilation, and Predictability}.  \newblock Cambridge
  University Press, New York, 2003.

\bibitem{kato:ponce} T.~Kato and G.~Ponce.  \newblock Commutator
  estimates and the Euler and Navier-Stokes equation.  \newblock {\it
    Comm. Pure. Appl. Math}. 41(7), 891--907, 1988.

\bibitem{kato:ponce:vega} C.E.~Kenig, G.~Ponce, and L.~Vega.
  \newblock Well-posedness of the initial value problem for the
  Korteweg-de-Vries equation.  \newblock {\it J. Am. Math. Soc.},
  4(2), 323--347, 1991.

\bibitem{khouider:titi} B. Khouider and E.S. Titi.
\newblock An inviscid regularization for the surface quasi-geostrophic equation.
\newblock {\it Commun. Pure. Appl. Math.}, 61, 1331--1346, 2008.

\bibitem{kis:naz}
A.~Kiselev and F.~Nazarov.
\newblock Variation on a theme of {C}affarelli and {V}asseur.
\newblock {\it J. Math. Sci.}, 166(1), 31--39, 2010.

\bibitem{kis:naz:vol}
A.~Kiselev, F.~Nazarov, and A.~Volberg.
\newblock Global well-posedness for the critical 2{D} dissipative quasi-geostrophic equation.
\newblock {\it Invent. Math.}, 167, 445--453, 2007.

\bibitem{mark:titi:trab}
P.A.~Markowich, E.S.~Titi, and S.~Trabelsi.
\newblock Continuous data assimilation for the three-dimensional Brinkman-Forchheimer-extended Darcy model.
\newblock \textit{Nonlinearity}, 24(4), 1292--1328, (DOI) 10.1088/0951--7715/29/4/1292, 2016.

\bibitem{mondaini:titi}
C.F.~Mondaini and E.S.~Titi.
\newblock Uniform-in-time error estimates for the postprocessing Galerkin method applied
to a data assimilation algorithm.
\newblock {\it SIAM J. Numer. Anal.}, 56(1), 78--110, 2018.

\bibitem{pedlosky} J.~Pedlosky.  \newblock {\it Geophysical Fluid
  Dynamics}.  \newblock Springer-Verlag, New York, 1987.

\bibitem{resnick} S.G.~Resnick.  \newblock Dynamical problems in
  non-linear advective partial differential equations.  \newblock {\it
    PhD thesis}, pp. 1--86, August 1995.

\bibitem{runst:sick} T.~Runst and W.~Sickel.  \newblock Sobolev
  {S}paces of {F}ractional {O}rder, {N}emytskij {O}perators, and
  {N}onlinear {P}artial {D}ifferential {E}quations, \newblock {\it De
    Gruyter Series in Nonlinear Analysis and Applications}, Walter de
  Gruyter, 3, 1996.

\bibitem{wu:lowerbounds} J.~Wu.  \newblock Lower bounds for an
  integral involving fractional {L}aplacians and the generalized
  {N}avier- {S}tokes equations in {B}esov spaces.  \newblock {\it
    Comm. Math. Phys}, 263, 803--831, 2005.

\end{thebibliography}

\end{document}